\documentclass[11pt]{amsart}
\usepackage[utf8]{inputenc}

\usepackage[english]{babel}
\usepackage{enumitem}
\usepackage{tikz-cd}
\usepackage{mathtools}
\usepackage{mathrsfs}
\usepackage{amsthm}
\usepackage{amssymb}
\usepackage{amsmath}
\usepackage{wasysym}
\usepackage[hidelinks]{hyperref}
\usepackage{graphicx}
\graphicspath{{immagini/}}
\usepackage{float}
\usepackage{framed}
\usepackage{fancybox}
\usepackage[bottom=4cm]{geometry}
 \usepackage[all]{xy}
 \usepackage{cleveref}


 


\usepackage{comment}

\newcommand{\namedthm}[3]{\theoremstyle{plain}
   \newtheorem*{thm#1}{#1 Theorem}\begin{thm#1}[#2]#3\end{thm#1}}

    \usepackage{caption}
\usepackage{subcaption}

\usepackage{tikz}
\usetikzlibrary{
  knots,
  hobby,
  decorations.pathreplacing,
  shapes.geometric,
  calc
}

\tikzset{
  knot diagram/every strand/.append style={
    ultra thick,
    red
  },
  show curve controls/.style={
    postaction=decorate,
    decoration={show path construction,
      curveto code={
        \draw [blue, dashed]
        (\tikzinputsegmentfirst) -- (\tikzinputsegmentsupporta)
        node [at end, draw, solid, red, inner sep=2pt]{};
        \draw [blue, dashed]
        (\tikzinputsegmentsupportb) -- (\tikzinputsegmentlast)
        node [at start, draw, solid, red, inner sep=2pt]{}
        node [at end, fill, blue, ellipse, inner sep=2pt]{}
        ;
      }
    }
  },
  show curve endpoints/.style={
    postaction=decorate,
    decoration={show path construction,
      curveto code={
        \node [fill, blue, ellipse, inner sep=2pt] at (\tikzinputsegmentlast) {}
        ;
      }
    }
  }
}


\usetikzlibrary{patterns}

\newcommand{\N}{\mathbb{N}}
\newcommand{\R}{\mathbb{R}}

\DeclareMathOperator{\id}{id}

\mathchardef\hyp="2D

\newcommand{\F}{\mathcal{F}}

\newcommand{\E}{\mathcal{E}}
\newcommand{\Etil}{\widetilde{\mathcal{E}}}
\newcommand{\W}{\mathcal{W}}
\newcommand{\Wtil}{\widetilde{\mathcal{W}}}
\newcommand{\Rcal}{\mathcal{R}}
\newcommand{\Rtil}{\widetilde{\mathcal{R}}}

\newcommand{\Stil}{\widetilde{\mathcal{S}}}

\newcommand{\Ttil}{\widetilde{\mathcal{T}}}
\newcommand{\ax}{\mathbf{a}}
\newcommand{\bx}{\mathbf{b}}
\newcommand{\D}{\mathcal{D}}

\newcommand{\Ftil}{\mathcal{F}}
\DeclareMathOperator{\Fix}{Fix}


\newtheorem{thm}{Theorem}[section]
\newtheorem{cor}[thm]{Corollary}
\newtheorem{lem}[thm]{Lemma}
\newtheorem{prop}[thm]{Proposition}
\newtheorem*{teo}{Theorem}
\newtheorem*{cor*}{Corollary}

\theoremstyle{definition}
\newtheorem{dfn}[thm]{Definition}
\newtheorem{rmk}[thm]{Remark}
\newtheorem{nota}[thm]{Notation}

\newtheorem{conv}[thm]{Convention}

\newtheoremstyle{named}{}{}{\itshape}{}{\bfseries}{.}{ }{#1 \thmnote{#3}}
\theoremstyle{named}

\newtheoremstyle{namedtheo}{}{}{\itshape}{}{\bfseries}{.}{.5em}{#1 \thmnote{#3}}
\theoremstyle{namedtheo}

\title{The Alexander and Markov theorems for strongly involutive links}
\author{Alice Merz}
\address{Université de Lille, Lille 59000, France}
\email{alice.merz@univ-lille.fr}

\begin{document}
\begin{abstract} The Alexander theorem (1923) and the Markov theorem (1936) are two classical results in knot theory that show respectively that every link is the closure of a braid and that braids that have the same closure are related by a finite number of operations called Markov moves.
    This paper presents specialized versions of these two classical theorems for a class of links in $S^3$ preserved by an involution, that we call strongly involutive links. When connected, these links are known as strongly invertible knots, and have been extensively studied. We develop an equivariant closure map that, given two palindromic braids, produces a strongly involutive link. We demonstrate that this map is surjective up to equivalence of strongly involutive links. Furthermore, we establish that pairs of palindromic braids that have the same equivariant closure are related by an equivariant version of the original Markov moves. 
\end{abstract}
\maketitle

\section{Introduction}

An oriented link $L$ in $S^3$ is {invertible} if there exists an orientation preserving homeomorphism $\tau$ of $S^3$ that preserves $L$ setwise inverting its orientation. Notice that $\tau$ is allowed to exchange the components of the link. 
When $\tau$ is an involution of $S^3$ with fixed points we will call the pair $ (L,\tau)$ a \emph{strongly involutive link}. Furthermore, if $\tau$ preserves the components of the link, the pair is called a \emph{strongly invertible link} and we call a strongly invertible link with one component a \emph{strongly invertible knot} (see \Cref{fig: sikintro}).
\begin{figure}
    \centering
    \includegraphics[width=0.15\textwidth]{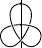}
    \caption{The knot in the picture is preserved setwise by the $\pi$-rotation around the vertical axis. Any orientation on the knot is reversed by this rotation.}
    \label{fig: sikintro}
\end{figure}
Observe that if $(L,\tau)$ is a strongly invertible link, then any possible orientation on $L$ is reversed by the involution $\tau$.

Strongly invertible knots have been a classical object of study (see for example \cite{Sakuma}), but in recent times there has been a strong new interest in the study of equivariant concordance for strongly invertible knots and their equivariant $4$-genus, for example in \cite{AlfieriBoyle}, \cite{BoyleIssa}, \cite{DaiMallickStoffregen}, \cite{DiPrisa}, \cite{LobbWatson}, \cite{MillerPowell}.

 In this paper we specialize two classical theorems in knot theory, namely the Alexander and Markov theorems, to strongly involutive links.
\newline

In this paper, unless otherwise stated, we will work in the PL category. In the non-equivariant setting it is a well-known fact that smooth and PL links are equivalent. In \Cref{app: equiv} we give a proof of the equivalence of these two settings in the equivariant case.
\newline

 Let $B_n$ denote the braid group in $n$ strands, and let $\mathcal{L}$ denote the set of all links up to isotopy.

In the classical case, the Alexander theorem \cite{Alexander} states that every link can be obtained as the closure of a braid or, in other words, that the closure map 
$$ \bigcup_{n \in \N} B_n \xrightarrow{\widehat{\cdot}} \mathcal{L}$$ is surjective.

This map is however far from being injective, as for any link $L$ there are infinitely many braids that have $L$ as their closure. A classical theorem of Markov \cite{Markov} defines two operations, called conjugation and stabilization, that generate an equivalence relation on the set of all braids so that the induced map on the quotient is a bijection, as shown in this commutative triangle:

$$ \begin{tikzcd}[row sep = large]
     \bigcup_{n \in \N} B_n \arrow[r] \arrow[d] & \mathcal{L} \\
     \left(\bigcup_{n \in \N} B_n\right)/ _\sim. \arrow[ru, "\simeq"'] &
\end{tikzcd}$$
\\
Before addressing the main results of this paper we state two different notions of equivalence on strongly involutive links. One, called Sakuma equivalence, is most commonly used in the literature about strongly involutive knots and links. The other one, called equivariant isotopy, is stronger than Sakuma equivalence, and will be frequently used throughout this paper. The positive resolution of the Smith conjecture together with a result by Lobb and Watson \cite{LobbWatson} will allow us to relate the two concepts.

\begin{dfn} \label{def: equivalencesintro}
We say that two PL strongly involutive links $(L_0,\tau_0)$ and $(L_1,\tau_1)$ are \emph{Sakuma equivalent} if there exists a PL homeomorphism $f:S^3 \to S^3$ such that $f(L_0)=L_1$ as oriented links and $\tau_0= f^{-1}\circ \tau_1 \circ f$. 
Two strongly involutive links $(L_0,\tau)$ and $(L_1,\tau)$ are \emph{equivariantly isotopic} if there exists a PL ambient isotopy of $S^3$ that sends $L_0$ to $L_1$ as oriented links and that commutes with $\tau$ at every time (notice that in this case the involution is the same). 
\end{dfn}
It is clear from the definition that two equivariantly isotopic strongly involutive links are also Sakuma equivalent. Viceversa, suppose that $(L_0,\tau_{0})$ and $(L_1,\tau_{1})$ are two strongly involutive links, Sakuma equivalent via the PL homeomorphism $f:S^{3} \to S^{3}$. By the positive resolution of the Smith conjecture (\cite{Waldhausen},\cite{MorganBass}), we know that any two orientation preserving involutions of $S^{3}$ with fixed points are conjugated and their fixed point set is an unknot. As a consequence we can suppose without loss of generality that $\tau_{0 }= \tau_{1}= \tau$ and we call the fixed point set $\ax$. Then $f$ commutes with $\tau$ and therefore fixes $\ax$ setwise. It follows from the proof of Proposition 2.4 in \cite{LobbWatson} that when $f$ preserves the orientation on $\ax$ the links are also equivariantly isotopic. Notice that in \cite{LobbWatson} actually work in the smooth settings but by the equivalence shown in \Cref{app: equiv} the same holds in the piecewise linear case.

In this paper we specialize the Alexander theorem to strongly involutive links (and therefore, to strongly invertible links), showing that every strongly involutive link is the closure of a braid with a particular symmetry.

Before stating the Equivariant Alexander Theorem, we want to introduce some terminology and explain what we mean by \emph{equivariant closure}.

Palindromic braids are, roughly speaking, those which are the same when read from left to right or from right to left when written in the Artin generators of the braid group
(see \Cref{def: palindromic} for a precise definition). Let $P_n \subset B_n$ denote the set of palindromic braids with $n$ strands. 

It is a consequence of a theorem of Deloup (\cite{Deloup}) that a palindromic braid $\alpha$ always admits a representative $a\subset [0,1] \times D^2$ in its isotopy class that is fixed by the involution $\rho$ that consists of a $\pi$-rotation around the axis $ \{(1/2,t,0)\in [0,1] \times D^2 \, |\, |t|\le 1$\} (see \Cref{fig:palindromicnaive}).

\begin{figure}
    \centering
    \includegraphics[width= 0.25 \textwidth]{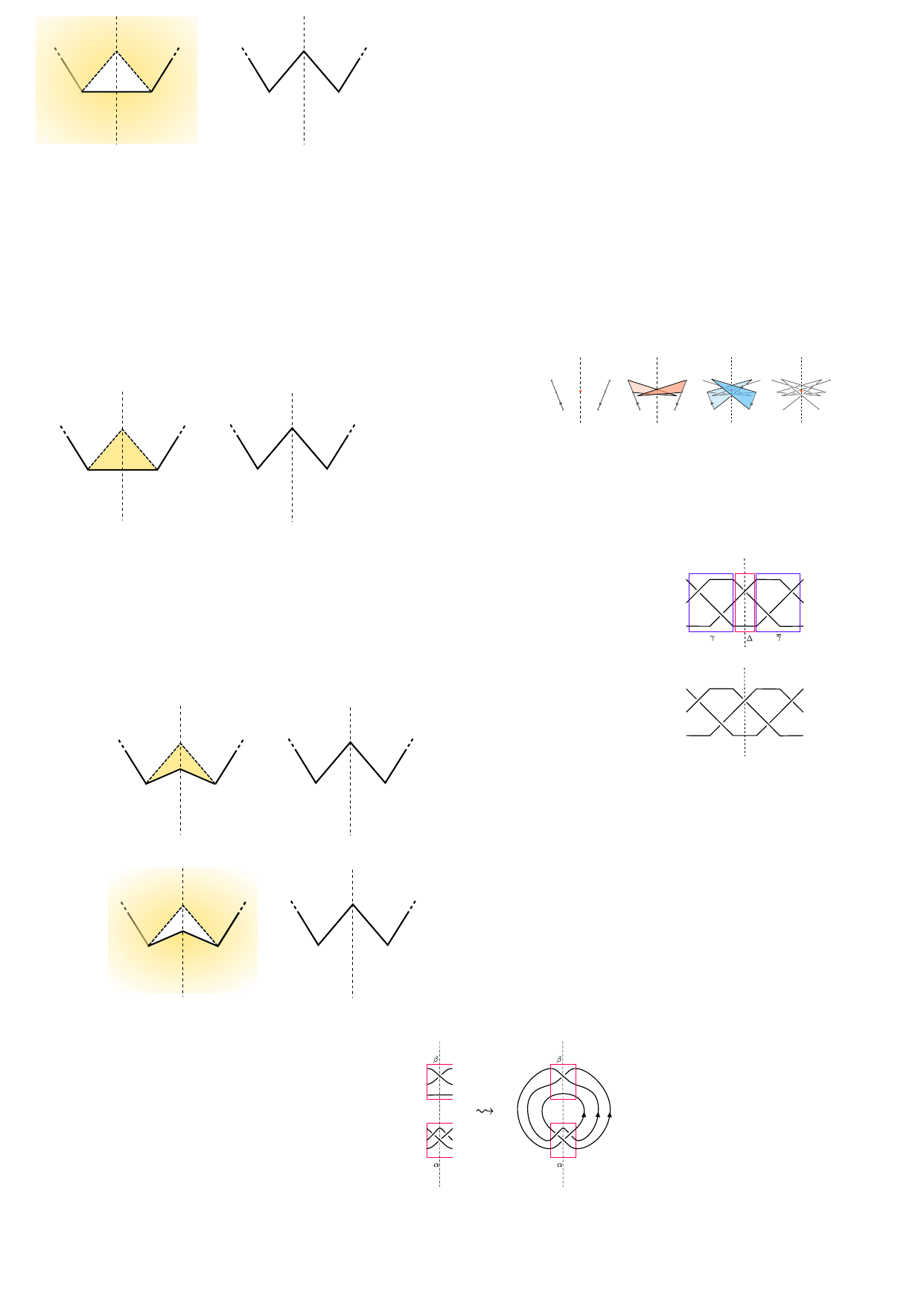}
    \caption{A palindromic braid admits a representative in its isotopy class that is fixed by the $\pi$-rotation around the vertical axis in the picture.}
    \label{fig:palindromicnaive}
\end{figure}
We show that any two such representatives of the same braid are equivalent, as stated in the following theorem:
\begin{teo}[see \Cref{thm: uniqueness}]
    Let $\alpha$ be a palindromic braid and let $a,a'\subset I \times D^2$ be two representatives such that $\rho(a)=a$ and $\rho(a')=a'$, where $I$ denotes the interval $[0,1]$. Then there exists a PL homeomorphism $H:I \times D^2 \to I \times D^2$, such that $H(a)=a'$, $H \circ \rho = \rho \circ H$ and $H_{|\partial (I \times D^2)}= \id_{\partial (I \times D^2)}$.
\end{teo}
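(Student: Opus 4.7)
The strategy is to combine the classical (non-equivariant) PL isotopy classification of braids in $I \times D^2$ with a symmetrization argument that exploits the involution $\rho$. By the classical theorem that two PL representatives of the same braid are related by an ambient PL isotopy rel $\partial(I \times D^2)$, applied to $a$ and $a'$, we obtain a non-equivariant PL homeomorphism $F : I \times D^2 \to I \times D^2$ with $F(a)=a'$ and $F|_{\partial(I \times D^2)} = \id$. The task then reduces to upgrading $F$ to a $\rho$-equivariant homeomorphism.

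I would decompose $F$ (or an ambient isotopy realizing it) as a finite composition of PL homeomorphisms $\phi_1,\dots,\phi_N$, each supported in a small PL ball $B_i \subset I \times D^2$, obtained by choosing a sufficiently fine PL triangulation of $I \times D^2$ adapted to $F$. For each $\phi_i$ with $B_i \cap \rho(B_i) = \emptyset$, I would replace $\phi_i$ by the $\rho$-equivariant homeomorphism $\tilde\phi_i$ that equals $\phi_i$ on $B_i$, equals $\rho \circ \phi_i \circ \rho$ on $\rho(B_i)$, and is the identity elsewhere. For each $\phi_i$ whose support $B_i$ meets $\rho(B_i)$ (necessarily close to the axis $\ax$), I would enlarge $B_i$ to a $\rho$-invariant PL ball $\widetilde{B}_i$ containing both $B_i$ and $\rho(B_i)$, and within $\widetilde{B}_i$ construct an equivariant PL homeomorphism replicating the effect of $\phi_i$ on the braid. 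This last step uses that, near $\ax$, the involution $\rho$ admits a standard linear model via the equivariant PL triangulation theorem, so local $\rho$-invariant tangle types have a well-understood equivariant classification. Concatenating the symmetrized maps produces a $\rho$-equivariant PL homeomorphism $H$ that is the identity on $\partial(I \times D^2)$.

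The main obstacle is verifying that the symmetrized composition still sends $a$ to $a'$, since the $\tilde\phi_i$ may differ from the originals and thus track the braid differently at intermediate stages. This is handled by an equivariant PL isotopy extension argument, using crucially that both endpoints $a$ and $a'$ are $\rho$-invariant, together with a careful normal-form analysis of $\rho$-invariant tangles in a bicollar of the middle disk $\{1/2\} \times D^2$. An alternative, more conceptual route would be to pass to the orbit space $(I \times D^2)/\rho$, which is a PL $3$-ball with the image of $\ax$ as a distinguished arc; the tangles $\pi(a)$ and $\pi(a')$ descend there, and an ambient PL isotopy between them downstairs lifts, via the branched double cover, to a $\rho$-equivariant ambient isotopy upstairs. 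Either approach must confront the same essential difficulty: controlling the equivariant behavior of the isotopy near $\ax$, where strands of the braid may cross or approach the fixed set.
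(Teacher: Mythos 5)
There is a genuine gap in both routes you sketch, and you yourself flag it without resolving it. In the symmetrization route, suppose $F=\phi_N\circ\cdots\circ\phi_1$ with each $\phi_i$ supported in a small ball $B_i$. Since $a$ and $a'$ are both $\rho$-invariant, the intermediate braids $a_k:=\phi_k\circ\cdots\circ\phi_1(a)$ are in general \emph{not} $\rho$-invariant: a single move $\phi_i$ supported in a ball $B_i$ disjoint from $\rho(B_i)$ cannot take one $\rho$-invariant braid to another unless it fixes the braid entirely. Consequently, after symmetrizing, $\tilde{\phi}_i$ also acts on $a_{i-1}\cap\rho(B_i)$, which $\phi_i$ does not, so $\tilde{\phi}_N\circ\cdots\circ\tilde{\phi}_1(a)$ need not equal $a'$. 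The ``equivariant PL isotopy extension argument'' you invoke is precisely the missing piece; it is not a standard black box, and as stated it does not address how to repair the mistracking. In the orbit-space route, the map $F$ does not commute with $\rho$, so it does not descend to $(I\times D^2)/\rho$. Hence you do not get for free that $\pi(a)$ and $\pi(a')$ are ambient isotopic in the quotient rel the branch arc and rel boundary, and establishing that they are is essentially equivalent to the original statement: one would need to argue that a nonequivariant isotopy of the lift implies an appropriate isotopy in the branched-cover quotient, which is the crux of the matter, not a corollary.

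For contrast, the paper's proof proceeds by a different mechanism: it first constructs a concrete $\rho$-equivariant PL homeomorphism $\phi$ between the tubular neighbourhoods $N_a$ and $N_{a'}$ directly from the parametrizations, extends $\phi$ (nonequivariantly) over the complementary genus-$n$ handlebody $X_a$ using the fact that meridians go to meridians, and then fixes up equivariance by conjugating: it compares the two involutions $\rho$ and $\rho'=\phi^{-1}\rho\phi$ on $X_a$, which agree on $\partial X_a$, and uses the structure theory of PL involutions on handlebodies (McCullough--Miller--Zimmermann, Pantaleoni--Piergallini) together with \Cref{prop: handlebody} to replace the conjugating homeomorphism $f$ by an equivariant one $\overline{f}$ with the same boundary values. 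The composition $H=\phi\circ(\overline{f}^{-1}\circ f)$ then has all the required properties. The hard step that you are missing is exactly the content of \Cref{prop: handlebody}: that a homeomorphism of a handlebody which is equivariant on the boundary can be replaced, rel boundary, by one that is equivariant on all of the handlebody. Both of your routes would ultimately need something of this flavour, but neither supplies it.
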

Let $\mathcal{SIL}$ denote the set of strongly involutive links up to Sakuma equivalence.
We call \emph{equivariant closure map} the function
\begin{figure}
    \centering
    \includegraphics[width= 0.45 \textwidth]{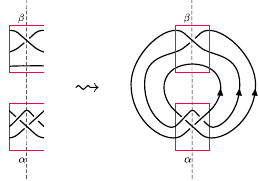}
    \caption{The equivariant closure map. The involution is given by a $\pi$-rotation around the dotted axis.}
    \label{fig:equivariantclosure}
\end{figure}
$$ \bigcup_{n \in \N} P_n \times P_n \to \mathcal{SIL}$$
that takes in input two palindromic braids $\alpha$ and $\beta$ and returns a strongly involutive link whose underlying link is $\widehat{\alpha\beta}$
and whose involution is depicted in \Cref{fig:equivariantclosure}. We refer to \Cref{sec: palindromi} for a precise definition of equivariant closure.
This map is well-defined as a consequence of \Cref{thm: uniqueness}.
We are finally ready to state the Equivariant Alexander Theorem.
\namedthm{Equivariant Alexander}{see \Cref{thm: Alexander}}
{The equivariant closure map is surjective.}

In other words, every strongly involutive link is Sakuma equivalent to the equivariant closure of two palindromic braids.

As in the classical case, the equivariant closure map is far from being injective and therefore we prove an equivariant version of Markov's theorem. We define a finite set of operations on $\bigcup_{n \in \N}P_{n} \times P_{n}$ that generate an equivalence relation so that the map induced by equivariant closure on the quotient is a bijection, as shown in the following commutative diagram:

$$ \begin{tikzcd}[row sep = large]
    \bigcup_{n \in \N} P_n \times P_{n} \arrow[r] \arrow[d] & \mathcal{SIL} \\
    \left(\bigcup_{n \in \N} P_n \times P_{n}\right) /_\sim. \arrow[ru, "\simeq"'] &
\end{tikzcd}$$
More precisely, we prove the following theorem:

\namedthm{Equivariant Markov}{see \Cref{thm: eqmarkov}}{
    Let $(\widehat{\alpha\beta},\tau)$ and $(\widehat{\gamma\delta},\tau)$ be two strongly involutive links. Then they are Sakuma equivalent if and only if the pairs $(\alpha, \beta)$ and $(\gamma, \delta)$ are related by a finite sequence of the following operations and their inverses:
    \begin{itemize}
        \item equivariant conjugation, i.e. changing $(\alpha,\beta)$ with $ (\varepsilon\alpha \overline{\varepsilon}, \overline{\varepsilon}^{-1}\beta {\varepsilon}^{-1})$ for any braid $\varepsilon$. Here $\overline{\varepsilon}$ indicates the braid obtained by ``reading from right to left" a word for $\varepsilon$ in the Artin generators of the braid group (see \Cref{def: palindromic} for details);
        \item stabilization on a fixed point;
        \item stabilization on a fixed point through $\infty$;
        \item double stabilization;
        \item exchanging the components of a pair, i.e. substituting the pair $(\alpha,\beta)$ with $(\beta,\alpha)$.
    \end{itemize}
} 

To see the effects of the operations introduced in the previous theorem, see Figures \ref{fig:eqconjugateintro}, \ref{fig:stabfixintro}, \ref{fig:stabfixinftyintro} and \ref{fig:stabdoubleintro}. For the precise definitions of said moves we refer to \Cref{sec: Markov}.

\begin{figure}
    \centering
    \includegraphics[width=0.2 \textwidth]{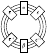}
    \caption{The effect of equivariant conjugation on a strongly involutive link. }
    \label{fig:eqconjugateintro}
\end{figure}
\begin{figure}
        \centering
        \includegraphics[width=0.4 \textwidth]{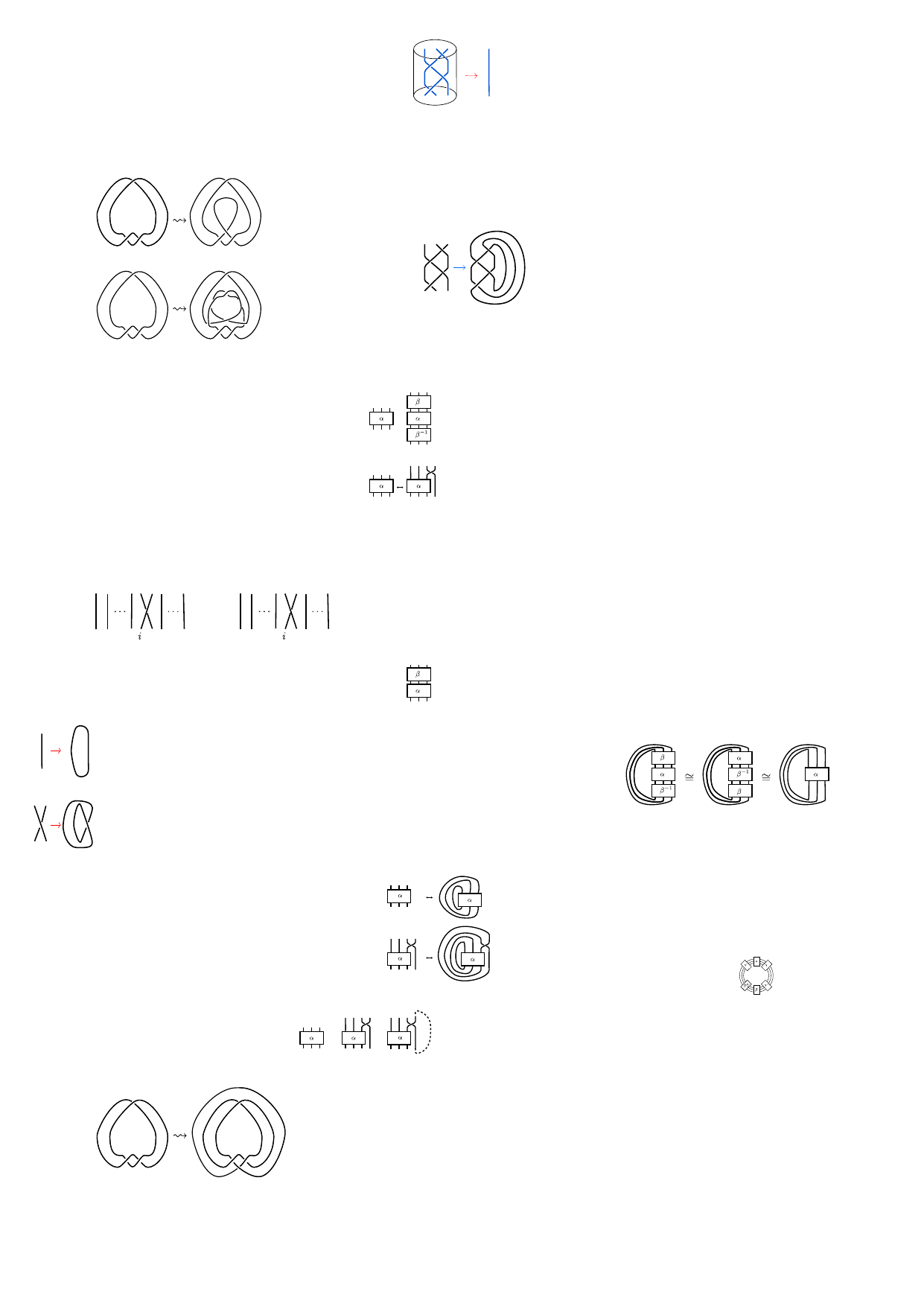}
        \caption{An example of stabilization on a fixed point.}
        \label{fig:stabfixintro}
    \end{figure}
    \begin{figure}
        \centering
        \includegraphics[width=0.4 \textwidth]{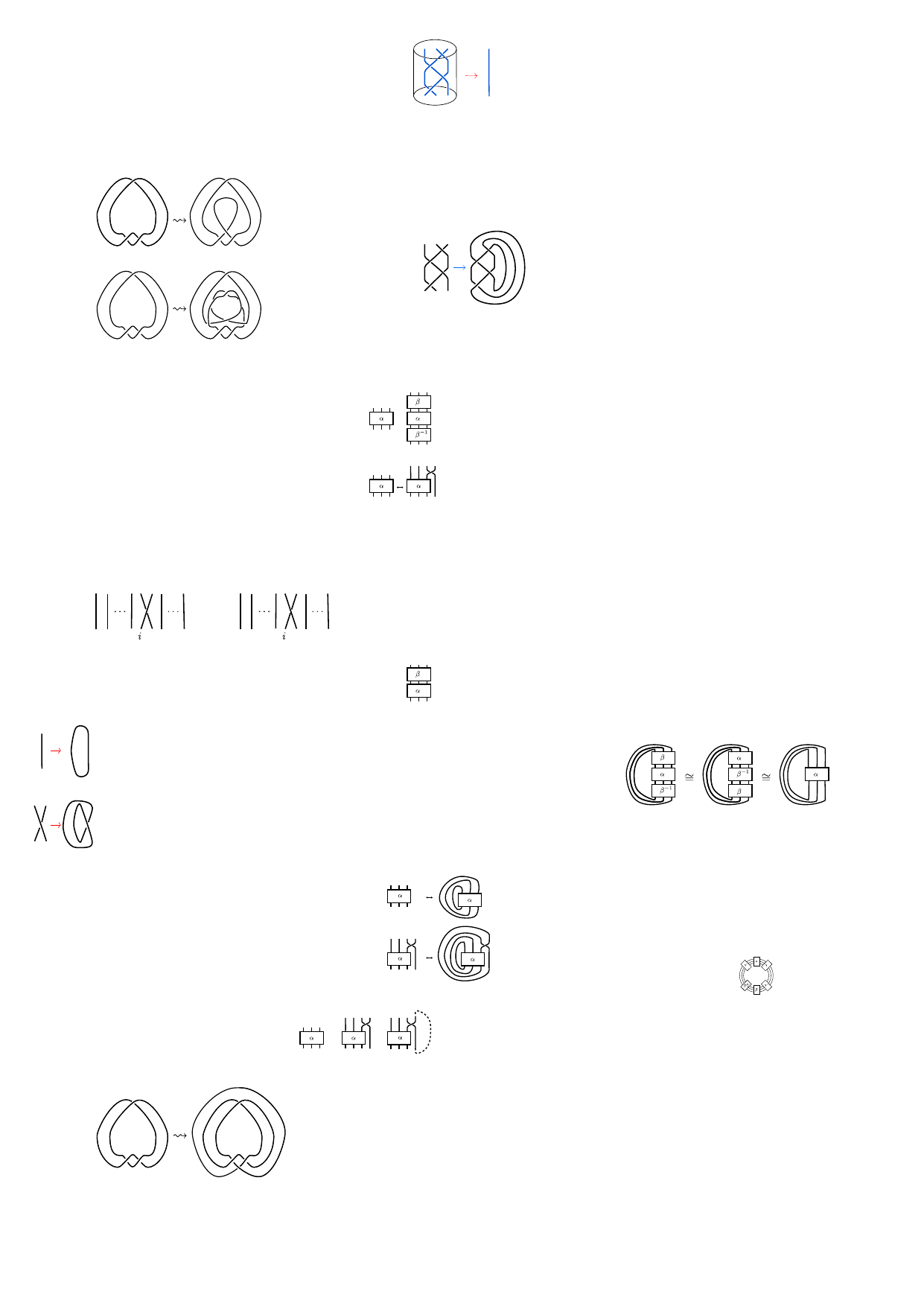}
        \caption{An example of stabilization on a fixed point through $\infty$.}
        \label{fig:stabfixinftyintro}
    \end{figure}

    \begin{figure}
        \centering
        \includegraphics[width=0.4 \textwidth]{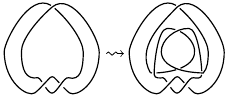}
        \caption{An example of double stabilization.}
        \label{fig:stabdoubleintro}
    \end{figure}

\textbf{Outline of the paper:}
This paper is organised as follows: 
in \Cref{sec: firstdef} we introduce some of the necessary notation and we establish some preliminary results.

In \Cref{sec: palindromi} we present palindromic braids. We show that $\rho$-invariant representatives of a palindromic braid are equivalent (\Cref{thm: uniqueness}) and, as a consequence, that the equivariant closure map is well-defined.

In \Cref{sec: Alex} we prove the Equivariant Alexander Theorem.

Finally, in \Cref{sec: Markov} we prove the Equivariant Markov Theorem. The proof is divided in four steps and is an adaptation to the equivariant case of the proof by Birman in \cite{Birman}.

In \Cref{app: equiv} we provide a proof of the equivalence of the piecewise linear and smooth setting.

In \Cref{app: examples} we provide some low crossing number strongly invertible knots in equivariant braid closure form.
\newline

 After completing this project, I encountered the papers \cite{CouturePerron} and \cite{Couture}, that come to very similar conclusions as the ones in this paper, although their results are set in a quite different framework, coming from the study of divides (introduced in \cite{Acampo}).
In fact in \cite{CouturePerron} the authors show that every link coming from an ordered Morse signed divide is the closure of a braid that is the composition of two palindromic braids, while \cite{Couture} shows that every strongly invertible link comes from an ordered Morse signed divide. Moreover in \cite{Couture} it is shown that two strongly invertible links are equivariantly equivalent if and only if the corresponding ordered Morse signed divides are related by a set of moves that resemble the equivariant Markov moves. However it is not obvious (although expected) that the involution of a strongly invertible link and the involution on the closure of the associated braid found in \cite{CouturePerron} give rise to equivalent strongly invertible links.

\textbf{Acknowledgements.} This paper is part of the author's PhD thesis. The author would like to thank her advisor Paolo Lisca for his support and the comments on the first version of this paper. 
Moreover, she would also like to thank the referees of her PhD thesis, Keegan Boyle and David Cimasoni, for their much appreciated feedback and corrections, which significantly contributed to enhancing the clarity and quality of this paper. The author is affiliated with INdAM-GNSAGA. The author acknowledges the MIUR Excellence Department Project awarded to the Department of Mathematics, University of Pisa, CUP I57G22000700001.


\section{Equivariant triangle moves and PL isotopy of strongly involutive links} \label{sec: firstdef}

Let $L$ be an oriented link in $S^3$. 
Recall that all links are supposed to be PL. In this section we introduce some operations on strongly involutive links called equivariant triangle moves. We show that two strongly involutive links are isotopic if and only if they are related by a finite number of admissible equivariant triangle moves. Finally we prove \Cref{lem: convex} that will help us establish when an equivariant triangle move is admissible.
\newline

We think of links as a set of vertices in $\R^3$ connected by straight edges, in particular we allow three or more consecutive vertices to be aligned.


We fix the following notation:

\begin{nota}
We think of $S^3$ as $\R^3 \cup \{\infty\}$. Given a finite set of points $p_1,\ldots,p_m \in \R^3$ we denote by $[p_1,\ldots,p_m]$ the convex hull in $\R^3$ of these points. If $m=2$, $[p_1,p_2]$ denotes a segment and $p_1p_2$ denotes the oriented segment from $p_1$ to $p_2$.
\end{nota}

\begin{figure}
    \centering
    \includegraphics[width= 0.8 \textwidth]{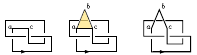}
    \caption{The move $\E_{a,c}^b$.}
    \label{fig:Emove}
\end{figure}

\begin{dfn} \label{def: admissible}
Let $a,c$ be consecutive vertices in $L$, and let $b$ be another point in $\R^3$. When $L'=(L \setminus [a,c]) \cup [a,b]\cup[b,c]$ is a link, we call it $\mathcal{E}^b_{a,c}(L)$ (see \Cref{fig:Emove}). \\
When $[a,b,c]\cap L= [a,c]$ the move $\E_{a,c}^b$ is said to be \emph{admissible}. 
\end{dfn}
Observe that when a move $ \E_{a,c}^b$ is admissible then the links $L$ and $\E_{a,c}^b(L)$ are PL isotopic. 
Viceversa it is a well-known fact that every two links are isotopic if and only if they are obtained from each other by a finite sequence of admissible $\mathcal{E}$ moves and their inverses.

Now we would like to state a similar result for strongly involutive links. Notice however that while in the classical setting isotopies of links can be always assumed to avoid the point at infinity, in the equivariant setting this is not possible. 

As a consequence we introduce as well the $\E$ moves that go through $\infty$:

\begin{conv}
    Let $a,c$ be consecutive vertices in $L$ and let $b$ be a third unaligned point in $\R^3\setminus L$. Denote by $\alpha$ the plane in $\R^3$ that contains the points $a,b,c$. When $\alpha \cap (L \setminus [a,c])$ is contained in the interior of $[a,b,c]$ we add to the set of admissible moves in \Cref{def: admissible} the move $\E_{a,c}^b$. We will refer to this move as an $\E$ move going through $\infty$. For an example in the equivariant setting, see \Cref{fig:Einfinity}.
\end{conv}

Let $(L,\tau)$ be a strongly involutive link. By the positive resolution of the Smith conjecture (\cite{Waldhausen},\cite{MorganBass}), up to conjugation we can and will suppose that $\tau|_{\R^3}$ is the $\pi$-rotation around a fixed axis, and $\tau(\infty)=\infty$. We will denote the fixed point set of $\tau$ by $\ax$.
 Notice that $\ax$ is an unknotted circle in $S^3$.

Notice that up to equivariant isotopy (see \Cref{def: equivalencesintro}) we can always suppose that a strongly involutive link $L$ is contained in $\R^3$.
We now define an equivariant version of the $\E$ move.

\begin{dfn}
    \label{def: admissibleEtil}
Let $a,c$ be consecutive vertices on a strongly involutive link $(L,\tau)$. Let $b \in \R^3$ be another point. We define the move $\Etil_{a,c}^b \coloneqq  \E_{\tau(a),\tau(c)}^{\tau(b)}\circ \E_{a,c}^b$ whenever the right-hand side is well-defined and the moves $\E_{\tau(a),\tau(c)}^{\tau(b)}$ and $ \E_{a,c}^b $ do not go through $\infty$. \\
When $\E_{a,c}^b$ is admissible on $L$, and $\E_{\tau(a),\tau(c)}^{\tau(b)}$ is admissible on $ \E_{a,c}^b(L)$ the move $\Etil_{a,c}^b$ is said to be \emph{admissible}. 
\end{dfn}

\begin{rmk} \label{rmk: asse}
Given a strongly involutive link $(L,\tau)$, remark that $(\Etil_{a,c}^b(L),\tau )$ is also strongly involutive. \\
   It is not hard to see that $\Etil_{a,c}^b $ is admissible if and only if $\E_{a,c}^b $ is admissible and $[a,b,c]\cap \tau([a,b,c]) = [a,c]\cap \tau([a,c])$. 
   The set on the right-hand side is either empty (\Cref{fig:Etilmove}) or is equal to one of the endpoints of $[a,c]$ when this point lies in $\ax$.
\end{rmk}

\begin{lem} \label{lem: equivalence}
Two strongly involutive links $(L,\tau)$, $(L',\tau)$ are equivariantly isotopic if and only if $L'$ is obtained from $L$ by a finite sequence of moves of the following types (and their inverses):
\begin{enumerate}
    \item \label{mossa uno}admissible $\widetilde{\mathcal{E}}_{a,c}^b$ moves (\Cref{fig:Etilmove});
    \item \label{mossa due}admissible $\E_{a,\tau(a)}^b$ moves with $b=\tau(b)$ (\Cref{fig:Enotinfinity}, \Cref{fig:Einfinity}).
\end{enumerate}
\end{lem}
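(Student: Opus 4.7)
The backward direction is immediate: any admissible move of type (1) or (2) can be realized by an equivariant PL ambient isotopy of $S^3$ supported in a regular neighborhood of the triangle(s) involved (pushing $[a,c]$ across $[a,b]\cup[b,c]$ linearly, together with its $\tau$-image in case (1)), and composing such supported isotopies will yield the required equivariant isotopy from $L$ to $L'$.

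For the forward direction, let $H_t\colon S^3 \to S^3$ be an equivariant PL ambient isotopy with $H_0 = \id$ and $H_1(L) = L'$. My plan is to adapt the classical non-equivariant proof (decomposing a PL link isotopy into admissible $\E$ moves) and carry it out $\tau$-symmetrically. I would first subdivide time and space finely enough that the resulting sequence of elementary moves $L = L_0 \to L_1 \to \cdots \to L_N = L'$ can be arranged so that each elementary triangle $[a,b,c]$ either satisfies $[a,b,c] \cap \tau([a,b,c]) \subseteq [a,c] \cap \tau([a,c])$---in which case $\E_{a,c}^b$ pairs with $\E_{\tau(a),\tau(c)}^{\tau(b)}$ to form an admissible $\Etil_{a,c}^b$ of type (1), as noted in \Cref{rmk: asse}---or is $\tau$-invariant, in which case one is forced to have $c = \tau(a)$ and $b = \tau(b) \in \ax$, giving a move of type (2).

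The main obstacle will be this subdivision step: I need to show that a generic $\E$ move produced by a non-equivariant decomposition can always be refined into sub-moves each fitting the dichotomy above. For triangles lying in $\R^3 \setminus \ax$ I would use that $\tau$ acts freely off the axis, so every point has a $\tau$-disjoint neighborhood; a sufficiently fine (e.g.\ barycentric) subdivision of $[a,b,c]$ then produces small sub-triangles that are disjoint from their $\tau$-images and hence realize type (1) moves. Near $\ax$ one must argue separately, handling an edge lying along $\ax$ and a triangle merely meeting $\ax$ at a single interior point by combining small type (1) moves with a type (2) move centered on the axis intersection. Moves that go through $\infty$ are handled identically after a coordinate change sending $\infty$ to a finite point. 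Concatenating the resulting equivariant elementary moves will then realize the whole equivariant isotopy $H_t$ and complete the forward direction.
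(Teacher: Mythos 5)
Your backward direction matches the paper and is fine. For the forward direction, however, you are taking a genuinely different route from the paper, and it has gaps that need to be closed.

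The paper adapts Burde--Zieschang's Proposition~1.10: it locates a point $p\in\ax$ whose trace under $H$ avoids the trace of $L$, composes $H_1$ with a small equivariant isotopy $G$ so that $G_1\circ H_1$ fixes a neighbourhood $U$ of $p$ pointwise, shrinks $L$ into $U$ by a $\tau$-equivariant homothety $T$ centred on $\ax\cap U$, tessellates the trace of the homothety isotopy equivariantly (which is straightforward because each edge sweeps a planar cone, and edges pair off under $\tau$), and then pushes this whole finite sequence of equivariant triangle moves forward through the PL map $G_1\circ H_1$. Because $G_1\circ H_1$ commutes with $\tau$ and fixes $T(L)$, the pushed-forward moves are automatically equivariant pairs (possibly through $\infty$), and you land on $L'$. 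The general case is reduced to this by subdividing time so that on each subinterval such a point $p_i$ exists.

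You instead propose to triangulate the isotopy trace directly and refine until every triangle is either disjoint from its $\tau$-image or $\tau$-invariant. Two things are left unjustified. First, it is not enough that each small triangle $T$ individually satisfies $T\cap\tau(T)\subseteq[a,c]\cap\tau([a,c])$: to assemble $\E_{a,c}^b$ and $\E_{\tau(a),\tau(c)}^{\tau(b)}$ into an admissible $\Etil$ move, the two must be applied \emph{consecutively}, with $\E_{\tau(a),\tau(c)}^{\tau(b)}$ admissible on $\E_{a,c}^b(L)$. After a barycentric subdivision the sub-triangles of $[a,b,c]$ and of $\tau([a,b,c])$ share edges with neighbouring sub-triangles, so the order in which you process them matters, and you must exhibit a $\tau$-equivariant processing order in which each off-axis pair $\{T,\tau(T)\}$ can be done back-to-back without obstruction from the partially modified link. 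This ordering issue is exactly what the homothety trick sidesteps: the cone tessellation has an obvious sweeping order that is automatically $\tau$-symmetric. Second, your treatment of $\infty$ is too quick. A coordinate change sending $\infty$ to a finite point must be chosen to commute with $\tau$ (i.e.\ permute $\ax$), and even then the notion of ``not going through $\infty$'' in \Cref{def: admissibleEtil} is not preserved by such a change; the paper instead obtains $\infty$-going moves for free as pushforwards of bounded moves under $G_1\circ H_1$ (only in case (2), consistent with the definitions). Your approach can likely be repaired, but the subdivision-and-ordering argument needs to be made explicit and is where the real work lies; as written, it is incomplete.
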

We will sometimes refer to these moves as equivariant triangle moves.

\begin{figure}
    \centering
    \includegraphics[width = 0.6 \textwidth]{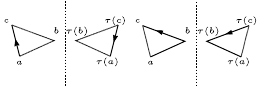}
    \caption{The move $\Etil_{a,c}^b$.}
    \label{fig:Etilmove}
\end{figure}

\begin{figure}
    \centering
    \includegraphics[width= 0.5 \textwidth]{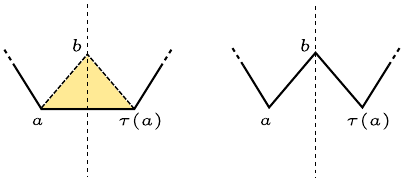}
    \caption{The move $\E_{a,\tau(a)}^b$. Notice that the yellow triangle is $\tau$-invariant.}
    \label{fig:Enotinfinity}
\end{figure}

\begin{figure}
    \centering
    \includegraphics[width= 0.5 \textwidth]{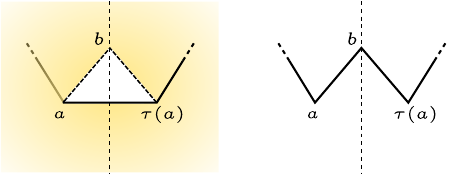}
    \caption{The move $\E_{a,\tau(a)}^b$ going through $\infty$.}
    \label{fig:Einfinity}
\end{figure}

The following proof is an adaptation of the proof of Proposition 1.10 in the book of Burde and Zieschang \cite{BurdeZieschang}.

\begin{proof}
It is easy to see that if $L$ and $L'$ are related by a finite sequence of equivariant triangle moves then there is an equivariant isotopy that sends $L$ to $L'$. 
 Viceversa, suppose that $H:S^3 \times I \to S^3$ is a (PL) $\tau$-equivariant ambient isotopy such that $H_0 = \id_{S^3}$ and $H_1(L)=L'.$
 Recall that $\ax$ is the fixed point set of $\tau$.
 First of all we prove the statement in the case there exists a point $p$ in $\ax \cap \R^3$ such that $H(p,[0,1]) \cap H(L,[0,1]) = \emptyset$. One can see that there exists an equivariant isotopy $G$ of $S^3$ sending $H_1(p)$ to $p$ which is supported in a neighbourhood of the arc in $\ax$ disjoint from $L$ connecting $p$ and $H_1(p)$. Moreover we can suppose that $ G_1\circ H_1$ fixes a neighbourhood $U$ of $p$. Let $T$ be a homothety of $\R^3$ centered on a point in $\ax \cap U$ such that $T(L)\subset U$. 
 There is an obvious isotopy between $\id_{\R^3}$ and $T$ that is a homothety at every time. The trace of $L$ along this isotopy can be easily tessellated by triangles in an equivariant way.
 As a consequence, $L$ and $T(L)$ are related by a finite sequence of equivariant triangle moves. Now we show that $T(L)$ and $L'$ are related by the same operations which will imply the statement. In fact, $G_1\circ H_1$ being a PL function, it maps the triangle moves between $L$ and $T(L)$ to (a composition of) triangle moves between $G_1\circ H_1(L)=L'$ and $G_1\circ H_1(T(L))=T(L) $, possibly going through $\infty$.

 In general, one can find $t_0,\ldots,t_k$ such that $0=t_0<\ldots <t_k=1$ and $p_i \in \ax \setminus \{\infty\}$ for $i =0,\ldots,k$ such that $H(p_i,[t_i,t_{i+1}]) \cap H(L,[t_i,t_{i+1}]) = \emptyset$. Therefore one can apply the previous reasoning to this sequence of isotopies to conclude the proof.
\end{proof}

From now on we suppose that all PL strongly involutive links under consideration have the points in $L \cap \ax$ as vertices. 
Notice that by definition a move $\E_{a,\tau(a)}^b$ can only be applied on consecutive vertices $ a$ and $\tau(a)$.
Since from now on all our links have all points in $L \cap \ax$ as vertices, these moves cannot be applied on our links. We thus introduce a substitute for the $\E$ moves, that we call $\D$ move.

\begin{dfn} \label{def: admissibleD}
Let $a,c$ be consecutive vertices in $L$, and let $b$ be another point in $\ax$. The quadrilateral $Q$ with vertices $a,b,c,\tau(a)$ lies in a plane $\alpha$. When $L'=\big(L \setminus ([a,c]\cup [c,\tau(a)])\big) \cup ([a,b]\cup[b,\tau(a)])$ is a link, we call it $\mathcal{D}^b_{a,c}(L)$. \\
When either $Q \cap L= [a,c]\cup [c,\tau(a)]$ or the closure in $\R^3$ of $(\alpha \setminus Q) \cap L$ is equal to $ [a,c]\cup [c,\tau(a)]$, $\D^b_{a,c}$ is said to be \emph{admissible} (\Cref{fig:Dmove}). In the second case we say that this is a $\D$ move going through $\infty$. 
\end{dfn}

\begin{figure}
    \centering
    \includegraphics[width=0.5 \textwidth]{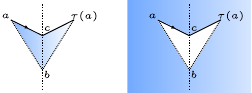}
    \caption{The quadrilateral $Q$ (left) and $\alpha \setminus Q$ (right) in the definition of the move $\D$.}
    \label{fig:Dmove}
\end{figure}

The following is a straightforward consequence of \Cref{lem: equivalence}.

\begin{lem}
    Two strongly involutive links $L,L'$ are equivariantly isotopic if and only if they are related by a finite sequence of $\Etil$ moves, $\D$ moves and their inverses.
\end{lem}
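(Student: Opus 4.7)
The plan is to reduce directly to \Cref{lem: equivalence}, translating every move of type $\E_{a,\tau(a)}^b$ appearing there into an admissible $\D$ move, using the standing convention that every point of $L\cap\ax$ is a vertex of $L$.

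The easy direction ($\Etil$ and $\D$ moves suffice to produce an equivariant isotopy) is immediate: every admissible $\Etil$ move and every admissible $\D$ move (including those going through $\infty$) is realized by a $\tau$-equivariant PL ambient isotopy of $S^3$ supported near the triangle or quadrilateral in question. For a $\D_{a,c}^b$ move, the sweeping point can be chosen inside the segment $[c,b]\subset\ax$, and the resulting isotopy is $\tau$-equivariant because each intermediate configuration $[a,p]\cup[p,\tau(a)]$ with $p\in\ax$ is $\tau$-invariant.

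For the converse, I would apply \Cref{lem: equivalence} to obtain a finite sequence $L=L_0,L_1,\ldots,L_n=L'$ of strongly involutive links such that each step is an admissible $\Etil$ move or an admissible $\E_{a,\tau(a)}^b$ move with $b=\tau(b)$. The $\Etil$ moves require no modification. For the other kind, observe that $b=\tau(b)$ forces $b\in\ax$, and that the $\tau$-invariant edge $[a,\tau(a)]$ meets $\ax$ exactly at its midpoint $c$, since $\tau|_{\R^3}$ is a $\pi$-rotation about $\ax$. Under the standing convention this midpoint $c$ must appear as a vertex, so what looks like a single edge $[a,\tau(a)]$ is really the concatenation $[a,c]\cup[c,\tau(a)]$. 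The move then replaces $[a,c]\cup[c,\tau(a)]$ with $[a,b]\cup[b,\tau(a)]$, where both $b$ and $c$ lie on $\ax$, and this is precisely the $\D$ move $\D_{a,c}^b$. Its admissibility, and the distinction between moves passing through $\infty$ and those that do not, transfers directly from the corresponding properties of $\E_{a,\tau(a)}^b$ recorded in \Cref{rmk: asse}.

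The main technical point is that the intermediate links $L_i$ furnished by \Cref{lem: equivalence} need not a priori satisfy the vertex convention. To handle this, I would refine each $L_i$ by inserting the missing $L_i\cap\ax$ points as vertices; this is a purely combinatorial refinement of the PL structure that leaves each link unchanged as a subset of $\R^3$ and is compatible with $\tau$. The refinement does not disrupt any $\Etil$ move already in the sequence, because \Cref{rmk: asse} guarantees that the interior of the edge $[a,c]$ involved in an admissible $\Etil$ move is disjoint from $\ax$, so $[a,c]$ persists as an edge after refinement. Once the refinement is in place, the rewriting above applies at every step and produces the desired sequence of $\Etil$ moves, $\D$ moves, and their inverses.
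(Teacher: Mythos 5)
Your proposal is correct and fills in exactly the translation the paper leaves implicit (the paper states the lemma as a ``straightforward consequence'' of \Cref{lem: equivalence} with no further argument). You correctly identify the two ingredients: the easy direction realizes each $\Etil$ and $\D$ move by a $\tau$-equivariant ambient isotopy, and the converse refines the intermediate links of \Cref{lem: equivalence} so that $L_i\cap\ax$ consists of vertices, after which each $\E_{a,\tau(a)}^b$ move with $b=\tau(b)$ becomes a $\D_{a,c}^b$ move where $c$ is the midpoint of $[a,\tau(a)]$ (indeed $c\in\ax$, since $\tau$ acts as a $\pi$-rotation so $[a,\tau(a)]$ meets $\ax$ precisely at its midpoint, and $Q=[a,b,c,\tau(a)]$ degenerates to the triangle $[a,\tau(a),b]$, making the admissibility conditions literally coincide). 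One small imprecision: the fact that the refinement does not subdivide any edge involved in an admissible $\Etil$ move is most directly a consequence of \Cref{rmk: admissibleEtil} (which says $[a,b,c]\cap\ax$ is empty or an endpoint of $[a,c]$, so that the new edges $[a,b],[b,c]$ also persist), rather than \Cref{rmk: asse}; the cited remark only gives the claim for $[a,c]$, not for the two new edges. This is a citation quibble, not a gap in the reasoning.
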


The following lemma will help us establish when a $\Etil$ move is admissible.

\begin{lem} \label{lem: convex}
Let $K$ be a compact convex subset of $\R^3 \subset S^3$. Then $K \cap \tau (K) \neq \emptyset$ if and only if $K \cap \mathbf{a}\neq \emptyset$.
\end{lem}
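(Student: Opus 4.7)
The plan is to prove each direction separately, with the reverse implication being essentially immediate and the forward implication requiring only convexity plus the geometry of a $\pi$-rotation.

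For the ``if'' direction, I would observe that every point $p \in \mathbf{a}$ satisfies $\tau(p) = p$, so if $p \in K \cap \mathbf{a}$ then $p = \tau(p) \in \tau(K)$, hence $p \in K \cap \tau(K)$. Here I use only that $\mathbf{a}$ is the fixed-point set of $\tau$ in $\R^3$.

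For the ``only if'' direction, suppose $q \in K \cap \tau(K)$. Since $\tau$ is an involution, $q \in \tau(K)$ means $\tau(q) \in K$. Thus both $q$ and $\tau(q)$ lie in $K$, and by convexity the whole segment $[q,\tau(q)] \subset \R^3$ is contained in $K$. The key geometric observation is that $\tau|_{\R^3}$ is a $\pi$-rotation around $\mathbf{a}$, so the midpoint $m = \tfrac{1}{2}(q + \tau(q))$ coincides with the orthogonal projection of $q$ onto $\mathbf{a}$, and in particular $m \in \mathbf{a}$. Since $m \in [q,\tau(q)] \subset K$, this yields $m \in K \cap \mathbf{a}$.

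There is no real obstacle: compactness is not even used in the argument (it is stated in the hypothesis presumably because the lemma is applied to triangles and segments in the previous remarks), and the argument works uniformly whether $q$ already lies on $\mathbf{a}$ (in which case $m = q$) or not. The only thing to flag cleanly in the write-up is why $m$ lies on $\mathbf{a}$: this is the elementary fact that for a $\pi$-rotation $\tau$ around a line $\mathbf{a} \subset \R^3$, one has $q + \tau(q) = 2\,\mathrm{proj}_{\mathbf{a}}(q)$ for every $q \in \R^3$, which follows from decomposing $q$ into its components parallel and perpendicular to $\mathbf{a}$ and noting that $\tau$ fixes the parallel part and negates the perpendicular part.
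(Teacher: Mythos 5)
Your proof is correct and follows the paper's argument essentially verbatim: the forward direction uses that $\mathbf{a} = \operatorname{Fix}(\tau)$, and the converse uses convexity to get $[q,\tau(q)] \subset K$ and then notes this segment meets $\mathbf{a}$. The only difference is that you spell out why $[q,\tau(q)]$ meets $\mathbf{a}$ (the midpoint is $\mathrm{proj}_{\mathbf{a}}(q)$), a step the paper leaves implicit.
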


\begin{proof}

Of course when $K\cap \mathbf{a}\neq \emptyset$, since $\ax$ is the set of fixed points of $\tau$, then $K \cap \tau (K) \neq \emptyset $. Viceversa, if $K \cap \tau (K) \neq \emptyset$, let $x \in K \cap \tau (K)$. Then $\tau(x) \in K$ and since $K$ is convex the segment $[x,\tau(x)] \subset K$. But $[x,\tau(x)]\cap \ax\neq \emptyset$ and as a consequence $ K \cap \ax \neq \emptyset$.
\end{proof}

\begin{rmk} \label{rmk: admissibleEtil}
    As a consequence of \Cref{rmk: asse} and \Cref{lem: convex}, the move $\Etil_{a,c}^b$ on a strongly involutive link $L$ is admissible if and only if $[a,b,c]\cap L=[a,c]$ and $[a,b,c] \cap \ax$ is either empty or it is an endpoint of $[a,c]$ lying in $\ax$.
\end{rmk}

\section{Palindromic braids and the equivariant closure map} \label{sec: palindromi}
In view of the proof of the equivariant version of Alexander theorem, in this section we define the equivariant closure map. This will take as input two braids $\alpha$ and $\beta$ with a particular symmetry, called palindromic braids, and has a strongly involutive link as output, whose underlying link is $\widehat{\alpha\beta}$ and that is in equivariant braid position with respect to the unknotted circle $\bx=\{(0,0,z) \in \R^3\} \cup \{\infty\}$ contained in $\R^3 \cup \{\infty\} =S^3$.

Let us start by clarifying what we mean by equivariant braid position. 
Recall first that in the non-equivariant setting, an oriented link $L$ is said to be in braid position with respect to an unknot $A$ if the link $L$ can be obtained, up to ambient isotopy with support in the exterior of $A$, as the closure of a braid with respect to the axis $A$ as described for example in Chapter 2 of \cite{BurdeZieschang}.

\begin{dfn}
Let $(L,\tau)$ be a strongly involutive link that is in braid position with respect to some axis $A$. Suppose that $\tau$ preserves $A$ reversing its orientation. In this case we say that $(L,\tau)$ is in \emph{equivariant braid position} with respect to $A$.
\end{dfn}

We now define what are palindromic braids and show that they have some symmetries.

\begin{dfn} \label{def: palindromic}
Let $\alpha = \sigma_{i_1}^{a_{1}}\cdots \sigma_{i_m}^{a_{m}}$ be a $n$-braid, where $\sigma_i$ is the $i$-th Artin generator of the braid group. 
Then the \emph{reverse} 
of $\alpha$ is $\overline{\alpha}=\sigma_{i_m}^{a_{m}} \cdots \sigma_{i_1}^{a_{1}}.$ If $\alpha= \overline{\alpha}$ the braid is said to be \emph{palindromic}.
\end{dfn}

For every $n \in \N$, let $p_1,\ldots,p_n \in  D^2$ be $n$ points.
Recall that $n$-braids can be defined as $1$-submanifolds of $I \times D^2$ with no closed components, such that every connected component embeds strictly monotonically with respect to the $I$ component, and with boundary equal to $ \bigcup_{i=1}^n \{0,1\} \times\{p_i\}  $, up to ambient isotopy fixing the boundary of $I \times D^2$.

We will always assume that the $p_i$'s lie in $\{(t,0) \,| \, |t| <1\} \subset D^2$.
\begin{rmk} \label{rmk: palindromicity}
As a consequence of Proposition 2.9 in \cite{Deloup}, a palindromic braid $\alpha$ can be written as $\alpha= \gamma\Delta \overline{\gamma}$, where $\gamma$ is some braid, and $\Delta=\sigma_{j_1}^{\epsilon_1} \cdots \sigma_{j_k}^{\epsilon_k}$, with $\epsilon_i = \pm 1$ and $|j_i-j_l|>2$ for all $i,l$ (i.e. the $\sigma_{j_i}$ commute). As a consequence, every palindromic braid has a representative $a$ contained in $I \times D^2$ such that $a= \rho(a)$, where $$\rho: I \times D^2 \to I \times D^2$$
sends $(x,(y,z)) \in I \times D^2$ to $(1-x, (y , -z))$. Roughly speaking, $\rho$ is a $\pi$-rotation around the segment $\{ (1/2,(t,0))\, |\, |t|<1 \}\subset I \times D^2 $. We will call such a representative for $a$ a \emph{symmetric representative}.

Viceversa, if a braid $\alpha$ has a symmetric representative then it is easy to see that $\alpha= \gamma \Delta \overline{\gamma}$ for some $\gamma$ and $\Delta$ as above and is therefore palindromic (see \Cref{fig:palindromicbraid}).
\end{rmk}

Given two palindromic $n$-braids $\alpha$ and $\beta$, let $a$ and $b$ be symmetric representatives of these braids as in \Cref{rmk: palindromicity}. Fix $\tau:S^3\to S^3$ to be the $\pi$-rotation around an unknotted circle $\ax$ as in \Cref{sec: firstdef}. We will always suppose from now on that $\ax \setminus \{\infty\}$ is equal to the $y$-axis in $\R^3$. Then the equivariant closure of $a$ and $b$ is the strongly involutive link $(\widehat{ab},\tau)$ constructed as in \Cref{fig:eqbraidclosure}. We now state a precise definition.

  Consider $B_1=\{(x,y,z) \in \R^3 \,|\, |x|<1/2, \|(y+2,z)\|<1 \}$ and $B_2=\{(x,y,z) \in \R^3 \,|\, |x|<1/2, \|(y-2,z)\|<1 \}$. There is an obvious identification of $B_i$ with $I \times D^2$ for $i=1,2$, since the two sets are translates along $(-1/2,\pm 2, 0)$ of $I \times D^2 \subset \R^3$. Use these identifications to embed $a$ inside $B_1$ and $b$ inside $B_2$.

\begin{dfn} \label{dfn: eqclosure}
    With the notation stated above and \Cref{fig:eqbraidclosure} in mind, the \emph{equivariant closure} of $a$ and $b$ is the strongly involutive link $(L,\tau)$, where $L$ is obtained by embedding $a$ and $b$ in $B_1$ and $B_2$ respectively and then connecting their endpoints as prescribed: for $j=1,\ldots, n$, connect $\{-1/2\} \times\{p_i+(2,0)\}$ to $\{-1/2\} \times\{p_i-(2,0)\} $  with arcs $\gamma_i$ inside the $(x,y)$-plane that project monotonically to the $y$-axis. Connect $\{1/2\} \times\{p_i+(2,0)\}$ to $\{1/2\} \times\{p_i-(2,0)\} $  with the arc $\tau(\gamma_i)$. 
    We orient $L$ so that every component rotates counterclockwise with respect to $0$ in the projection onto the $(x,y)$-plane. 
\end{dfn}

Notice that in the previous definition, the underlying link of the equivariant closure of $a$ and $b$ would be the closure of ${a\overline{b}}$. However, since $a$ and $b$ are symmetric representatives of palindromic braids, this is equal to the closure of ${a{b}}$.

\begin{rmk}
    Observe that the resulting strongly involutive link is indeed in equivariant braid position with respect to $\bx = \{(0,0,z) \in \R^3\} \cup \{\infty\}$.
\end{rmk}

\begin{figure}
    \centering
    \includegraphics[width= 0.3 \textwidth]{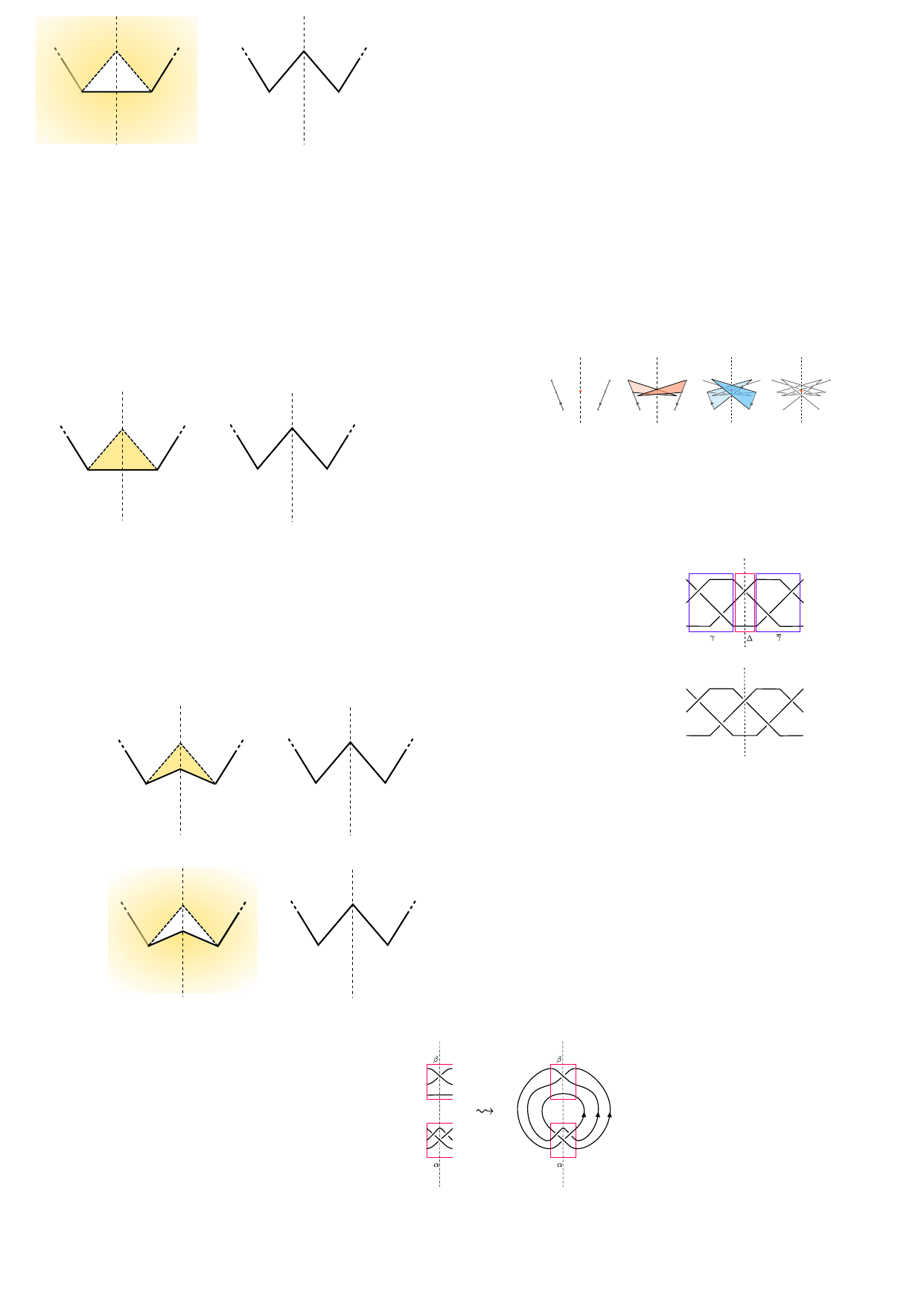}
    \caption{Palindromic braids are exactly those braids that are preserved by the $\pi$-rotation around the dotted axis.}
    \label{fig:palindromicbraid}
\end{figure}

\begin{figure}
    \centering
    \includegraphics[width= 0.45 \textwidth]{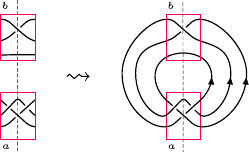}
    \caption{The equivariant closure of symmetric representatives of two palindromic braids. The involution $\tau$ is the $\pi$-rotation around the dotted vertical axis.}
    \label{fig:eqbraidclosure}
\end{figure}

The rest of this section is devoted to showing that the equivariant closure map does not depend, up to equivariant isotopy, on the choice of symmetric representative for a palindromic braid.

The following theorem shows that all possible symmetric representatives of a palindromic braid are actually equivariantly isotopic relative to the boundary.

\begin{thm} \label{thm: uniqueness}
Let $\alpha$ be a palindromic braid and let $a,a'\subset I \times D^2$ be two representatives such that $\rho(a)=a$ and $\rho(a')=a'$. Then there exists a PL homemorphism $H:I \times D^2 \to I \times D^2$, such that $H(a)=a'$, $H \circ \rho = \rho \circ H$ and $H_{|\partial (I \times D^2)}= \id_{\partial (I \times D^2)}$.
\end{thm}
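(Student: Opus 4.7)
My plan is to pass to the orbit space $(I \times D^2)/\rho$, solve a non-equivariant PL tangle problem there, and lift the resulting homeomorphism to the double cover. The quotient $X := (I \times D^2)/\rho$ is a PL $3$-ball, and the fixed axis $A$ of $\rho$ descends to a distinguished arc $\bar A := \pi(A) \subset X$, where $\pi$ denotes the quotient projection. Since $\alpha$ is palindromic, the permutation $\pi_{\alpha} \in S_{n}$ induced by $\alpha$ is an involution; consequently, for any symmetric representative $a$ the strands decompose into $\rho$-invariant strands (one for each fixed point of $\pi_{\alpha}$, each crossing $A$ at its unique $\rho$-fixed midpoint) and $\rho$-paired pairs of strands. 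Because this combinatorial data depends only on $\alpha$, it is shared by $a$ and $a'$.

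First I would apply a preliminary equivariant PL isotopy (supported near $A$, rel $\partial(I \times D^2)$) to arrange that $a$ and $a'$ coincide inside a small $\rho$-invariant tubular neighborhood $N$ of $A$; this is possible by the combinatorial matching just described. On the complement $W := \overline{(I \times D^2) \setminus N}$ the action of $\rho$ is free, so the restrictions $a \setminus N$ and $a' \setminus N$ descend via $W \to Y := W/\rho$ to PL tangles $\bar a,\bar a'$ in the $3$-manifold $Y$, with identical boundary on $\partial Y$. The second and main step is to produce a PL ambient isotopy of $Y$ rel $\partial Y$ carrying $\bar a$ to $\bar a'$; once this is achieved, the isotopy lifts to a $\rho$-equivariant PL isotopy of $W$, which extends by the identity on $N$ (where $a = a'$ already) to yield the desired $H$ satisfying $H(a)=a'$, $H \circ \rho = \rho \circ H$, and $H|_{\partial(I \times D^2)} = \id$.

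The main obstacle is the second step, and it is where care is needed. It is \emph{not} immediate that $\bar a$ and $\bar a'$ are PL isotopic in $Y$ rel $\partial Y$: although $a$ and $a'$ are PL isotopic in $I \times D^2$ rel boundary (both representing $\alpha$), a given non-equivariant isotopy need not commute with $\rho$ and therefore need not descend to $Y$. To resolve this, I would decompose a non-equivariant PL isotopy between $a$ and $a'$ into a finite sequence of admissible triangle moves, and then argue by an equivariant general-position argument that, after sufficient subdivision and small perturbation, each triangle move is either $\rho$-invariant (and hence already descends to $Y$) or has its support disjoint from that of its $\rho$-conjugate. In the latter case, simultaneous performance of the move and its $\rho$-conjugate constitutes an equivariant move on $W$, using freeness of $\rho$ there to guarantee non-interference. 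The resulting equivariant sequence of moves descends to the required PL isotopy in $Y$, completing the argument.
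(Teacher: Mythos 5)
Your strategy — pass to the quotient, solve the isotopy problem downstairs, and lift — is natural, but the step you flag as the ``main obstacle'' contains a genuine gap that your proposed resolution does not close. You take a non-equivariant sequence of triangle moves $T_1,\ldots,T_k$ with $T_k\cdots T_1(a|_W)=a'|_W$, make all supports small and disjoint from their $\rho$-conjugates, and then replace each $T_i$ by the equivariant move $\widetilde T_i=(\rho T_i\rho^{-1})\circ T_i$. The difficulty is that $\widetilde T_k\cdots\widetilde T_1(a|_W)$ is \emph{not} in general equal to $a'|_W$. The intermediate links $T_{i-1}\cdots T_1(a|_W)$ are not $\rho$-symmetric (that failure of symmetry is exactly what makes the original isotopy ``non-equivariant''), so the extra moves $\rho T_i\rho^{-1}$ hit strands that, under the original sequence, were going to be moved later by some $T_j$ with $j>i$. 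Concretely, if the original sequence first carries a strand $c$ to $c'$ (moves $T_1,\ldots,T_m$) and then carries $\rho(c)$ to $\rho(c')$ (moves $T_{m+1},\ldots,T_k$), your symmetrised moves $\widetilde T_1,\ldots,\widetilde T_m$ already bring $c\cup\rho(c)$ to $c'\cup\rho(c')$, and the subsequent $\widetilde T_{m+1},\ldots,\widetilde T_k$ overshoot. Disjointness of each $T_i$ from its own $\rho$-conjugate guarantees that $\widetilde T_i$ is a well-defined equivariant move, but says nothing about $\rho T_i\rho^{-1}$ commuting with the later $T_j$, which is what you would need for the endpoints to match. There is also a smaller unaddressed point: you implicitly assume a non-equivariant PL isotopy of $W$ rel $\partial W$ from $a|_W$ to $a'|_W$ exists; the hypothesis only gives an isotopy of $I\times D^2$ rel its boundary, and one must argue that it can be chosen to respect $N$.

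For comparison, the paper does not try to equivariantise an isotopy of the tangle at all. It builds an explicit $\rho$-equivariant homeomorphism $\phi$ between tubular neighbourhoods $N_a\to N_{a'}$ and extends it (non-equivariantly) across the handlebody complement; the defect is then encoded as two involutions $\rho$ and $\rho'=\phi^{-1}\rho\phi$ on the complement $X_a$ that agree on $\partial X_a$. The classification of PL involutions on handlebodies supplies a conjugating homeomorphism $f$, and the heart of the argument is \Cref{prop: handlebody}: a homeomorphism of a handlebody that is equivariant only on the boundary can be replaced, without changing its boundary restriction, by a fully equivariant one. This is established via a careful analysis of meridian disks, the fixed-point set, the $\pi_1$-action, and a lifting argument through the quotient handlebody $H_g/\tau$. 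That lemma is precisely the nontrivial input your approach implicitly needs and tries to substitute with the general-position/pairing heuristic. If you want to pursue the quotient route, you would need a genuine reason why $\bar a$ and $\bar a'$ are isotopic in $Y$ rel $\partial Y$ — for instance via a classification of tangles in the quotient solid torus — rather than hoping the upstairs isotopy can be made equivariant move by move.
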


To prove \Cref{thm: uniqueness} we need to establish an intermediate result.

\begin{prop} \label{prop: handlebody}
    Let $H_g$ be a handlebody of genus $g$ and let $\tau:H_g \to H_g$ be an orientation preserving involution of $H_g$ with fixed points and such that the fixed point set has no closed components. Suppose that $\psi:H_g\to H_g$ is a PL homeomorphism that is $\tau$-equivariant on the boundary, i.e. such that $\tau \circ \psi_{|\partial H_g} = \psi \circ \tau_{|\partial H_g}$. Then there exists a $\tau$-equivariant PL homeomorphism $\widetilde{\psi}:H_g \to H_g$, 
    such that $\widetilde{\psi}_{|\partial H_g}=\psi_{|\partial H_g}$.
\end{prop}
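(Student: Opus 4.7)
The plan is to reduce to a PL $3$-ball by cutting $H_g$ along a $\tau$-equivariant system of meridian disks, and to handle the $3$-ball case with an equivariant cone construction.

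\emph{Base case $g=0$.} Here $H_0$ is a PL $3$-ball and by the PL version of the positive resolution of the Smith conjecture the involution $\tau$ is conjugate to a standard $\pi$-rotation about a diameter. Pick any interior fixed point $p$ and define
\[
\widetilde\psi\bigl(p + t(x-p)\bigr) := p + t\bigl(\psi(x)-p\bigr), \qquad x \in \partial H_0,\; t\in[0,1].
\]
This is a PL self-homeomorphism of $H_0$ restricting to $\psi$ on the boundary; since $\tau$ is linear in coordinates centered at $p$, the boundary equivariance of $\psi$ propagates to $\tau\circ\widetilde\psi = \widetilde\psi\circ\tau$ throughout $H_0$.

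\emph{Reduction from $H_g$ to a $3$-ball.} For $g\ge 1$, invoke the PL equivariant loop theorem (of Meeks--Yau / Tollefson type) to obtain a complete system of properly embedded non-separating essential meridian disks $\{D_1,\dots,D_g\}\subset H_g$ preserved setwise by $\tau$: each $D_i$ is either $\tau$-invariant or paired with a distinct $D_j=\tau(D_i)$ inside the system, and cutting $H_g$ along $\bigcup_i D_i$ produces a PL $3$-ball $B$ on which the restricted involution still has arc-type fixed set. Because $\psi|_{\partial H_g}$ is $\tau$-equivariant, the simple closed curve system $\psi\bigl(\bigcup_i \partial D_i\bigr)$ in $\partial H_g$ is again $\tau$-invariant with the same pairing pattern, so the equivariant loop theorem applied in the target yields a matching $\tau$-equivariant disk system $\{D_i'\}$ with $\partial D_i'=\psi(\partial D_i)$. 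Next extend $\psi|_{\partial D_i}$ to a PL homeomorphism $D_i\to D_i'$ for each $i$: when $D_i$ is $\tau$-invariant use the $2$-dimensional analog of the base case (cone from an interior fixed point of $\tau|_{D_i}$); when $\{D_i,\tau D_i\}$ is a free pair, extend arbitrarily on $D_i$ and force equivariance on $\tau D_i$ by conjugation with $\tau$. After cutting source and target along their disk systems one is left with two PL $3$-balls together with a $\tau$-equivariant PL homeomorphism of their boundaries; the base case supplies the equivariant extension on this ball, and regluing yields the desired $\widetilde\psi$.

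The main obstacle is the equivariant disk-finding step: producing an invariant meridian disk system in the source and a matching one in the target with prescribed boundaries and the same invariance pattern. Both rely on an equivariant PL loop theorem/Dehn's lemma for involutions of handlebodies with arc-type fixed set; once these are granted, the cone construction in the base case and the explicit equivariant extensions on the cut disks make the rest of the argument routine.
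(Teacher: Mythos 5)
Your approach is genuinely different from the paper's. The paper puts $\tau$ in standard form using the structural results of McCullough--Miller--Zimmermann and Pantaleoni--Piergallini, then uses a $\pi_1$ and algebraic-intersection-number argument to show that $\psi$ sends certain invariant meridians to curves that still bound invariant disks, passes to the quotient handlebody $\overline H_g = H_g/\tau$, extends the induced boundary map to the quotient by ordinary (non-equivariant) cut-and-fill, and finally lifts through the branched double cover using covering-space theory. Your proof replaces quotient-and-lift with an equivariant cut-and-cone: find a $\tau$-invariant complete meridian system in the source, a matching one with boundaries $\psi(\partial D_i)$ in the target, extend across the disks equivariantly, cut both handlebodies to $3$-balls, and cone the resulting boundary map from an interior fixed point. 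The tradeoff is clear: the paper's argument is longer but self-contained modulo the classification of handlebody involutions, whereas yours is conceptually cleaner but imports the equivariant Dehn's lemma / loop theorem (Meeks--Yau, Tollefson) as a black box. A few places deserve care if you flesh this out. First, producing a complete $\tau$-invariant meridian system and then a disjoint equivariant system with the \emph{prescribed} boundaries $\psi(\partial D_i)$ is not a single application of the equivariant loop theorem: one has to iterate the equivariant Dehn's lemma, cutting after each step and re-verifying the hypotheses (in particular, that the fixed set of the induced involution on the cut manifold still has no closed components, and that the induced action is still of the right type). Second, the literature is largely smooth; you should cite a PL version or note that in dimension $3$ the DIFF and PL categories agree so the transfer is harmless. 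Third, when a disk $D_i$ is $\tau$-invariant, $\tau|_{D_i}$ can be a reflection (fixing an arc) rather than a rotation (fixing a point), since $\tau$ may exchange the two sides of $D_i$; the cone construction still works from any fixed interior point, but the phrase ``interior fixed point of $\tau|_{D_i}$'' reads as though the fixed set were always a single point and should be adjusted. None of these is a fatal gap, but they are exactly the details that make the paper's quotient-and-lift route attractive as a more elementary alternative.
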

\begin{proof}
As a consequence of results on the involutions of handlebodies obtained in \cite{McCulloughMillerZimmermann} (see also \cite{PantaleoniPiergallini}) we can suppose that $g= g_1+2 g_2$ and $\tau$ is the involution described in \Cref{fig:handlebody1}.

Call $a_1,\ldots,a_{g_1+1}$ the arcs fixed by $\tau$ in $H_g$. Let $\gamma_1,\ldots,\gamma_{g_1+1}$ be oriented arcs in the boundary of $H_g$ obtained by pushing the arc $a_i$ relative to the boundary and such that $c_i=\gamma_i \cup \tau(-\gamma_i)$ is the boundary of a compressing disk $D_i$ for $H_g$ (\Cref{fig:handlebody1}).

Notice that since $\psi$ is $\tau$-equivariant on the boundary, the set of fixed points of $\tau$ on the boundary is preserved by $\psi$. 

Observe as well that the endpoints of an arc $a_i$ (and therefore the endpoints of $\gamma_i$) are sent by $\psi$ to the endpoints of an arc $a_j$ for some $j$. In fact if that was not the case, the endpoints of $a_i$ would be sent to the endpoints of two different segments, $a_j$ and $a_k$ with $j \neq k$. On the one hand we know that the algebraic intersection number of $\psi(c_i)$ and $D_j$ must be $0$ since $\psi(c_i)$ bounds a disk in $H_g$. On the other hand, this intersection number would be equal to $n_1 + n_2 \pm 1$, where $n_1$ is the intersection number of the interior of $\psi(\gamma_i)$ with $c_j$ and $n_2$ is the intersection number of the interior of $\psi(\tau(-\gamma_i))=\tau(\psi(-\gamma_i))$ with $c_j$. We need to add $\pm 1$ coming from the intersection with $c_j$ on the boundary of $\psi(\gamma_i)$. One just has to notice that $n_1 = n_2$ since $\tau(c_j)=-c_j$, and therefore the endpoints of $\psi(a_i)$ must be the endpoints of some $a_{j_i}$.

Observe that there is a set of generators $x_1,\ldots,x_g$ of $\pi_1(H_g)$ such that $\tau_*(x_i)=x_i^{-1}$ for $i\le g_1$, $\tau_*(x_i)=x_{i+g_2}$ for $g_1 <i\le g_1+g_2 $ and $ \tau_*(x_i)=x_{i-g_2}$ for $ g_1 +g_2<i\le g$ (see \Cref{fig:handlebody2}). 

Notice that $\tau_*$ is an automorphism of a finitely generated free group that preserves the length of the words in the generators listed above. 

Then, since no generator is fixed by $\tau_*$ it follows easily that $\tau_*$ has no nontrivial fixed point. 

\begin{figure}
    \centering
    \includegraphics[width=0.5 \textwidth]{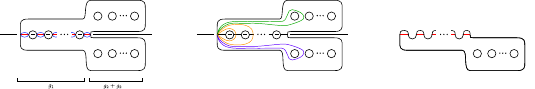}
    \caption{The handlebody $H_g$. The involution $\tau$ is given by rotation of $\pi$ around the horizontal axis. The curves $c_i$ are blue, the arcs $a_i$ are in red.}
    \label{fig:handlebody1}
\end{figure}

\begin{figure}
    \centering
    \includegraphics[width= 0.5 \textwidth]{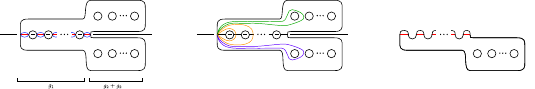}
    \caption{The generators $x_i$ of $\pi_1(H_g)$. The orange curves are the $x_i$ for $i \le g_1 $, the green and purple curves are the $x_i$ for $g_1<i\le g_1+g_2$ and $g_1+g_2<i\le g$ respectively.}
    \label{fig:handlebody2}
\end{figure}

\begin{figure}
    \centering
    \includegraphics[width= 0.45 \textwidth]{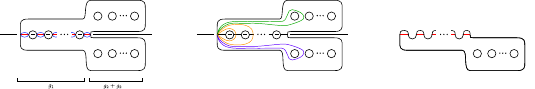}
    \caption{The quotient handlebody $\overline{H}_g$.}
    \label{fig:handlebody3}
\end{figure}

Suppose that the endpoints of $a_i$ are sent to the endpoints of $a_{j_i}$. 
We want to show that $\psi(\gamma_i) \cup a_{j_i}$ also bounds a disk in $H_g$. In fact $\psi(c_i)$ is null-homotopic and is also homotopic to the concatenation of $\psi(\gamma_i) \ast (\pm\gamma_{j_i})$ and the inverse of $\tau_*( \psi(\gamma_i) \ast (\pm \gamma_{j_i}))=(\tau(\psi(\gamma_i)))\ast \tau(\pm\gamma_{j_i})$, since $\gamma_{j_i}$ and $\tau(\gamma_{j_i})$ are homotopic arcs relative to their boundary.
Here the orientation that we have to consider on $ \gamma_{j_i}$ depends on how $\psi$ maps the endpoints of $\gamma_i $ to the endpoints of $ \gamma_{j_i}$.

Hence we write $\psi(c_i)$ as a concatenation of two loops, and therefore one should be the inverse of the other, that is to say $\tau_*( \psi(\gamma_i) \ast (\pm\gamma_{j_i}))= \psi(\gamma_i) \ast (\pm\gamma_{j_i})$, but by the above remark this is possible if and only if $\psi(\gamma_i) \ast (\pm\gamma_{j_i})$ is null-homotopic. Since $\gamma_{j_i}$ is homotopic relative to its boundary to $a_{j_i}$ then we have that $\psi(\gamma_i) \cup a_{j_i} $ bounds a disk in $H_g$ as desired. 

Let $p:H_g \to \overline{H}_g \coloneqq H_g {/\tau}$ the projection to the quotient. Notice that $\overline{H}_g$ is a handlebody of genus $g_2$ (\Cref{fig:handlebody3}).

Observe that the restriction of $\psi$ to the boundary induces a well-defined PL homeomorphism $\overline{\psi}$ on the boundary of $\overline{H}_g$. 
We would like to extend $\overline{\psi} $ to a PL homeomorphism of $\overline{H}_g$ that fixes $p(\Fix(\tau))$ setwise.

To do that, we start by remarking that, since $\gamma_i \cup a_i$ and $\psi(\gamma_i) \cup a_{j_i}$ bound a disk in $H_g$, their image via $p$ in $\overline{H_g}$ both bound embedded disks, that we call $\overline{D}_i$ and $\overline{D}'_i$. 

By a standard cut and paste argument we can suppose that for $i=1,\ldots,g_1+1$ the disks $\overline{D}_i$ are pairwise disjoint and the same holds for the disks $ \overline{D}_i'$.

We extend $\overline{\psi}$ on $\overline{D}_i$ so that $\overline{\psi}(\overline{D}_i)=\overline{D}'_i$ and $\overline{\psi} $ restricted to $\overline{D}_i $ is a PL homeomorphism onto the image. We still call the extended map $\overline{\psi}$. 
Let $\overline{F_j}$ for $j=1,\ldots, g_2$ be compressing disks for $\overline{H}_g$ that avoid the disks $\overline{D}_i$'s. Since these disks are far from the image of the fixed point set, $p^{-1} (\overline{F_j})$ consists of two disjoint disks $F_j, F'_j$ that are interchanged by $\tau$. Since $\overline{\psi}(\partial\overline{F_j})= p (\psi(\partial F_j)) $, and $\psi(\partial F_j) $ bounds the disk $\psi(F_j) \subset H_g$, $\overline{\psi}(\partial\overline{F_j})$ bounds a compressing disk. Hence we can extend $\overline{\psi}$ to be defined on the compressing disks $ \overline{F_j}$. Now, since $\overline{H}_g \setminus \bigcup_{i=1}^{g_1+1} \overline{D}_i \cup \bigcup_{j=1}^{g_2} F_j$ is an open ball, we can extend $\overline{\psi}$ on the whole $\overline{H}_g$ as well. We still call the resulting map $\overline{\psi}$.

We now want to show that $\overline{\psi}$ can be lifted to $\psi:H_g \to H_g$.
By standard covering theory, since $p(\Fix(\tau))$ is fixed setwise by $\overline{\psi}$, $\overline{\psi}$ can be lifted if and only if the image of
$\overline{\psi}_*\circ p_*: \pi_1(H_g \setminus \Fix(\tau)) \to \pi_1(\overline{H}_g \setminus p(\Fix(\tau))$
is contained in the image of 
$p_*: \pi_1(H_g \setminus \Fix(\tau)) \to \pi_1(\overline{H}_g \setminus p(\Fix(\tau)) $. 

Notice that the inclusion induces surjective homomorphisms $ \pi_1(\partial H_g \setminus \Fix (\tau)) \to \pi_1(H_g \setminus \Fix (\tau)) $ and $ \pi_1(\partial \overline{H}_g \setminus p(\Fix (\tau))) \to \pi_1(\overline{H}_g \setminus p(\Fix (\tau)))$. Since $\overline{\psi}_{|\partial \overline{H}_g}$ has $\psi$ as a lift on $\partial H_g$, this implies that $ \overline{\psi}_* \circ p_* (  \pi_1(\partial H_g \setminus \Fix (\tau)) ) \subset  p_*(\pi_1(\partial H_g \setminus \Fix (\tau)))$. By the naturality of the inclusion induced maps it follows that $\overline{\psi}_* \circ p_* (  \pi_1( H_g \setminus \Fix (\tau)) ) \subset  p_*(\pi_1( H_g \setminus \Fix (\tau))) $.

Hence $\overline{\psi}$ has a unique lift $\widetilde{\psi}$ which coincides with $\psi$ on the boundary. Since $\tau$ is an involution, this lift is by construction $\tau$-equivariant.
\end{proof}

We can finally prove \Cref{thm: uniqueness}.

\begin{proof}[Proof of \Cref{thm: uniqueness}]
Given a point $(t,p) \in I \times D^2$ and $\varepsilon>0$, let us denote by $D_\varepsilon(t,p)$ the closed ball of radius $\varepsilon$ contained in $\{t\}\times D^2$ and centered at $p$. 
Let $\varepsilon$ be small enough so that $N_a \coloneqq \bigcup_{(t,p)\in a} D_\varepsilon(t,p) $ is a closed tubular neighbourhood of $a$ and $N_{a'} \coloneqq \bigcup_{(t,p')\in a'} D_\varepsilon(t,p') $ is a closed tubular neighbourhood of $a'$. 
Notice that both $N_a$ and $N_{a'}$ are fixed by $\rho$ as sets.

Let $a_1,\ldots,a_n$ and $ a_1',\ldots,a_n'$ be the connected components of $a$ and $a'$ respectively. We can suppose without loss of generality that we ordered them so that the endpoints of $a_i$ coincide with the ones of $a_i'$ for $i=1,\ldots,n$. Let $p_i, p_i': I \to D^2$ be such that $ a_i= \bigcup _{t \in I} (t,p_i(t))$ and $ a'_i = \bigcup_{t \in I} (t,p_i'(t))$ for $i= 1,\ldots ,n$.

Let $\phi: N_a \to N_{a'}$ be the map that sends $ (t,p_i(t) + v)\in N_a$, where $v \in \R^2$, $\|v\| \le \varepsilon$, to $(t,p'_i(t)+v) $ in $N_{a'}$. 

One can check by direct computation that $\phi$ is $\rho$-equivariant.

We would like to show that $\phi$ can be extended to a map $I \times D^2 \to I \times D^2$ so that it is the identity on the boundary.

Define $X_a$  to be the handlebody $(I\times D^2) \setminus \operatorname{Int}(N_a)$ and $X_{a'}$ to be the handlebody $(I\times D^2) \setminus \operatorname{Int}(N_{a'})$.
For $i=1,\ldots, n$, define $ \widetilde{a}_i \coloneqq \bigcup_{t \in I} (t, p_i(t) + (\varepsilon,0))$, the pushoff of $a_i$ on the boundary of $N_a $ along $(\varepsilon,0)$. 
Similarly, let $\widetilde{a}'_i$ be the pushoff of $a'_i$ on the boundary of $N_{a'} $ along $(\varepsilon,0)$. 
Let $\gamma_i$ be an arc in $\partial(I \times D^2)  \cap \partial X_a$, with the same endpoints as $\widetilde{a}_i$ and such that the loop $\gamma_i \cup \widetilde{a}_i$ bounds a disk inside $X_a$. We also ask the $\gamma_i$'s to be pairwise disjoint.

We claim that also $\gamma_i \cup \widetilde{a}'_i $  bounds a disk inside $X_{a'}$. In fact, since $a$ and $a'$ represent the same braid, there is a diffeomorphism of $I \times D^2$ fixing the boundary that maps $X_a$ to $X_{a'}$. This diffeomorphism will send $\widetilde{a}_i$ to a certain arc $\delta_i \subset \partial X_{a'} \cap \partial N_{a'}$. In particular, $a_i \cup \widetilde{a}_i $ and $a'_i \cup \delta_i $
 are isotopic relative to the boundary inside $I \times D^2$. Notice that there is only one arc, up to isotopy of $X_{a'} $ fixing $\partial (I \times D^2) \cap \partial X_{a'}$ and contained in $ \partial X_{a'}  \cap \partial N_{a'}$, with this property. 
 Observe that $ a_i \cup \widetilde{a}_i$ and $a'_i \cup \widetilde{a}'_i$ are clearly isotopic relative to the boundary inside $I \times D^2$, hence $\widetilde{a}'_i $ is isotopic to $\delta_i$ in $ X_{a'}$, relative to  $\partial (I \times D^2) \cap \partial X_{a'}$. As a consequence $ \gamma_i \cup \widetilde{a}'_i$ bounds a disk in $X_{a'}$.

One has to notice that $\phi$ can be extended as a map $I \times D^2 \to I \times D^2$ so that $\phi_{\partial(I \times D^2)}= \id_{\partial(I \times D^2)}$. In fact, the nonseparating meridians $ \widetilde{a_i} \cup \gamma_i$ of $X_a$ are sent by the map $\phi$, extended as the identity on $\partial (I \times D^2)$, to the nonseparating meridians $ \widetilde{a}'_i \cup \gamma_i$ of $X_{a'}$. We will denote this extension as $\phi$ as well.

Let $\rho'=\phi^{-1}\circ \rho \circ \phi.$ Both $\rho$ and $\rho'$ are involutions on $X_a$, which is a genus $n$ handlebody. 
Notice that $\rho$ and $\rho'$ agree on $\partial X_a$.
Hence, as a consequence of results in \cite{PantaleoniPiergallini}, (see also \cite{McCulloughMillerZimmermann}), there exists a PL homeomorphism $f:X_a \to X_a$ such that $\rho= f^{-1} \circ \rho' \circ f$.    

By \Cref{prop: handlebody}, there exists $\overline{f}: X_a \to X_a$ that agrees with $f$ on $\partial X_a$ and that is $\rho'$-equivariant. Hence $\rho = f^{-1} \circ \rho' \circ f = (\overline{f}^{-1} \circ f)^{-1} \circ \rho' \circ (\overline{f}^{-1} \circ f) $
and $ \psi \coloneqq \overline{f}^{-1} \circ f$ is the identity on the boundary and can therefore be extended to the rest of $I \times D^2$, i.e. on $N_a$, as the identity. We still call this extension $\psi$. Then $ H \coloneqq\phi \circ \psi $ sends $a$ to $a'$ and $H \circ \rho = \rho \circ H$ and is therefore the required PL homeomorphism. 
\end{proof}

We are ready to show that the equivariant closure map only depends on the palindromic braids and not on the choice of representatives.
\begin{prop}
Let $\alpha$ and $\beta$ be palindromic $n$-braids. Then for any choice of symmetric representatives for $\alpha$ and $\beta$ the resulting strongly involutive links are equivariantly isotopic.
\end{prop}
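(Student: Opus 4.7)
The plan is to construct, for any two pairs $(a,b)$ and $(a',b')$ of symmetric representatives of $(\alpha,\beta)$, a $\tau$-equivariant PL homeomorphism $H:S^3 \to S^3$ sending the equivariant closure of $(a,b)$ to that of $(a',b')$, and then to upgrade the resulting Sakuma equivalence to an equivariant isotopy by the criterion cited from Proposition~2.4 of \cite{LobbWatson} in the introduction. By \Cref{thm: uniqueness} applied twice, there exist $\rho$-equivariant PL homeomorphisms $H_a, H_b : I \times D^2 \to I \times D^2$, each the identity on the boundary, with $H_a(a) = a'$ and $H_b(b) = b'$.

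Under the translational identifications of $I \times D^2$ with the cylinders $B_1, B_2 \subset \R^3$ used in \Cref{dfn: eqclosure}, a direct coordinate check shows that $\rho$ is intertwined with the restriction $\tau|_{B_i}$ (both act as $(x,y,z) \mapsto (-x,y,-z)$). Hence the transferred maps, which I still denote $H_a : B_1 \to B_1$ and $H_b : B_2 \to B_2$, are $\tau$-equivariant. Define $H : S^3 \to S^3$ by $H|_{B_1} = H_a$, $H|_{B_2} = H_b$, and the identity elsewhere; this is well defined and PL because $H_a, H_b$ fix the boundaries of their domains pointwise, and it is $\tau$-equivariant because the three pieces $B_1$, $B_2$, $S^3 \setminus (B_1 \cup B_2)$ are each $\tau$-invariant. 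By construction $H$ sends $a$ to $a'$, $b$ to $b'$, and fixes every connecting arc $\gamma_i$ and $\tau(\gamma_i)$ (which lie outside $B_1 \cup B_2$), while preserving all orientations since it is the identity on the endpoints of every strand; hence $(\widehat{ab},\tau)$ and $(\widehat{a'b'},\tau)$ are Sakuma equivalent via $H$.

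To conclude, I would verify that $H$ preserves the orientation of $\ax$, the only extra hypothesis needed to invoke the \cite{LobbWatson} criterion. Outside $B_1 \cup B_2$ the map $H$ is the identity on $\ax$, so the only nontrivial behaviour on $\ax$ occurs on the two arcs $\ax \cap B_i$; each is mapped to itself since $H$ commutes with $\tau$ and therefore preserves $\Fix(\tau)$ setwise, and the endpoints of each arc lie in $\partial B_i$ where $H$ is the identity, so $H$ restricts to an orientation-preserving PL self-homeomorphism of each arc. The main potential subtlety is the coordinate bookkeeping: confirming that the identifications $I \times D^2 \cong B_i$ genuinely intertwine $\rho$ with $\tau|_{B_i}$ so that \Cref{thm: uniqueness} transfers faithfully to a $\tau$-equivariant statement on $B_i$, and verifying the orientation condition on $\ax$ needed for the Lobb--Watson upgrade; neither step is deep, but both are essential.
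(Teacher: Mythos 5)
Your proof is correct and follows essentially the same route as the paper: apply \Cref{thm: uniqueness} to get a $\rho$-equivariant map of the cylinder fixing its boundary, extend by the identity to a $\tau$-equivariant PL homeomorphism of $S^3$, and invoke the Lobb--Watson criterion after checking that the orientation of $\ax$ is preserved. The only cosmetic difference is that you change $a$ and $b$ simultaneously while the paper changes one representative at a time; your extra care in verifying the coordinate identification and the orientation condition on $\ax$ is welcome but not a substantive departure.
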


\begin{proof}
Suppose that $a$ and $a'$ are two different representatives of $\alpha$ (the case where we pick two different representatives for $\beta$ is analogous), and let $b$ be a symmetric representative of $\beta$. Call $(L,\tau)$ the strongly involutive link obtained by equivariant closure of $a$ and $b$, and $(L',\tau)$ the one obtained by equivariant closure of $a'$ and $b$. Then let $H$ be the PL homeomorphism prescribed by \Cref{thm: uniqueness}. One can extend $H$ to be the identity outside of $I \times D^2$ and therefore $H$ is a $\tau$-equivariant PL homeomorphism on $S^3$ that sends $L$ to $L'$. 
Since $H$ preserves the orientation of the fixed point set $\ax$, by the proof of Proposition 2.4 in \cite{LobbWatson}, $(L,\tau)$ and $(L',\tau)$ are equivariantly isotopic. 
\end{proof}

\section{An equivariant Alexander theorem} \label{sec: Alex}
After defining the equivariant closure map in \Cref{sec: palindromi}, in this section we would like to show that every strongly involutive link is Sakuma equivalent to the equivariant closure of two palindromic braids. 
What we will actually prove is that every strongly involutive link $(L,\tau)$ is equivariantly isotopic to the equivariant closure of two palindromic braids. Recall the definition of Sakuma equivalence and equivariant isotopy from \Cref{def: equivalencesintro}. Here $\tau:S^3 \to S^3$ is the $\pi$-rotation around the axis $\ax$ (as defined in \Cref{sec: firstdef} and used in \Cref{sec: palindromi}). Recall that we supposed that $\ax \setminus \{\infty\}$ is the $y$-axis in $\R^3$.

Since, as observed in \Cref{sec: firstdef}, every strongly involutive link is Sakuma equivalent to a strongly involutive link where the involution is $\tau$, this will imply the desired result.

From now on, if we do not indicate the involution of a strongly involutive link, we will assume that it is $\tau$.

Let $L$ be a strongly involutive link. 
Up to a small equivariant perturbation of the link $L$, there exists a plane $\kappa$ in $\R^3$ containing the $y$-axis such that the orthogonal projection on $\kappa$ gives a diagram for the strongly involutive link $L$. One can suppose without loss of generality that the plane $\kappa$ corresponds to the $(x,y)$-plane in $\R^3$ and we will sometimes refer to it as $\R^2$. 
Notice that $\tau$ preserves $\kappa=\R^2$ and acts on it as the reflection along the $y$-axis.

Let $\bx$ be the union of the $z$-axis in $\R^3$ and $\{\infty\}$. We can suppose up to small equivariant perturbations that no projection of an edge of $L$ lies on a line passing through $0\in \R^2$. Hence every edge of $L$ inherits a sign: positive when the projection is oriented so that it goes counterclockwise with respect to $0$ and negative otherwise. 

From now on we will always suppose that our links project to the $(x,y)$-plane as stated above. As a consequence it will make sense to consider the number of negative edges as associated to the link (as subset of $\R^3$) instead of to the diagram.
Notice that since $\tau$ reverses the orientation of $L$, if $e$ is a positive (respectively negative) edge of $L$, then $\tau(e)$ is a positive (respectively negative) edge of $L$.

Notice that if $L$ has no negative edges, then $L$ is in equivariant braid position with respect to $\bx $. 
We will find it useful to define the following complexity function: 
\begin{dfn}
Given a diagram of a strongly involutive link $L$, defined as above, we define $h(L)$ to be the number of negative edges of $L$.
\end{dfn}

\begin{rmk} \label{rmk: braidclosure}
Observe that, up to equivariant planar isotopy that fixes $0 \in \R^2$, 
we can suppose that all the crossings in the diagram of a strongly involutive link $L$ with $h(L)=0$ lie inside the projections of the balls $B_1$ and $B_2$ in \Cref{dfn: eqclosure}. As a consequence, all strongly involutive links with $h=0$ are equivariantly isotopic in the exterior of $\bx$ to the equivariant closure of two palindromic braids.
Our strategy to prove the equivariant Alexander theorem will be to show that up to equivariant isotopy we can reduce the number of negative edges. 
\end{rmk}

We now introduce a move on strongly involutive links that decreases the value of the function $h$.
\begin{dfn}
Let $e=p_0p_m$ be a negative edge of $L$ and let $p_1,\ldots,p_{m-1} \in \operatorname{Int}(e)$ such that $p_i p_{i+1}$ is negative for all $i =0,\ldots, m-1$. Let $q_1,\ldots,q_m \notin e$, such that the $p_iq_{i+1}$ and $q_{i}p_i$ are positive.
Then we define the move $\Stil_{p_0,\ldots,p_m}^{q_1,\ldots,q_m}\coloneqq\Etil_{p_{m-1},p_m}^{q_m} \circ \cdots \circ \Etil_{p_{0},p_1}^{q_1},$
whenever the right-hand side is well-defined. We call this move a \emph{sawtooth move} (see \Cref{fig:stil} for an example).
\end{dfn}
Notice that applying the moves on the right-hand hand side in any order gives as a result the same link.

\begin{dfn}
 A sawtooth move $\Stil_{p_0,\ldots,p_m}^{q_1,\ldots,q_m}\coloneqq\Etil_{p_{m-1},p_m}^{q_m} \circ \cdots \circ \Etil_{p_{0},p_1}^{q_1} $ is \emph{admissible} if $\Etil_{p_i,p_{i+1}}^{q_{i+1}}$ is admissible on the link it is applied to and if for every $i\neq j$ with $i,j = 0,\ldots,m-1$ the set $ [p_i,p_{i+1},q_{i+1}] \cap \tau ([p_j,p_{j+1},q_{j+1}])$ is empty. 

\begin{figure}
    \centering
\includegraphics[width= \textwidth]{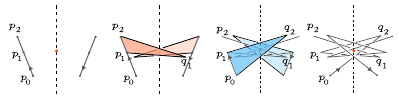}
    \caption{A sawtooth move.}
    \label{fig:stil}
\end{figure}

\end{dfn}
Notice that if $\Stil_{p_0,\ldots,p_m}^{q_1,\ldots,q_m} $ is admissible then $L$ and $\Stil_{p_0,\ldots,p_m}^{q_1,\ldots,q_m}(L) $ are equivariantly isotopic.
\begin{rmk}
Observe that there was some abuse of notation in the previous definition, since the points $p_i$'s, for $i=1, \dots, m-1$, are not vertices of $L$, while the $\Etil$ moves can only be applied to consecutive vertices. We are implicitly assuming that performing a sawtooth move comprises the operation of adding all the $p_i$'s to the vertices of $L$.
\end{rmk}

Notice that a sawtooth move replaces two negative edges with many positive ones, and therefore the value of the  function $h$ decreases by $2$. \\ The following lemma shows that if a strongly involutive link has a negative edge, then one can find an admissible sawtooth move on $e$. As a result, this shows that it is always possible to put a strongly involutive link in equivariant braid position.

\begin{lem}\label{lem: sawtooth}
Let $L$ be a strongly involutive link.
Given a negative edge $e=ab$ of $L$, there is an admissible sawtooth move $\Stil_{p_0,\ldots,p_n}^{q_1,\ldots,q_n}$ so that $p_0=a$ and $p_n=b$. Moreover, if $\Omega\subset \R^3$ is a compact convex set such that $\Omega \cap e = \emptyset$ and $\tau(\Omega)=\Omega$, we can pick the points $p_i$ and $q_i$ so that the triangles $[p_i,p_{i+1},q_{i+1}]$ and $\tau([p_i,p_{i+1},q_{i+1}])$ do not intersect $\Omega$. 
\end{lem}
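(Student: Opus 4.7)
The plan is to construct the sawtooth explicitly by subdividing $e$ into many short pieces and choosing each tent-pole $q_{i+1}$ with projection near the origin on the side opposite to the midpoint of $\pi([p_i,p_{i+1}])$, and $z$-coordinate chosen inductively in a generic fashion.

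I would begin with a small preliminary equivariant isotopy of $L$, supported in a neighbourhood of $e$ and disjoint from $\Omega$, arranging that the interior of $e$ lies in the open half-space $\{z>0\}$; this is compatible with $e$ having one endpoint on $\ax$, since the axis lies in $\{z=0\}$. Let $\rho=d(0,\pi(e))>0$, fix a large integer $m$, and set $p_i=a+\tfrac{i}{m}(b-a)$ for $i=0,\dots,m$. For each $i$ let $c_i$ be the midpoint of $\pi([p_i,p_{i+1}])$, and set $\pi(q_{i+1}):=-\delta\,c_i/|c_i|$ for a small $\delta\in(0,\rho/2)$. For $m$ large enough, the projected triangle $\pi(T_i)$ contains the origin in its interior, which forces $[p_i,q_{i+1}]$ and $[q_{i+1},p_{i+1}]$ to be positive; taking $m$ large also ensures that each subsegment $[p_i,p_{i+1}]$ contains no interior crossings in the diagram.

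The remaining freedom is in the heights $Z_i:=z(q_{i+1})>0$, chosen inductively over $i$. With all $Z_i>0$, each triangle $T_i=[p_i,p_{i+1},q_{i+1}]$ lies in $\{z\ge0\}$, meeting $\{z=0\}$ only at a possible endpoint $p_0=a$ or $p_m=b$ that lies on $\ax$; by $\tau$-equivariance $\tau(T_j)\subset\{z\le0\}$, so $T_i\cap\tau(T_j)=\emptyset$ for $i\ne j$, and $T_i\cap\ax$ is either empty or an endpoint of $[p_i,p_{i+1}]$ on $\ax$, which is exactly the admissibility condition for $\Etil_{p_i,p_{i+1}}^{q_{i+1}}$. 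The $\tau$-invariance of $\Omega$ reduces $\tau(T_i)\cap\Omega=\emptyset$ to $T_i\cap\Omega=\emptyset$. Hence I only need to choose each $Z_i$ to avoid the finitely many obstructions: edges of $L\setminus[p_i,p_{i+1}]$, the set $\Omega$, and previously placed triangles $T_k$ for $k<i$.

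The main obstacle I expect is verifying that the set of bad values of $Z_i$ has empty interior. For each fixed compact obstacle $K$, a point $r\in K$ can lie in $T_i(Z_i)$ for at most one value $Z_i=Z_i(r)$: solving the barycentric equation $\pi(r)=\alpha\pi(p_i)+\beta\pi(p_{i+1})+\gamma\pi(q_{i+1})$ for $\alpha,\beta,\gamma$ and then setting $Z_i(r):=(z(r)-\alpha z(p_i)-\beta z(p_{i+1}))/\gamma$. The coefficient $\gamma$ is bounded away from $0$ on the compact set $K\cap\pi^{-1}(\pi(T_i))$ as soon as $[p_i,p_{i+1}]$ contains no interior crossings, so $Z_i(r)$ is bounded on this set. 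Therefore the bad set of values of $Z_i$ corresponding to $K$ is a bounded closed subset of $(0,\infty)$, and taking $Z_i$ larger than the supremum of the finitely many such bad sets at each inductive step produces the desired admissible sawtooth move $\Stil_{p_0,\dots,p_m}^{q_1,\dots,q_m}$.
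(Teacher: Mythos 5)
There is a genuine gap in two places, and the approach as written cannot be completed.

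First, the preliminary ``small equivariant isotopy arranging $\operatorname{Int}(e)\subset\{z>0\}$'' is not available. The edge $e$ is a fixed straight segment whose endpoints can perfectly well lie on opposite sides of the plane $\{z=0\}$, and then no isotopy makes $\operatorname{Int}(e)\subset\{z>0\}$ while keeping $e$ the straight segment from $a$ to $b$ (which your formula $p_i=a+\frac{i}{m}(b-a)$ requires). Even if one allowed $e$ to bend, pushing it across $\{z=0\}$ would in general drag it through the over-strands at its crossings, so the isotopy is neither small nor supported where you want, nor is it automatically expressible in the allowed equivariant moves on the specific link on which the lemma is later invoked (e.g.\ inside \Cref{prop: isolatemax}). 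This is fatal because your verification of admissibility --- $T_i\subset\{z\ge0\}$, $\tau(T_j)\subset\{z\le0\}$, hence $T_i\cap\tau(T_j)=\emptyset$ --- rests entirely on this half-space split. The paper never uses such a split; it instead controls $T_i\cap\ax$ directly via \Cref{lem: convex}, choosing the poles in one half $hS$ of the sheaf of lines parallel to $\bx$ through $e$ (slightly rotated), with a further Case~3 devoted to the delicate situation in which $e$ and $\tau(e)$ project to a crossing.

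Second, the claim that ``taking $m$ large also ensures that each subsegment $[p_i,p_{i+1}]$ contains no interior crossings'' is false: the finitely many points of $e$ projecting to double points are fixed, and each lies in the interior of some $[p_i,p_{i+1}]$ however fine the uniform subdivision. This matters because at such a crossing the boundedness of the bad set of $Z_i$-values breaks down. If a strand $f$ passes \emph{over} $e$ at $x\in(p_i,p_{i+1})$, so the point $r_0\in f$ with $\pi(r_0)=\pi(x)$ has $z(r_0)>z(x)$, then as $r\in f$ tends to $r_0$ from the side where $\pi(r)$ enters $\pi(T_i)$ one has $\gamma(r)\to0^+$ while the numerator $z(r)-\alpha z(p_i)-\beta z(p_{i+1})$ tends to $z(r_0)-z(x)>0$; hence $Z_i(r)\to+\infty$, and the bad set for this obstacle contains a ray $[M,\infty)$. ``Taking $Z_i$ larger than the supremum'' is therefore impossible. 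This is precisely why the paper's Case~2 lets the full direction of the pole near a crossing vary --- rotating, inside the plane through $x$ and $\bx$, the vertical half-line at $x$ --- rather than fixing the horizontal position of the pole and varying only its height.
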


\begin{proof}

\textbf{Case 1:} suppose first that there are no points in 
$e$ that project to a double point in the diagram.

\begin{figure}
    \centering
    \includegraphics[width=0.32 \textwidth]{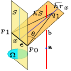}
    \caption{Case 1 of the proof of \Cref{lem: sawtooth}.}
    \label{fig: Stilnocross}
\end{figure}
For this part of the proof we refer to \Cref{fig: Stilnocross}.
 Consider the set $S$ defined as the union over all points $y $ in $e$ of lines $r_y$ parallel to $\bx$ and containing $y$.
 This set does not intersect $L\setminus e$, since there are no points of $e$ that project to a double point. Moreover, if $S$ intersects $\ax$, then the intersection must be an endpoint of $e$. In fact, if $S \cap \ax$ is non-empty, then $S \cap \tau(e)$ is also non-empty. By the previous observation we must have that $S \cap \tau(e) $ lies in $e$. This is possible if and only if $e$ and $\tau(e)$ are consecutive edges and their common endpoint is a fixed point of $\tau$.  
 
 The set $S$ is split by $e$ in two components. Since $\Omega$ is convex and $e \cap \Omega = \emptyset$, at least one of the two components of $S \setminus e$ does not intersect $\Omega$. Let $hS$ be one such component. Notice that this is a union of half-lines $hr_y$, such that $hr_y \subset r_y$ and $hr_y$ has $y$ as endpoint. 
 Let $x \in \operatorname{Int}(e)$, and let $\alpha_x$ be the plane containing $x$ and $\bx$. For all $y \in e$, let $\alpha_y$ be the plane parallel to $\alpha_x$ and containing $y$.
 Take a rotation of a small angle $\theta$ centered in $y$ of each $hr_y$ inside $\alpha_y$, and such that the image of $hr_x$ via this rotation, that we call $\widetilde{hr}_x$, has non-empty intersection with  $\bx$. 
 Call the image of $hS$ via this rotation $\widetilde{hS}$.
 If $\theta$ is small enough, since $L$ and $\Omega$ are compact, $ \widetilde{hS}$ does not intersect $L \setminus e$ nor $\Omega$.
 The axis $\ax$ is transverse to the planes $\alpha_y$ for every $y \in e$, and the intersection $\ax \cap \alpha_y$ consists of one point. This means that for $\theta$ small enough, $\widetilde{hS} \cap \ax$ is either empty or equal to one endpoint of $e$ (when this is a fixed point of $\tau$). Let $p_0=a$ and $p_1=b$. Now take $q_1$ on $\widetilde{hr}_x$ such that $[p_0,p_1,q_1]$ intersects $\bx$ in a point. Observe that $ p_0q_1$ and $q_1p_1$ are positive. Then $\Stil_{p_0,p_1}^{q_1}$ is admissible and the triangles $[p_0,p_1,q_1]$ and $\tau([p_0,p_1,q_1])$ do not intersect $\Omega$, as required.

\textbf{Case 2:} suppose now that there exists a point $x \in e$ that projects to a crossing with some edge $f$ of $L$. Let $n>0$ be the number of such points. Suppose first that any such $f$ is not $\tau(e)$. For this part of the proof we refer to \Cref{fig: Stilcrostot}.

The plane $\alpha$ in $\R^3$ containing $x$ and $\mathbf{b}$ intersects the link in a finite number of points and the axis $\mathbf{a}$ only in $0$. The line $r$ in $\alpha$ containing $x$ and parallel to $\mathbf{b}$ meets $L$ exactly in one point, which lies in $f$. 

Let $hS$ be as in the previous case and let $hr$ be the half-line $r \cap hS$.
\begin{figure}[ht]
    \centering
    \begin{subfigure}[t]{0.4\textwidth}
    \centering
\includegraphics[width=0.8\textwidth]{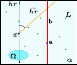}
\caption{The half-line $\widetilde{hr}$}\label{fig: Stilcross}
    \end{subfigure} 
 \hfill
\begin{subfigure}[t]{0.4\textwidth}
        \centering
\includegraphics[width=0.8\textwidth]{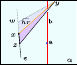}
\caption{The triangle $[y,z,w]$.}
\label{fig: Stilcross2}
    \end{subfigure} 

    \caption{The second case in the proof of \Cref{lem: sawtooth}, when there exists $x \in e$ that projects to a double point in the diagram.}
    \label{fig: Stilcrostot}
\end{figure}

Take a small rotation $\widetilde{hr}$ of $hr$ inside $\alpha$ centered in $x$, so that the new half-line $\widetilde{hr}$ intersects $\bx$. If the rotation is small enough, since $\Omega$ is compact, this half-line will still be disjoint from $\Omega$ and will generically avoid $L$ and $\ax$. See \Cref{fig: Stilcross} for a picture.

Let $y $ be a point in $ \widetilde{hr}$ such that the segment $[x,y]$ has nonempty intersection with $\bx$. Then fix $z,w \in e$ such that $x \in [z,w]$ and $zw$ is negative. If we pick $z,w$ close to $x$, then the triangle $[y,z,w]$ does not intersect the link $L$, the convex set $\Omega$, nor the axis $\ax$. One can easily see that $zy$ and $yw$ are positive segments. 

This triangle will be a ``tooth" of the sawtooth move. More precisely, if there are no other points in $e=ab$ that project to a crossing, call $a=p_0$, $z=p_1$, $w=p_2$, $b=p_3$ and $y=q_2$. We can apply the same procedure described in Case 1 successively on the segments $az$ and $wb$, finding points that we call respectively $q_1$ and $q_3$. We pick $q_1$ and $q_3$ in the set obtained by a rotation of a very small angle of $hS \cap \bigcup_{x\in[a,z]}r_x$ and $hS \cap \bigcup_{x\in[w,b]}r_x $ respectively.  
Since $e$ and $\tau(e)$ do not project to a crossing, $hS \cap \tau(hS) \subset e \cap \tau(e)$, and this implies that if we pick the rotations to be small enough, $[p_i,p_{i+1},q_{i+1}] \cap \tau([p_j,p_{j+1},q_{j+1}])$ is empty if $i \neq j$, and contained in $e \cap \tau(e)$ if $i=j$. 
Hence the move $\Stil_{p_0,p_1,p_2,p_3}^{q_1,q_2,q_3}$ is admissible on $L$. Otherwise, if $n>1$, the segments $az$ and $wb$ have at most $n-1$ points that project to a double point, and we can therefore iterate this procedure. 

 \textbf{Case 3:} we are left to find an admissible sawtooth move when there exists $x$ in $e$ that projects to a double point with $\tau(e)$ in the diagram. Notice that in this case the plane $\alpha$ containing $x$ and $\bx$ contains $\ax$ as well (see \Cref{{fig: Stilsym}}). 
\begin{figure}
    \centering
    \includegraphics[width=0.35\textwidth]{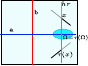}
    \caption{The final case of the proof of \Cref{lem: sawtooth}, when $x$ and $\tau(x)$ project to the same point in the diagram.}
    \label{fig: Stilsym}
\end{figure}

Let $S$ be as in the first case, let $r$ be the line parallel to $\bx$ containing $x$ and let $hr$ be the half-line contained in $ r$ with endpoint $x$ that does not intersect $\ax$. Let $hS$ be the closure of the component of $S\setminus e$ containing $hr$.

Suppose first that $hS \cap \Omega= \emptyset$. 
Hence $hr \cap \Omega$ is empty and we can proceed to find points $z,w,y$ and to conclude the proof in this case as in Case 2. 

If $hS \cap \Omega \neq \emptyset$, notice that $hr \cap \Omega = \emptyset$. In fact if that was not the case, since $\Omega = \tau(\Omega)$ then $\tau(hr) \cap \Omega \neq \emptyset$. Since $\Omega$ is convex and $ \tau (hr) \subset r \setminus hr$, that would mean that $ x \in \Omega$. Let $y,z,w$ be as in Case 2.

Call $H$ the half-plane in $\R^2$ containing $0$ and delimited by the line containing $\pi(\tau(e))$.
Notice that one and only one of $\pi([a,z])$ and $\pi([w,b])$ are contained in $H$. Suppose that $\pi([w,b]) \subset H$, the other case being analogous.

Let $S_z= \bigcup_{t \in [a,z]} r_t$ and $S_w= \bigcup_{t \in [w,b]} r_t$. 
Up to changing $z$ with another point closer to $x $, and $y$ with another point in $[y,z,w]$ closer to $\bx$ (that we will still call $z$ and $y$) we can suppose that $S_w \cap \tau([y,z,w]) = \emptyset$ (see \Cref{fig: riducitriangolo}). 

\begin{figure}
    \centering
    \includegraphics[width=0.25 \textwidth]{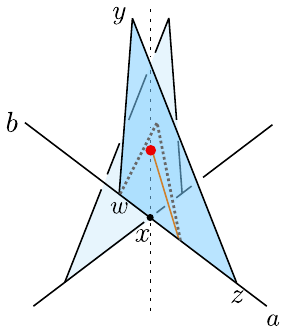}
    \caption{Up to changing $z$ with another point closer to $x $, and $y$ with another point in $[y,z,w]$ closer to $\bx$ we can suppose that $S_w \cap \tau([y,z,w]) = \emptyset$. One just needs to pick the new point close to $x$ such that the orange segment in the picture connecting it to $\bx$  does not intersect $\tau([b,w])$. Then if one picks $y$ close to $\bx$ we have the desired property.}
    \label{fig: riducitriangolo}
\end{figure}

Notice that $S_z \cap \tau([y,z,w]) = \emptyset$.  In fact $ \pi(S_z)=\pi([a,z]) \subset \R^2 \setminus H$, while $ \pi(\tau([y,z,w])) $ is contained in $ H$ since $ \tau([z,w]) \subset \tau(e)$ and $\pi(y)$, and therefore $\pi(\tau(y))$ are very close to $0 \in H$, as shown in \Cref{fig:separati}. 

\begin{figure}
    \centering
    \includegraphics[width=0.3 \textwidth]{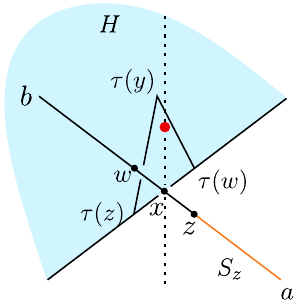}
    \caption{The sets $S_z$ and $\tau([y,z,w])$ are disjoint.}
    \label{fig:separati}
\end{figure}
Hence we can proceed as in the previous cases to construct the other teeth of the sawtooth on $[a,z]$ and $[w,b]$ on sets obtained by rotations of small angles of $S_z$ and $S_w$. More precisely, we pick $p_0,\ldots,p_k \in [a,z]$ and $p_{k+1},\ldots,p_m \in [w,b]$
such that $p_0p_k=az$ and $p_{k+1}p_m=wb$, $q_1,\ldots,q_m$, such that $q_{k+1}=y$ and such that $ p_iq_{i+1}$ and $q_{i+1}p_{i+1}$ are positive for $i=0,\ldots,m-1$, and such that, for $i \neq k$ the point $q_{i+1}$, lies in a small rotation (in the sense of the previous cases) of either $S_z$ or $S_w$.

Moreover we construct them so that 
$ \Stil_{p_0,\ldots,p_k}^{q_1,\ldots,q_k}$ and $ \Stil_{p_{k+1},\ldots,p_m}^{q_{k+1},\ldots,q_n}$ are admissible on $L$.
If the rotation angles are small enough, $[p_i,p_{i+1},q_{i+1}]\cap\tau([y,z,w])$ is empty for $i=0,\ldots,m-1$.
Hence $\Stil_{p_0,\ldots,p_m}^{q_1,\ldots,q_m} $ is an admissible sawtooth move on $e$.
\end{proof}

\begin{thm}[Equivariant Alexander theorem] \label{thm: Alexander}
The strongly involutive link $(L,\tau)$ is equivariantly isotopic to the equivariant closure of two palindromic braids $\alpha$ and $\beta$. 
\end{thm}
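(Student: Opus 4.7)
The plan is to prove the theorem by induction on the complexity $h(L)$ introduced above, with \Cref{lem: sawtooth} providing the reduction step.

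\textbf{Base case, $h(L)=0$.} Every edge of $L$ projects counterclockwise around $0\in\R^2$, so $L$ is in equivariant braid position with respect to $\bx$. By \Cref{rmk: braidclosure} we may, after an equivariant planar isotopy fixing $0$, assume that all crossings of the diagram lie inside the projections of the balls $B_1,B_2$ appearing in \Cref{dfn: eqclosure}. A direct calculation using $\tau(x,y,z)=(-x,y,-z)$ shows $\tau(B_i)=B_i$ for $i=1,2$, so the tangles $a\coloneqq L\cap B_1$ and $b\coloneqq L\cap B_2$ are $\tau$-invariant. Under the translation identification of each $B_i$ with $I\times D^2$, the restriction of $\tau$ pulls back precisely to the involution $\rho$ of \Cref{rmk: palindromicity}; hence $a$ and $b$ are symmetric representatives of palindromic braids $\alpha,\beta$, and the equivariant closure of $\alpha$ and $\beta$ reproduces $L$ equivariantly by construction.

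\textbf{Inductive step.} Assume $h(L)>0$ and the statement holds for all strongly involutive links of strictly smaller complexity. Pick any negative edge $e$ of $L$ and apply \Cref{lem: sawtooth} (with $\Omega=\emptyset$, say) to obtain an admissible sawtooth $\Stil_{p_0,\ldots,p_m}^{q_1,\ldots,q_m}$ on $e$; let $L'$ denote the resulting link. Since $\Stil$ is by definition a composition of admissible $\Etil$ moves, \Cref{lem: equivalence} gives an equivariant isotopy $L\simeq L'$. Each factor $\Etil_{p_i,p_{i+1}}^{q_{i+1}}$ simultaneously replaces the sub-segment $[p_i,p_{i+1}]$ of $e$ and its $\tau$-image inside $\tau(e)$ by the two positive edges $p_iq_{i+1}$, $q_{i+1}p_{i+1}$ together with their $\tau$-images. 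Under the genericity assumption that no edge projects along a line through $0$, no edge of $L$ is contained in $\ax$, so $\tau(e)\neq e$ and the two original negative edges $e,\tau(e)$ are genuinely distinct. Consequently $h(L')=h(L)-2$, possibly after a small equivariant perturbation to restore the diagrammatic genericity (which does not change $h$). The inductive hypothesis furnishes palindromic braids whose equivariant closure is equivariantly isotopic to $L'$, hence to $L$.

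The substantive content is concentrated in \Cref{lem: sawtooth}, whose case analysis handles the three ways a negative edge can obstruct the naive ``push past $\bx$'' operation; once that lemma is granted the induction is bookkeeping. The only real care needed is in the base case, namely checking that the two $\tau$-invariant tangles in $B_1$ and $B_2$ truly assemble to the equivariant closure in the sense of \Cref{dfn: eqclosure}, which follows because \Cref{thm: uniqueness} shows that the choice of symmetric representative is immaterial up to equivariant isotopy relative to the boundary.
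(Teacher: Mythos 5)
Your proof follows exactly the same strategy as the paper's: induct on the complexity $h(L)$, with the base case $h(L)=0$ handled by \Cref{rmk: braidclosure} and the reduction step supplied by \Cref{lem: sawtooth}. Your write-up merely spells out in more detail why the $\tau$-invariant tangles in $B_1,B_2$ are symmetric representatives (the paper compresses this to the observation $\tau(\alpha)=\overline\alpha$), and notes explicitly that $h$ drops by $2$; the substance is identical.
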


\begin{proof}
 If $h(L)=0$, we observed in \Cref{rmk: braidclosure} that the link $L$ is equivariantly isotopic to the equivariant closure of two palindromic braids $\alpha$ and $\beta$. Since $L=\tau(L)$, we must have that $\alpha=\tau(\alpha)$ and $\beta=\tau(\beta)$. Since $\tau(\alpha)= \overline{\alpha}$ and $\tau(\beta)= \overline{\beta}$, we get the conclusion in this case.
If $h(L)>0$, let $e$ be a negative edge of $L$. By \Cref{lem: sawtooth} we can find an admissible sawtooth move on $e$ and decrease $h(L)$. We conclude the proof by induction on $h(L)$.
\end{proof}


\section{An equivariant Markov theorem} \label{sec: Markov}

The goal of this section is to give a proof of an equivariant Markov theorem for strongly involutive links. 

We will first show that, if two strongly involutive links $L$ and $L'$ are equivariantly isotopic, and $h(L)=h(L')=0$, then $L'$ can be obtained from $L$ via a finite set of operations (see \Cref{thm: markovlink}). 

When $L=\widehat{\alpha\beta}$ and $L'=\widehat{\gamma\delta}$ we will interpret the operations described in \Cref{thm: markovlink} in terms of operations on palindromic braids. As a consequence we will obtain \Cref{thm: eqmarkov}, that gives necessary and sufficient conditions for two pairs of palindromic braids to have equivariant closures that are equivariantly isotopic.

The steps that we will take for the proof are the following:

 \textbf{Step 1:} We will start by defining some new operations on the set of strongly involutive links. These operations will be of four types: $\Rcal,\Rtil, \W, \Wtil$. Moves of type $\Rcal$ and $\Rtil$ are just moves of type $\D$ and $\Etil$ respectively, that substitute positive edges with other positive edges, hence preserving $h$. Moves of type $\Wtil, \W $ are as depicted in \Cref{fig:Wtil} and \Cref{fig:Wmove}, and they also replace positive edges with other positive edges. 
    Hence all these operations preserve the value of $h$.

 \textbf{Step 2:} We will show that the operations introduced in the previous step can be decomposed in other moves of the same types, with some special properties on their support. 

 Roughly speaking, we will show: 
    \begin{enumerate}[label=(\roman*)]
        \item the diameter of the support of the $\Rcal$ and $\Rtil$ moves can be supposed to be arbitrarily small. This will be done in \Cref{lem: reduceRtil} and \Cref{lem: reduceR}.
        \item that the support of the $\W$ and $\Wtil$ moves can be supposed to be disjoint from the link. In this case we say that the move is empty. 
        Moreover we show that the projection on the plane of the support of an empty $\W$ or $\Wtil$ move is disjoint from the projection of the link. See \Cref{prop: empty} and \Cref{prop: really empty}. 
\end{enumerate}
 
 This will help us in the following steps, since we will be able to assume that these moves have the above properties. Moreover we will also be able to identify these moves as actual operations on palindromic braids (see Step 4).
 
    \textbf{Step 3:} In this step we will prove \Cref{thm: markovlink}. Let $L$ and $L'$ be strongly involutive, and suppose that $L'$ can be obtained by $L$ after applying a finite set of admissible moves of type $\D, \Etil, \Rcal, \Rtil, \W,\Wtil, \Stil$ or their inverses. We want to show that we can find another set of the above moves so that $L=L_0 \to L_1\to \cdots \to L_{n-1} \to L_n=L'$, (here every arrow symbolizes the application of one of these moves), and the maximum value of $h$ is attained in one of the extremes of this sequence. When $h(L)=h(L')=0$ this implies that $h(L_i)=0$ for $i=1,\ldots,n$, thus proving \Cref{thm: markovlink}.
    This will be proved in two substeps:
    \begin{enumerate}[label=(\roman*)]
        \item We show in \Cref{prop: isolatemax} that we can modify the sequence $L=L_0 \to L_1\to \cdots \to L_n = L' $ so that if $h(L_i)>0$, then $h(L_{i+1})\neq h(L_i)$. Moreover we can suppose to do that without increasing the maximum value of $h$ attained by this sequence. As a consequence we can suppose that the $i$'s where $h$ is maximal for this sequence are isolated, unless the maximum is equal to $0$. 
        \item We show that we can reduce any local maxima for $h$ in the sequence. More precisely, when one has a piece of the sequence $L_{i-1}\to L_i \to L_{i+1} $, with $h(L_i)>h(L_{i-1})$ and $h(L_i)>h(L_{i+1} )$, the sequence can be modified to $L_{i-1} \to L'_1\to \cdots \to L'_k \to L_{i+1}$ so that $h(L'_j)<h(L_i)$ for $j=1,\ldots,k$. By the previous substep we can assume that every maximum for $h$ in the sequence is isolated, hence we can apply the described procedure to all the $0<i<n$ such that $h(L_i)$ is maximal. We thus find a new sequence such that the maximum attained value of $h$ is either strictly lower than before, or is achieved by $h(L)$ or $h(L')$. This will be done in \Cref{prop: reducemin}.
    \end{enumerate}

    To conclude, one just needs to realize that we can apply iteratively \Cref{prop: isolatemax} and \Cref{prop: reducemin} to reduce the maximum value of $h$ along the sequence. We therefore obtain a sequence such that the maximum value of $h$ is realized by either $h(L)$ or $h(L')$. 

\textbf{Step 4:} The goal of this final step is to prove \Cref{thm: eqmarkov}. 
Recall that by \Cref{rmk: braidclosure}, if $h(L)=0$, then, up to an equivariant isotopy in the exterior of $\bx$ that modifies the link diagram only up to planar isotopy, $L$ is the equivariant closure of two palindromic braids $ \alpha$ and $\beta$. These braids are not unique but depend on the choice of the equivariant isotopy. We first start by stating how all such possible pairs of palindromic braids are related in \Cref{rmk: nonuniquebraids}. We then proceed to show in \Cref{prop: Rconj} how $\Rcal$ and $\Rtil$ moves affect the underlying palindromic braids.
Finally we just have to observe in \Cref{dfn: Wstab} what effect $\W$ and $\Wtil$ moves have on the palindromic braids to conclude the proof of \Cref{thm: eqmarkov}.

\subsection{Step 1: defining some new operations}
Recall that in \Cref{def: admissibleEtil} we defined the moves of type $\Etil$ and that in \Cref{def: admissibleD} we defined the moves of type $\D$.
\begin{dfn}
If $\widetilde{\mathcal{E}}_{a,c}^b$ (resp. $\D^b_{a,c}$) is admissible and $ab$, $ac$ and $cb$ (resp. $ac$ and $ab$) are positive we call this move $\widetilde{\mathcal{R}}^{b}_{a,c}$ (resp. $\mathcal{R}^{b}_{a,c}$). 
\end{dfn}

\begin{nota}
    From now on we will call $\Etil$ (respectively $\D$) moves only those moves that are not $\Rtil$ (respectively $\Rcal$) moves, i.e. those that contain at least a negative edge. 
\end{nota}

\begin{nota}
    For technical reasons, we will not consider $\Rcal$ moves through $\infty$ as valid moves in what follows.
    This will not affect the proofs since an $\Rcal$ move through $\infty$ can be factorized as a composition of two $\D$ moves (one of them going through $\infty$). Hence from now on all $\Rcal$ moves are not through $\infty$.
\end{nota}

\begin{dfn}
We define the move $\widetilde{\mathcal{W}}_{a,d}^{b,c}=\Etil_{b,d}^c\circ \Etil_{a,d}^b$ whenever $ab,bc,cd,ad,db$ are positive (see \Cref{fig:Wtil}). We say that $ \Wtil_{a,d}^{b,c}$ is admissible if the moves on the right-hand side are admissible.

If $c=\tau(c)$ and $d=\tau(d)$, we define the move ${\W}_{a,d}^{b,c}={\D}_{b,d}^{c}\circ \Etil_{a,d}^b$ whenever $ab,db, ca, bc$ are positive (see \Cref{fig:Wmove}). 
We say that $ \W_{a,d}^{b,c}$ is admissible if the moves on the right hand side are admissible.
If ${\W}_{a,d}^{b,c}$ is admissible and $ {\D}_{b,d}^{c}$ goes through $\infty$, we say that ${\W}_{a,d}^{b,c}$ goes through $\infty$.
\end{dfn} 

\begin{figure}
    \centering
    \includegraphics[width= 0.9 \textwidth]{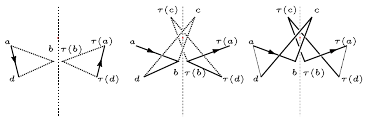}
    \caption{A move of type $\Wtil$.}
    \label{fig:Wtil}
\end{figure}

\begin{figure}
    \centering
    \includegraphics[width= 0.8 \textwidth]{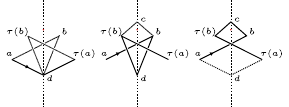}
    \caption{A move of type $\W$.}
    \label{fig:Wmove}
\end{figure}

\begin{rmk}
    Notice that the moves we introduced in this step substitute positive edges with other positive edges. Hence these operations preserve the value of the complexity function $h$.
\end{rmk}

\subsection{Step 2: decomposing moves of type $\Rcal,\Rtil,\W$ and $\Wtil$}
We now proceed to show that the moves introduced in the previous step can be decomposed into other moves of the same types, with some extra properties regarding their support. 

Given a move $\Rtil_{a,c}^b$, we call the triangle $[a,b,c]$ the \emph{support} of this move. Similarly, given a move $\Rcal_{a,c}^b$, we call the quadrilateral $[a,b,c, \tau(a)]$ the \emph{support} of the move.
We begin by showing that the support of $\Rtil$ and $\Rcal$ moves can be supposed to be arbitrarily small.

\begin{lem} \label{lem: reduceRtil}
For every $\varepsilon>0$, an admissible $\Rtil$ move factorizes into admissible $\Rtil$ moves such that the diameter of the interior of their support is smaller than $\varepsilon$ (and their inverses).
\end{lem}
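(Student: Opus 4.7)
The plan is to realize $\widetilde{\mathcal{R}}_{a,c}^b$ as a composition of admissible $\widetilde{\mathcal{R}}^{\pm 1}$ moves whose supports tile the triangle $T = [a,b,c]$. First, I would choose a triangulation $\Delta$ of $T$ into sub-triangles of diameter less than $\varepsilon$, chosen generically so that no edge of $\Delta$ projects radially onto $\ax$ (i.e., no edge of $\Delta$ projects to a segment through $0 \in \kappa$), and so that no vertex of $\Delta$ lies on $\ax$, other than a vertex already present in $[a,c] \cap \ax$. Admissibility of the original move forces $T \cap \ax$ to be empty or a single endpoint of $[a,c]$, and both conditions exclude only codimension-one loci in the space of vertex positions, so a sufficiently generic refinement of any fine enough triangulation of $T$ works.

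Next, I would construct a sequence of PL arcs $\gamma_0 = [a,c], \gamma_1, \ldots, \gamma_N = [a,b] \cup [b,c]$ inside $T$, joining $a$ to $c$ along edges of $\Delta$, such that $\gamma_k$ and $\gamma_{k+1}$ differ by a single flip across a sub-triangle $T_k \in \Delta$: one of the arcs contains a single edge of $T_k$ and the other contains the remaining two. The existence of such a sequence is a standard shelling argument on the triangulated disk $T$, in which sub-triangles are peeled off one at a time along the moving path. Each flip realizes either $\widetilde{\mathcal{R}}_{a_k,c_k}^{b_k}$ or its inverse, with support $T_k$ of diameter less than $\varepsilon$, and the full composition sends $[a,c]$ to $[a,b] \cup [b,c]$, matching the effect of $\widetilde{\mathcal{R}}_{a,c}^b$.

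Finally, I would verify admissibility of each small move via Remark~\ref{rmk: admissibleEtil}. Disjointness of $T_k$ from the relevant link outside the current portion of the path follows from $T \cap L = [a,c]$ together with the fact that each intermediate link agrees with $L$ outside $T$. The axis condition follows from the second genericity hypothesis: $T_k \cap \ax$ is empty or a single vertex of $L$ on $\ax$. For positivity of all three edges of each $T_k$, I would use that $[a,b,c]$ has all edges positive, so a single branch of the angular coordinate $\theta$ around $0$ is well-defined on the projection of $T$ to $\kappa$ and satisfies $\theta(a) < \theta(b) < \theta(c)$; every vertex of $\Delta$ then has a $\theta$-value in this range, the first genericity hypothesis forces the three $\theta$-values of any sub-triangle to be distinct, and this implies that all three edges of $T_k$ are positive.

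The main obstacle is the angular positivity step: one must set up the branch of $\theta$ carefully and show that, once genericity eliminates radial edges, every sub-triangle inherits the positive-edge condition. This is especially delicate when $T$ touches $\ax$ at an endpoint of $[a,c]$, since small triangles near that endpoint will be close to $\ax$ and one has to check that they remain admissible in the permitted ``endpoint on $\ax$'' manner of Remark~\ref{rmk: admissibleEtil}.
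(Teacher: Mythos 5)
Your overall plan (tile the support by small triangles and realize the move as a sequence of flips) is a reasonable idea, but the key claim that a "standard shelling argument" produces a sequence of flips all of which are $\Rtil$ moves is where the proof has a real gap, and it is not resolved by genericity of the triangulation.

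A flip across a sub-triangle $T_k$ with vertices $u,v,w$ is an $\Rtil$ move (or an inverse $\Rtil$ move) only when the single-edge side of the flip is the edge joining the two vertices with extremal $\theta$-values and the two-edge side passes through the middle-$\theta$ vertex: only then are all three edges positive in the direction in which the intermediate path traverses them. Having three distinct $\theta$-values on $T_k$ (which your genericity gives) is necessary but not sufficient. If the shelling happens to flip across one of the "short" $\theta$-edges of $T_k$, then one of the resulting edges is traversed against the $\theta$-gradient, hence negative, and that step is an $\Etil$ move rather than an $\Rtil$ move, changing $h$. Equivalently, every intermediate arc $\gamma_k$ must be $\theta$-monotone; this is a constraint on the shelling order, not on the triangulation, and a generic triangulation of $T$ together with an arbitrary shelling need not satisfy it. You would need a separate combinatorial lemma that a $\theta$-monotone shelling exists for your triangulation, and that lemma is essentially the entire content you are trying to prove; calling it "standard" does not close the gap.

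The paper sidesteps this issue entirely by avoiding general triangulations: it subdivides the support once into at most four sub-triangles using the midpoints $x\in[a,c]$, $y\in[a,b]$, $z\in[b,c]$, and explicitly distinguishes the two possible sign configurations. When the radial line through $x$ separates the projections of $y$ and $z$, both $yx$ and $xz$ are positive and the composition is the evident one. When it does not, exactly one of them (say $xz$) is negative, and the paper rewrites the composition so that the segment $[x,z]$ is only ever traversed in the direction $zx$, which is positive. Iterating this halving reduces the support diameter below $\varepsilon$ after finitely many steps. This is a considerably more elementary argument than a monotone shelling, and it handles the orientation issue by construction rather than by appeal to genericity.

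As a secondary remark, your disjointness and $\ax$-intersection checks look workable via \Cref{rmk: admissibleEtil}, and your caution about the case $[a,c]\cap\ax\neq\emptyset$ is well placed; but these are subordinate to the main issue, which is the positivity of the intermediate arcs.
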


\begin{figure}
    \centering
    \includegraphics[width=0.9 \textwidth]{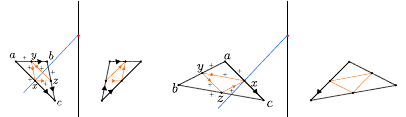}
    \caption{The two cases in the proof of \Cref{lem: reduceRtil}.}
    \label{fig:Rtilfactors}
\end{figure}

\begin{figure}
    \centering
    \includegraphics[width= 0.95 \textwidth]{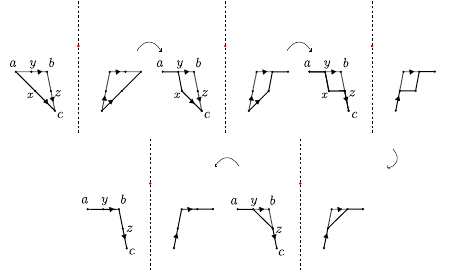}
    \caption{Every move of type $\Rtil$ factorizes into arbitrarily small $\Rtil$ moves. This is the case where $y$ and $z$ lie on opposite sides with respect to the blue line, like the left picture in \Cref{fig:Rtilfactors}.}
    \label{fig:Rtilfactorization}
\end{figure}

\begin{proof}
Fix $\Rtil_{a,c}^b$. Let $x \in [a,c],$ $y\in [a,b],$ $z \in [b,c]$. 
Notice that $yz$ is always positive for every choice of $y,z$. In fact $az$ is positive since it has the same starting point as $ab$ and $ac$, and lies in-between these edges, which are both positive. Now $yz$ has the same endpoint as $bz$ and $az$ and lies in-between them, therefore, since the last two are both positive, also $yz$ is. 
Consider the line in $\R^2$ connecting $0$ and the projection of $x$. If the projections of $y$ and $z$ lie on opposite sides of this line, then one can see that also $yx$ and $xz$ must be positive (see \Cref{fig:Rtilfactors}, left). 
In this case (see \Cref{fig:Rtilfactorization})
$$\Rtil_{a,c}^b=(\Rtil_{b,c}^{z})^{-1} \circ (\Rtil_{a,b}^{y})^{-1} \circ \Rtil_{y,z}^b \circ (\Rtil_{y,z}^x)^{-1}\circ \Rtil_{x,c}^z \circ \Rtil_{a,x}^y \circ \Rtil_{a,c}^x.$$

Otherwise, if the line does not separate $y$ and $z$, only one between $yx$ and $xz$ is negative (\Cref{fig:Rtilfactors}, right). Suppose $xz$ (the other case is analogous).
Then
$$\Rtil_{a,c}^b=(\Rtil_{b,c}^{z})^{-1} \circ (\Rtil_{a,b}^{y})^{-1} \circ \Rtil_{y,z}^b \circ (\Rtil_{z,c}^x)^{-1}\circ \Rtil_{y,x}^z \circ \Rtil_{a,x}^y \circ \Rtil_{a,c}^x.$$
If for example we pick $x,y,z$ that split in half the respective edge, all the moves involved are either applied on degenerate triangles, or have edges of length one half of the edges of the original moves.
Applying this procedure multiple times, we obtain moves that have arbitrarily small support.
\end{proof}

\begin{lem} \label{lem: reduceR}
For every $\varepsilon>0$, an admissible $\Rcal$ move factorizes into admissible $\Rtil$ moves, and admissible $\Rcal$ moves such that the diameter of the interior of their support is smaller than $\varepsilon$ (and their inverses).
\end{lem}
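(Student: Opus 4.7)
The plan is to mimic the proof of \Cref{lem: reduceRtil}, but since the support of an $\Rcal$ move is a $\tau$-symmetric kite $Q=[a,b,c,\tau(a)]$ with $b,c\in\ax$, I must reduce the diameter in two independent directions: the width $|a-\tau(a)|$ perpendicular to $\ax$ and the length $|b-c|$ along $\ax$. The width will be shrunk using auxiliary $\Rtil$ moves; the length by subdividing the resulting narrow quadrilateral along $\ax$ into many small $\Rcal$ pieces.

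For the width step, I will establish that for any $a'\in\operatorname{Int}[a,b]$ one has the identity
\[
\Rcal^b_{a,c} \;=\; (\Rtil^{a'}_{a,b})^{-1}\circ \Rcal^b_{a',c}\circ \Rtil^{a'}_{a,c}.
\]
All three factors have support sitting inside $Q$, so the original admissibility $Q\cap L=[a,c]\cup[c,\tau(a)]$ together with an explicit description of the intermediate links forces each factor to be admissible; the $\tau$-symmetry condition in \Cref{rmk: admissibleEtil} reduces to checking $\{c\}\subset\ax$, which holds by assumption. Positivity of the new edges $a'c$, $a'b$, $aa'$ follows from a short convexity computation on the projection to the $(x,y)$-plane: writing $a'=(1-t)a+tb$ and using that $b$ projects to the $y$-axis, the signed area of the projected triangle $0,a',c$ becomes $(1-t)$ times that of $0,a,c$, which is positive by positivity of $ac$; the analogous computations handle $a'b$ and $aa'$. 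Choosing $a'$ close enough to $b$ makes the middle factor $\Rcal^b_{a',c}$ have width $<\varepsilon/2$.

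For the length step, I pick points $c=z_0,z_1,\ldots,z_n=b$ on $[b,c]\subset\ax$ with consecutive spacing $<\varepsilon/2$ and decompose
\[
\Rcal^b_{a',c} \;=\; \Rcal^b_{a',z_{n-1}}\circ \Rcal^{z_{n-1}}_{a',z_{n-2}}\circ\cdots\circ \Rcal^{z_1}_{a',c}.
\]
Each factor has support a sub-kite of $[a',b,c,\tau(a')]$ with both width and length less than $\varepsilon/2$, hence diameter $<\varepsilon$. Admissibility is again automatic: the sub-kites lie inside the narrow quadrilateral and the running link meets each of them only in the two edges being replaced, while positivity of $a'z_i$ is inherited from that of $a'c$ and $a'b$ by the same convexity argument. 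Finally, I apply \Cref{lem: reduceRtil} to each of the outer factors $\Rtil^{a'}_{a,c}$ and $\Rtil^{a'}_{a,b}$ to subdivide them into admissible $\Rtil$ moves of support diameter $<\varepsilon$, yielding the required factorization.

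The main subtlety will be bookkeeping: verifying admissibility of every factor against the ``running'' link, which keeps changing as the factorization is applied. I handle this uniformly by noting that every support occurring in the full factorization is contained in the original $Q$, so any stray intersection would already violate admissibility of $\Rcal^b_{a,c}$; the newly created edges all lie inside $Q$ in controlled positions and only meet the subsequent supports at the prescribed vertices.
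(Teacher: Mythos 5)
Your proof is correct, and it rests on the same basic idea as the paper's — subdividing the kite-shaped support of the $\Rcal$ move using $\Rtil$ moves on the slanted edges and smaller $\Rcal$ moves along the axis — but the decomposition you use is organized differently. The paper writes down a single identity
\[
\Rcal_{a,c}^b= (\Rtil_{a,b}^e)^{-1}\circ \Rcal_{e,d}^{b}\circ \Rtil_{a,d}^e\circ (\Rtil_{a,d}^{f})^{-1}\circ \Rcal_{f,c}^d \circ \Rtil_{a,c}^{f}
\]
with $d\in[c,b]$, $e\in[a,b]$, $f\in[a,c]$, producing two smaller $\Rcal$ moves per step, and iterates it until both dimensions shrink below $\varepsilon$. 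You instead separate the two directions: your width step
\[
\Rcal^b_{a,c}=(\Rtil^{a'}_{a,b})^{-1}\circ\Rcal^b_{a',c}\circ\Rtil^{a'}_{a,c}
\]
is essentially the degenerate case of the paper's identity where one of the two $\Rcal$ factors collapses, and your length step directly chains $\Rcal$ moves along $\ax$,
\[
\Rcal^b_{a',c}=\Rcal^b_{a',z_{n-1}}\circ\cdots\circ\Rcal^{z_1}_{a',c},
\]
which needs no intermediate $\Rtil$ moves and is non-iterative. Your sign check via the determinant computation $\det[\pi(a'),\pi(c)]=(1-t)\det[\pi(a),\pi(c)]$ (using $\det[\pi(b),\pi(c)]=0$ since $b,c\in\ax$) is the same fact the paper implicitly relies on, and your observation that all supports are contained in the original $Q$ (so admissibility of the running factors follows from admissibility of $\Rcal^b_{a,c}$) is also sound, once one notes that the direction from $a'$ to $a$ lies outside the angle at $a'$ in each sub-kite. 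Net comparison: your version is a bit cleaner because the length chaining avoids the auxiliary degenerate $\Rtil$ moves that the paper's iteration generates; the paper's version is slightly more uniform since a single formula handles both directions at once.
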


\begin{figure}
    \centering
    \includegraphics[width= 0.8 \textwidth]{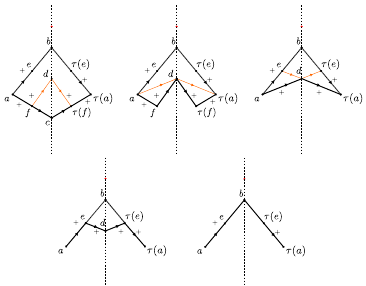}
    \caption{Every move of type $\Rcal$ factorizes into arbitrarily small $\Rcal$ and $\Rtil$ moves.}
    \label{fig:Rmovefactors}
\end{figure}

\begin{proof}
Fix $\Rcal_{a,c}^b$.
    Let $d \in[c,b]$, $e \in [a,b]$ and $ f \in [a,c]$. One notices (\Cref{fig:Rmovefactors}) that
    $$\Rcal_{a,c}^b= (\Rtil_{a,b}^e)^{-1}\circ \Rcal_{e,d}^{b}\circ \Rtil_{a,d}^e\circ (\Rtil_{a,d}^{f})^{-1}\circ \Rcal_{f,c}^d \circ \Rtil_{a,c}^{f}. $$
    
    Iterating this process at every step we obtain a smaller $\Rcal$ move. Since $d,e,f$ can be chosen arbitrarily in $[a,c],[c,b],[a,d]$ respectively, after a finite number of iterations one can make the $\Rcal$ moves arbitrarily small. By \Cref{lem: reduceRtil} also the $\Rtil$ moves can be factorized so that their support is arbitrarily small, hence the conclusion.
\end{proof}

Given a move $\Wtil_{a,d}^{b,c}$, we call $[a,b,c,d]$ the \emph{support} of this move. Similarly, given a move $\W_{a,d}^{b,c}$ not going through $\infty$, we call $[a,b,c, \tau(a), \tau(b)]$ the \emph{support} of the move.
When $\W_{a,d}^{b,c} $ goes through $\infty$, we call $\text{cl}(\R^3 \setminus [a,c,\tau(a),b, \tau(b),d])$ the \emph{support} of the move (here cl indicates the closure).
We now proceed to show that the supports of the $\W$ and $\Wtil$ moves can be assumed to be disjoint from the link. We first need a definition:

\begin{dfn}
  A move of type $\widetilde{\mathcal{W}}_{a,d}^{b,c}$ on a link $L$ is said to be \emph{empty} if it is admissible and its support $S$ intersects the link $L$ only in $[a,d]$, and $S \cap \ax $ is either empty or an endpoint of $[a,d]$ lying in $\ax$.

 A move of type $\W_{a,c}^{b,d}$ is said to be \emph{empty} if it is admissible and 
     its support $S$ intersects the link $L$ only in $[a,c]\cup[c,\tau(a)]$.
\end{dfn}

\begin{prop} \label{prop: empty}
An admissible move of type $\Wtil$ factorizes into admissible $\Rtil$ moves and empty $\Wtil$ moves.
An admissible move of type $\W$ factorizes into admissible $\Rtil$ moves, admissible $\Rcal$ moves and empty $\W$ moves.
\end{prop}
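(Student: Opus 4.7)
The plan is to treat the two cases separately, using the subdivision philosophy of \Cref{lem: reduceRtil} and \Cref{lem: reduceR}. For the $\Wtil$ case the factorization is essentially formal. By the definition of $\Wtil_{a,d}^{b,c}$ the five edges $ab, bc, cd, ad, bd$ are all positive, so each of the component $\Etil$-moves in $\Wtil = \Etil_{b,d}^c \circ \Etil_{a,d}^b$ has all positive edges, hence is in fact an $\Rtil$ move under the notational convention of the paper. Admissibility of $\Wtil$ is defined as admissibility of its two components, so each $\Rtil$ factor is automatically admissible. One obtains directly
\[
\Wtil_{a,d}^{b,c} = \Rtil_{b,d}^c \circ \Rtil_{a,d}^b,
\]
and so no empty $\Wtil$ move is actually needed.

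For the $\W$ case the proposition has real content, because $db$ positive forces $bd$ negative, so the component $\Etil_{a,d}^b$ and $\D_{b,d}^c$ are genuine non-$\Rtil$, non-$\Rcal$ moves. The admissibility of $\W_{a,d}^{b,c}$ forces $[a,b,d] \cap L = [a,d]$, together with its $\tau$-image, and $[b,c,d,\tau(b)] \cap L' = [b,d] \cup [d,\tau(b)]$ for the intermediate link $L'$; these control $L$ along the boundary of the pentagonal support $[a,b,c,\tau(a),\tau(b)]$. The only possible source of non-emptiness is therefore strands of $L$ crossing the \emph{interior} of this pentagon. The plan is to first apply admissible $\Rtil$ and $\Rcal$ moves, made arbitrarily small by \Cref{lem: reduceRtil} and \Cref{lem: reduceR}, to equivariantly isotope those interior strands out of the support without changing $h$; after this cleaning step the original $\W$ move is by construction empty, and the full factorization is (cleaning) $\circ$ (empty $\W$) $\circ$ (inverse of cleaning), which is a composition of the allowed moves.

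For $\W$ going through $\infty$ the role of cleaning is inverted: the support is the closure of the exterior of the polytope $[a,c,\tau(a),b,\tau(b),d]$, so strands of $L$ lying outside the polytope must be brought inside. Because $\Rcal$-moves through $\infty$ are not admitted, this part of the cleaning is carried out by arbitrarily small $\Rtil$ and $\Rcal$ rearrangements together with small empty $\W$-through-$\infty$ sub-moves, each of which pushes a small arc of $L$ across the complement into the polytope. Iterating and stacking these local pieces eventually leaves the exterior of the polytope free of $L$ except on $[a,d] \cup [d,\tau(a)]$, after which a final empty $\W$-through-$\infty$ completes the decomposition.

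The main obstacle I expect is that the whole cleaning sequence has to use \emph{only} moves that preserve $h$ (namely $\Rtil$, $\Rcal$, and empty $\W$'s), whereas a naive equivariant isotopy of $L$ could pass through intermediate configurations with $h>0$. One therefore has to argue that any required rearrangement of $L$ relative to the support can always be realized within the positive-edge subcomplex, by patching together small local moves and invoking the arbitrary-smallness provided by \Cref{lem: reduceRtil} and \Cref{lem: reduceR}. The through-$\infty$ sub-case is the most delicate: the unboundedness of the support prevents a simple local argument, and the absence of $\Rcal$-through-$\infty$ moves forces the decomposition itself to contain small empty $\W$-through-$\infty$ pieces, whose admissibility against both $L$ and $\ax$ must be checked by hand.
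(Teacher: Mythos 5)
Your treatment of the $\Wtil$ case rests on a sign error that collapses the whole argument. The definition of $\Wtil_{a,d}^{b,c}$ requires $ab, bc, cd, ad, db$ to be positive, and since $p_1p_2$ denotes the \emph{oriented} segment from $p_1$ to $p_2$, positivity of $db$ is equivalent to \emph{negativity} of $bd$. Thus after the first component $\Etil_{a,d}^b$, the intermediate link $a \to b \to d$ has a negative edge $bd$: the first component increases $h$ by one, the second decreases it back, and neither is an $\Rtil$ move. This is exactly the ``tooth'' in \Cref{fig:Wtil} that makes $\Wtil$ a genuine stabilization; indeed, the purported identity $\Wtil_{a,d}^{b,c} = \Rtil_{b,d}^c \circ \Rtil_{a,d}^b$ cannot possibly hold, since $\Wtil$ moves realize double stabilizations (which change the braid index of the closure) while $\Rtil$ moves are conjugations (which do not). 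So the $\Wtil$ half of the proposition is not vacuous; the paper proves it by pushing the base of the tooth close to $d$ with an $\Rtil$ conjugate, writing $\Wtil_{a,d}^{b,c} = (\Rtil_{a,b}^x)^{-1} \circ \Wtil_{a,x}^{b,c} \circ \Rtil_{a,d}^x$ with $x$ near $d$ so that the surviving $\Wtil$ has small support disjoint from the rest of $L$ and from $\ax$.

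Your plan for the $\W$ case is also not the paper's: you propose to \emph{move the offending strands of $L$ out of the support} by small $\Rtil$ and $\Rcal$ moves and then apply an empty $\W$. You correctly flag the difficulty (that such a cleaning isotopy must stay in the positive-edge regime), but you never resolve it, and it is not at all clear that an arbitrary strand crossing the pentagon can be cleared using only $h$-preserving moves without crossing $\ax$ or changing the diagram elsewhere; moreover the ``inverse of cleaning'' is applied to a \emph{different} link after the $\W$ move, so its admissibility is not automatic. The paper does the opposite: it leaves $L$ untouched and \emph{shrinks the $\W$ move} toward the quadrilateral $Q=[b,c,d,\tau(b)]$, which by admissibility already avoids $L$. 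Concretely, picking $d'$ near $d$ and $c'$ near $c$ one has $\W_{a,d}^{b,c}= (\Rtil_{a,b}^{d'})^{-1} \circ (\Rtil_{b,c}^{c'})^{-1}\circ \Rtil_{d',c'}^b \circ \W_{d',d}^{c',c} \circ \Rtil_{a,d}^{d'}$, and the surviving $\W_{d',d}^{c',c}$ is empty because its support collapses onto a neighborhood of $Q$. The through-$\infty$ case uses a similar conjugation by $\Rtil$/$\Rcal$ moves plus a homothety that blows up the complementary region so that $L$ is swallowed inside it. In short: the $\Wtil$ claim is incorrect, and the $\W$ outline replaces the paper's support-shrinking conjugation by an unproved strand-clearing step.
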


\begin{proof}
Let us deal first with the case $\Wtil_{a,d}^{b,c}$: let $x\in [a,d]$ be close to $d$. Then one can see (as in \Cref{fig:emptyWtil}) that
$$ \Wtil_{a,d}^{b,c}=(\Rtil_{a,b}^x)^{-1}\circ \Wtil_{a,x}^{b,c}\circ \Rtil_{a,d}^x.$$
If $x$ is close enough to $d$ then $[x,b,c,d]$ lies in a small neighbourhood of $[b,c,d]$, hence $[x,b,c,d]\cap L=[x,d]$ and since $\ax \cap [d,b,c]= \emptyset$ also $[x,b,c,d]\cap \ax = \emptyset. $

Now let us consider the case of a move $\W_{a,d}^{b,c}$ that does not go through $\infty$: let $d'$ be very close to $d$ such that $ad',cd' $ are positive. If $d'$ is close enough to $d$, then there exists $c'$ close enough to $c$ such that $c'c$ and $d'c'$ are positive. Then $$\W_{a,d}^{b,c}= (\Rtil_{a,b}^{d'})^{-1} \circ (\Rtil_{b,c}^{c'})^{-1}\circ \Rtil_{d'c'}^b \circ \W_{d',d}^{c',c} \circ \Rtil_{a,d}^{d'}$$ 
as shown in \Cref{fig:emptyW}, and one can see that, if $ c'$ and $d'$ are close enough to $c$ and $d$ respectively, all the moves on the right-hand side are well-defined and admissible. In fact the support of each of these moves is either contained or very close to $[a,b,d]$ and to the quadrilateral with vertices $b,c,\tau(c),d$, that do not intersect $L$ since $\W_{a,d}^{b,c}$ is admissible.
Moreover, if $d'$ is close enough to $d$ and $c'$ to $c$, then $[d',d,\tau(d'),c',\tau(c'),c]$ is close to the quadrilateral $Q$ with vertices $b,\tau(b),c,d$, thus the intersection $[d',d,\tau(d'),c',\tau(c'),c] \cap L $ is equal to $ Q \cap L = \{d\} $. Hence $ \W_{d',d}^{c',c}$ is empty.

\begin{figure}
    \centering
    \includegraphics[width= 0.6 \textwidth]{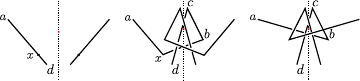}
    \caption{A move of type $\Wtil$ factorizes into $\Rtil$ moves and empty $\Wtil$ moves.}
    \label{fig:emptyWtil}
\end{figure}
 
\begin{figure}
    \centering
    \includegraphics[width= 0.9 \textwidth]{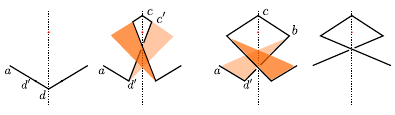}
    \caption{A move of type $\W$ factorizes into $\Rtil$ moves and empty $\W$ moves.}
    \label{fig:emptyW}
\end{figure}

Finally, let us deal with the case of  move $\W_{a,d}^{b,c} $ that goes through $\infty$. For $x\in [a,d]$ consider the line $r_x$ containing $x$ and $0$. Notice that, for $x$ close enough to $d$, the half-line $hr_x \subset r_x$ with endpoint $x$ that does not contain $0$, does not intersect $L$ outside of $x$ (because it is close to $\alpha \setminus Q$, $\alpha$ being the plane containing the quadrilateral $Q$ of vertices $b,\tau(b),c,d$). 

Let $d'\in [a,d]$ be close enough to $d$ so that it satisfies the above property. Let $y\in[d',d] $ and let $p$ lie on the half-line $hr_y$. Notice that if $d'$ is close enough to $d$, then for every choice of $p$ as above, one has that $[d',p,Kb]\cap L=\{d'\}$ for every $K>1$. 
In fact, for $z \in [d',d]$, let $U_z$ be the convex region inside the plane containing $0$, $b$, $z$ that is delimited by $hr_z$, $[z,b]$ and the half-line $\{Kb\,|\, K\ge 1\}$: up to taking $d'$ even closer to $d$, the set
$U_z \cap L$ is equal to $\{z\}$ for every $z \in [d,d']$, since $U_z$ is close to $\alpha \setminus [b,\tau(b),c,d]$ (see \Cref{fig:Ud}). 

\begin{figure}
    \centering
    \includegraphics[width= 0.35 \textwidth]{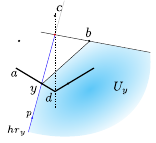}
    \caption{The set $U_{y}$.} 
    \label{fig:Ud}
\end{figure}
Notice that $[d',p,Kb] \subset \bigcup_{z \in [d,d']} U_z $, and as a consequence $[d',p,Kb]\cap L$ is contained in $ \bigcup_{z \in [d,d']} U_z \cap L= [d,d']$ for every $K>1$, for every $p \in hr_y$. Since $[d',p,Kb]\cap [d,d'] = \{d'\} $ then $[d',p,Kb]\cap L=\{d'\} $.

Moreover, if $p$ is sufficiently far from $y$, then for $K$ sufficiently large $L$ is contained in $[p,\tau(p),Kd,Kb,K\tau(b),Kc]$. We fix $K$ sufficiently large.
Then let $q$ be close to $Kb$ so that $bq$ is positive (see \Cref{fig:reallyemptyWinfinity}). 

\Cref{fig:reallyemptyWinfinity} also shows that
$$ \W_{a,d}^{b,c}=\Rcal_{b,c}^{Kc}\circ (\Rtil_{b,Kc}^{q})^{-1} \circ (\Rtil_{a,b}^{d'})^{-1}\circ \Rtil_{d',q}^b \circ (\Rtil_{d',q}^p)^{-1}\circ \W_{p,Kd}^{q,Kc} \circ \Rcal_{p,d}^{Kd}\circ\Rtil_{d',d}^p \circ \Rtil_{a,d}^{d'}.$$
Observe that all the moves on the right hand side are well-defined and admissible and that $\W_{p,Kd}^{q,Kc} $ is empty. This concludes the proof. 
\end{proof}
\begin{figure}
    \centering
    \includegraphics[width=0.75 \textwidth]{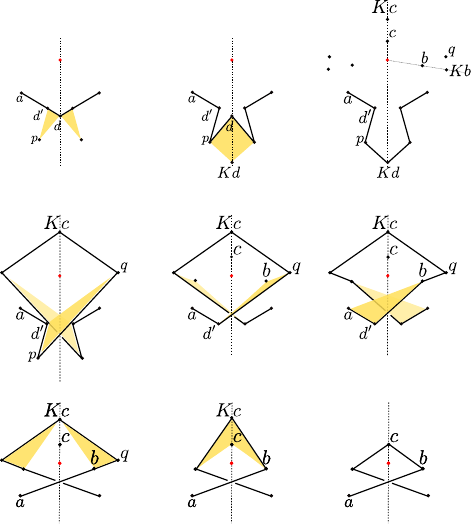}
    \caption{A move of type $\W$ through $\infty$ factorizes into moves of type $\Rcal$, $\Rtil$ and empty $\W$ moves through $\infty$.}
    \label{fig:reallyemptyWinfinity}
\end{figure}

We now show that we can actually suppose that the projections of the supports of the $\W$ and $\Wtil$ moves are disjoint from the projection of the link.

\begin{dfn}
Let $\pi:\R^3 \to \R^2$ the projection of the link diagram. Then: 
\begin{itemize}
    \item 
a move of type $\widetilde{\mathcal{W}}_{a,d}^{b,c}$ is said to be \emph{really empty} if it is empty, $\pi([a,b,c,d]) \cap \pi(L)=\pi([a,d])$ and $[a,b,c,d] \cap \ax$ is either empty or an endpoint of $[a,d]$ lying in $\ax$;

\item a move of type $\W_{a,d}^{b,c}$ that does not go through $\infty$ is said to be \emph{really empty} if it is empty and $$\pi([a,c,\tau(a),b, \tau(b),d]) \cap \pi(L)=\pi([a,d])\cup\pi([d,\tau(a)]);$$ 
\item a move of type $\W_{a,d}^{b,c}$ that goes through $\infty$ is said to be \emph{really empty} if it is empty, $\pi(L\setminus ([a,d] \cup \tau([a,d])))\cap \pi([a,b,d])=\emptyset,$ and, if we call $Q$ the quadrilateral with vertices $b,c,d,\tau(b)$, then $\pi(L)\subset \pi(Q)$.
\end{itemize}
\end{dfn}

\begin{dfn}
    The move $\Wtil_{a,d}^{b,c}$ is said to be \emph{of crossing type} if the segments $[a,b]$ and $[d,c]$ form a crossing in the projection (see \Cref{fig:crossingWtil}). Otherwise is said to be \emph{non-crossing}.
\end{dfn}

\begin{dfn}
    A move $\W_{a,d}^{b,c}$ such that $c \notin [a,b,\tau(a),\tau(b)]$ is said to be 
    \begin{itemize}
        \item \emph{of type A} if the projections of the points $b$ and $c$ in $\R^2$ lie on the same side with respect to the projection of the line containing $ a$ and $\tau(a)$ (see for example the second picture of \Cref{fig: typeAw});
        \item \emph{of type B} if the projections of the points $b$ and $c$ in $\R^2$ lie on opposite sides with respect to the projection of the line containing $ a$ and $\tau(a)$ (see for example the second picture of \Cref{fig:reallyemptywinftybb1});
    \end{itemize}
\end{dfn}

\begin{prop} \label{prop: really empty}
\begin{enumerate}
    \item An admissible move of type $\Wtil$ factorizes into admissible $\Rtil$ and really empty  $\Wtil$ moves of crossing type.
    \item An admissible move of type $\W$ that does not go through $\infty$ factorizes into admissible $\Rcal,\Rtil$ and a really empty $\W$ move of type A. 
    \item An admissible move of type $\W$ that goes through $\infty$ factorizes into admissible $\Rcal,\Rtil$ and a really empty $\W$ move of type B.

\end{enumerate}
\end{prop}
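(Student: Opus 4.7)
The plan is to use Proposition \ref{prop: empty} as a first reduction: it writes any admissible $\Wtil$ (resp.\ $\W$) move as a composition of $\Rtil$ (and, in the $\W$ case, $\Rcal$) moves together with \emph{empty} $\Wtil$ (resp.\ $\W$) moves. What remains is thus to further factorize each empty move into admissible $\Rtil$, $\Rcal$, and really empty moves of the prescribed combinatorial type (crossing, A, or B).

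For part (1), start with an empty $\Wtil_{a,d}^{b,c}$. The key tool is the subdivision identity
$$\Wtil_{a,d}^{b,c} = (\Rtil_{a,b}^{x})^{-1} \circ \Wtil_{a,x}^{b,c} \circ \Rtil_{a,d}^{x}$$
from the proof of Proposition \ref{prop: empty}, combined with Lemma \ref{lem: reduceRtil}, which allows $\Rtil$ moves to be refined to arbitrarily small ones. Iterating, we may shrink the quadrilateral $[a,b,c,d]$ into an arbitrarily thin tubular neighbourhood of $[a,d]$ in $\R^3$. Since $\pi(L)$ is a compact polyhedral $1$-complex and $\pi([a,d])$ is a single edge of the diagram, a small enough neighbourhood of $[a,d]$ projects into a neighbourhood of $\pi([a,d])$ disjoint from $\pi(L)\setminus\pi([a,d])$, yielding the really empty condition. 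To arrange crossing type, place $b$ and $c$ on opposite sides of the $2$-plane spanned by $[a,d]$ and $\bx$: this forces $\pi([a,b])$ and $\pi([d,c])$ to cross. If the given $b,c$ do not already satisfy this, a further short $\Rtil$ move applied to $b$ or $c$ corrects the side configuration without disturbing the projection condition.

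Parts (2) and (3) are handled analogously, but with more care about the equivariant structure and the point at infinity. In the non-through-$\infty$ case (2), the support is the equivariant hexagon $[a,c,\tau(a),b,\tau(b),d]$ (with $c,d\in\ax$); we shrink it into an arbitrarily thin equivariant neighbourhood of $[a,d]\cup[d,\tau(a)]$ using $\Rtil$ and $\Rcal$ moves (Lemmas \ref{lem: reduceRtil} and \ref{lem: reduceR}). The geometry of this shrinking places $b$ and $c$ on the same side of the projection of the axis line through $a$ and $\tau(a)$, yielding type A. In the through-$\infty$ case (3), the support is unbounded; one mimics the last construction in the proof of Proposition \ref{prop: empty}, scaling the auxiliary point $b$ to $Kb$ for $K$ sufficiently large so that $\pi(L)\subset\pi(Q)$ where $Q$ is the quadrilateral with vertices $b,c,d,\tau(b)$, and then uses small $\Rcal$ and $\Rtil$ factors to clean up the rest of the projection. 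The topology of the through-$\infty$ configuration then forces $b$ and $c$ onto opposite sides of the axis line, producing type B.

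The main obstacle will be simultaneously achieving the really empty condition and the correct type condition: the shrinking needed to clear the projection must be compatible with the side-of-axis configuration required by the type, and in the through-$\infty$ case one must further verify that, after the scaling argument, the cleaned-up support still projects to cover the whole of $\pi(L)$ as prescribed. Generic choice of the auxiliary points, together with the freedom granted by Lemmas \ref{lem: reduceRtil} and \ref{lem: reduceR}, should suffice to thread these requirements simultaneously.
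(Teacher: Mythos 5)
Your first reduction (invoking Proposition \ref{prop: empty} to get an empty move and then trying to further clean up the projection) matches the paper's strategy, but the mechanism you propose for achieving the \emph{really empty} condition does not work, and this is the crux of the proof. You suggest iterating the subdivision identity from the proof of Proposition \ref{prop: empty} to shrink the quadrilateral $[a,b,c,d]$ into an arbitrarily thin tubular neighbourhood of $[a,d]$. There are two problems. First, the identity $\Wtil_{a,d}^{b,c}=(\Rtil_{a,b}^x)^{-1}\circ\Wtil_{x,d}^{b,c}\circ\Rtil_{a,d}^x$ only replaces $d$ with a point $x$ in $[a,d]$; the vertices $b$ and $c$ are unchanged, so the new quadrilateral $[x,b,c,d]$ has roughly the same width as the original and never becomes a thin neighbourhood of $[x,d]$ no matter how many times you iterate. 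Second, and more fundamentally, even if you could shrink the support to a thin sleeve around $[a,d]$, this would not yield the really empty condition: the segment $[a,d]$ is an edge of the diagram, and its projection $\pi([a,d])$ generically crosses other edges of $\pi(L)$. Any neighbourhood of $\pi([a,d])$, however thin, contains such crossing points, so one would still have $\pi([a,b,c,d])\cap\pi(L)\supsetneq\pi([a,d])$. The paper avoids this by exploiting the fact that the support always meets the braid axis $\bx$: it finds a point $p\in\bx$ inside the support whose projection has a small ball $B$ disjoint from $\pi(L)$, and then uses a \emph{homothety centered at $p$} to shrink the support into $B$, writing the original move as a product of $\Rtil$ (and $\Rcal$) moves with a shrunken $\Wtil$ or $\W$ move whose support projects into $B$. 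Shrinking toward a point of $\bx$ away from $\pi(L)$, rather than toward $[a,d]$, is the essential idea your argument is missing.

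Your treatment of the combinatorial type is also off. For crossing type you suggest placing $b,c$ on opposite sides of the $2$-plane spanned by $[a,d]$ and $\bx$; projecting, this puts $\pi(b)$ and $\pi(c)$ on opposite sides of the line through $\pi([a,d])$, but this does not force $\pi([a,b])$ and $\pi([d,c])$ to cross. The paper instead introduces a point $x\in[a,d]$ near $d$ chosen so that $\pi(x)$ lies on the opposite side of the line through $\pi([c,d])$ from $\pi(b)$; this makes $\pi([x,b])$ cross $\pi([c,d])$, and $\Wtil_{x,d}^{b,c}$ has the prescribed crossing type before the homothety is applied. Similarly, for parts (2) and (3) the paper first explicitly converts between type A and type B with dedicated $\Rtil$ and $\Rcal$ factorizations (moving $b$ to a point $b'$ on the appropriate side of the axis line, or scaling $c$ and $d$ away from the origin in the through-$\infty$ case) and only then applies the homothety or scaling to clear the projection. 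Your claim that ``the geometry of this shrinking places $b$ and $c$ on the same side'' is not justified, and indeed type A versus type B is not an automatic consequence of emptiness or of shrinking; it has to be arranged by a separate move. You should state these conversions explicitly and verify admissibility, as the paper does.
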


\begin{proof} Recall that we denote the diagram projection by $\pi$.
\begin{enumerate}[wide, labelwidth=0pt, labelindent=0pt] 
    \item 
Let us deal with the move $\Wtil_{a,d}^{b,c}$. By \Cref{prop: empty} we can assume $\Wtil_{a,d}^{b,c}$ is empty. If it is not of crossing type, let $x$ be a point very close to $d$, such that: 
\begin{itemize}
\item the projection of $x$ lies in the half-plane in $\R^2$ delimited by the line containing the projection of $[c,d]$ that does not contain $\pi(b)$; 
\item the projection of $[x,b] $ and $[c,d]$ intersect in a crossing point; 
\item $xd$ is positive. 
\end{itemize}
One can see that such an $x$ always exists (see \Cref{fig:xregion}), and that we can pick it as close to $d$ as we want.
\begin{figure}
    \centering
\includegraphics{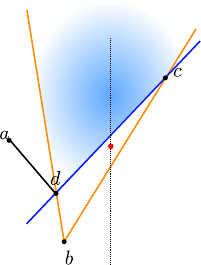}
    \caption{Any point $x$ projecting to the blue region has the properties that $xd$ is positive and that $xb$ and $cd$ intersect in the projection. Notice that the blue region is always nonempty, hence $x$ can be chosen to be any point close enough to $d$ that projects to that blue region.}
    \label{fig:xregion}
\end{figure}

Now we can see that $\Wtil_{a,d}^{b,c}=(\Rtil_{a,b}^x)^{-1} \circ \Wtil_{x,d}^{b,c} \circ \Rtil_{a,d}^x$ (see \Cref{fig:crossingWtil}).
\begin{figure}
    \centering
    \includegraphics[width= 0.9 \textwidth]{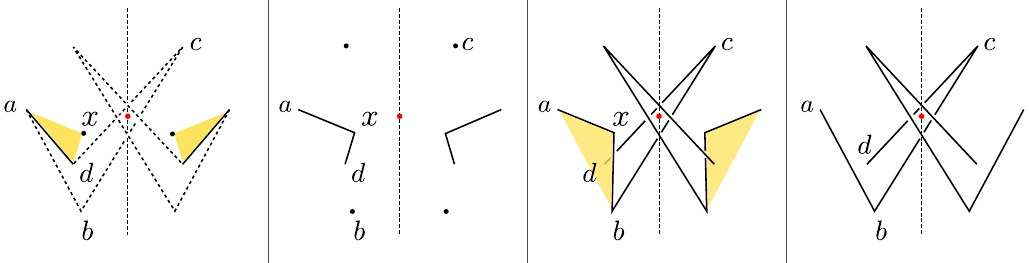}
    \caption{An empty move of type $\Wtil$ factorizes into moves of type $\Rtil$ and empty $\Wtil$ moves of crossing type.}
    \label{fig:crossingWtil}
\end{figure}
One can notice that, if $x$ is close enough to $d$, all the edges in the moves on the right-hand side have the required sign, and they are admissible because the starting move was empty. It is clear that $\Wtil_{x,d}^{b,c} $ is now of crossing type, and that it is still empty. 

Hence we can suppose that $\Wtil_{a,d}^{b,c}$ is an empty move of crossing type. Since the move is empty, $[a,b,c,d] \cap \ax $ is either empty or one of the endpoints of $[a,d]$ when it lies in $\ax$. Hence by \Cref{lem: convex} $[a,b,c,d] \cap \tau([a,b,c,d])= \emptyset$ or it is equal to an endpoint of $[a,d]$ when it lies in $\ax$. 

Notice that $[a,b,c,d] \cap \bx\neq \emptyset$. Since the link $L$ does not intersect the $\bx$ axis, we can find a ball $B$ inside $[a,b,c,d]$ centered on a point $p$ of the axis $\bx$ such that the projection of this ball is disjoint from $L$. Let $\theta$ be a homothety centered at $p$ so that $\theta([a,b,c,d])\subset B$. Let $a'$ be close to $\theta(a)$, $b'$ be close to $\theta(b)$, $c'$ be close to $\theta(c)$ and $d'$ be close to $\theta(d)$ so that $aa',b'b ,cc',d'd$ are all positive edges. 

Notice that (see \Cref{fig: reallyemptywtil}),  
$$ \Wtil_{a,d}^{b,c}= (\Rtil_{b,c}^{c'})^{-1} \circ(\Rtil_{b,c'}^{b'})^{-1}\circ\Rtil_{c',d}^c\circ\Rtil_{a,b'}^b\circ(\Rtil_{c',d}^{d'})^{-1}\circ(\Rtil_{a,b'}^{a'})^{-1}\circ\Wtil_{a',d'}^{b',c'}\circ\Rtil_{a',d}^{d'}\circ\Rtil_{a,d}^{a'}.$$
\begin{figure}
    \centering
    \includegraphics[width = 0.3 \textwidth]{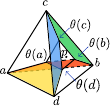}
    \caption{A generic picture of $[a,b,c,d]$ and of $\theta[a,b,c,d]$.}
    \label{fig:tetraedro}
\end{figure}
The moves are all admissible because the starting move was empty and because they are performed very close to the coloured quadrilaterals in \Cref{fig:tetraedro}, and the interiors of these quadrilaterals are pairwise disjoint. It is not hard to check that all the edges have the required sign. 
\begin{figure}
    \centering
    \includegraphics[width= 0.95 \textwidth]{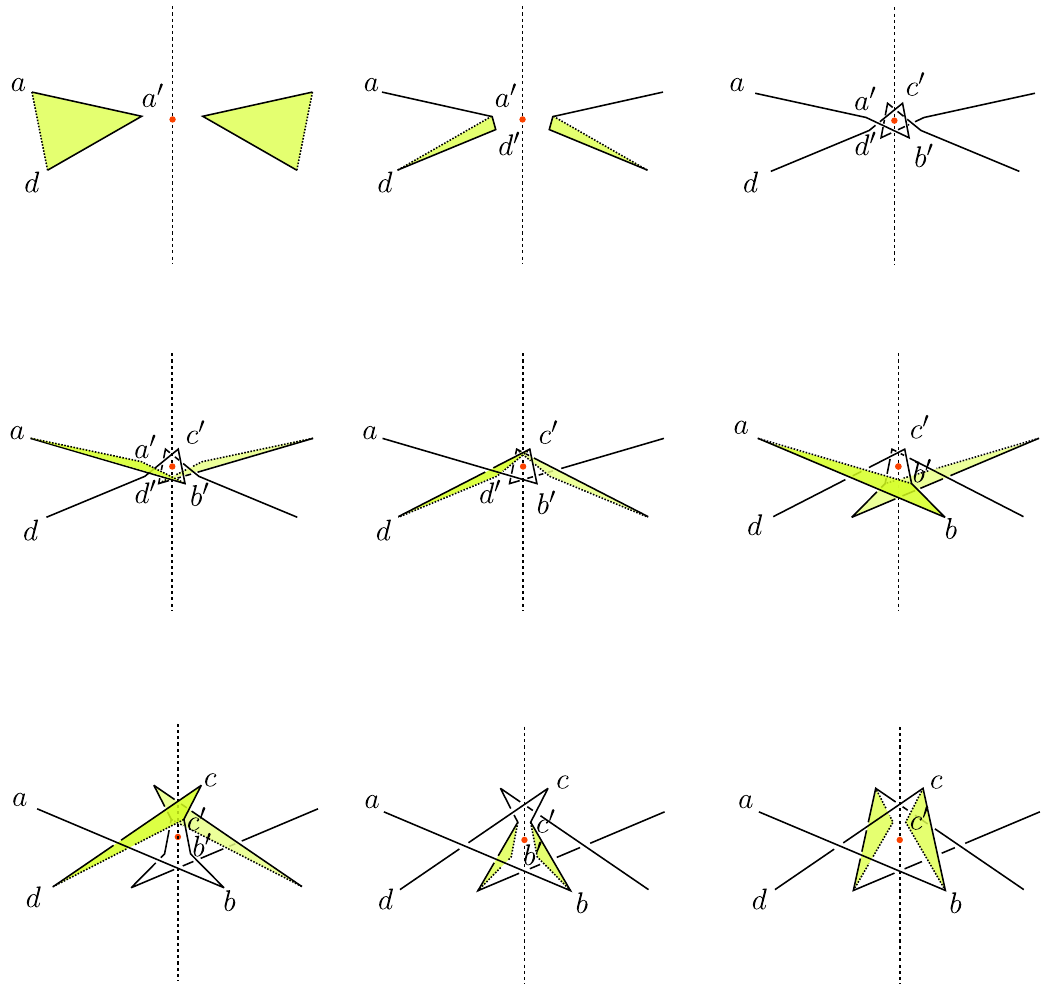}
    \caption{The factorization of an empty $\Wtil$ move of crossing type into admissible $\Rtil$ and a $\Wtil$ move of really empty type.}
    \label{fig: reallyemptywtil}
\end{figure}

 \item By \Cref{prop: empty} we will suppose that $\W_{a,d}^{b,c}$ is empty. 
 Let us show that we can furthermore assume that $\W_{a,d}^{b,c} $ is of type $A$. If it is of type $B$, let $a' \in [a,d]$ be close to $d$. Let $b' \in [a,b,d]$ be close to $d$ such that $ a'b'$ and $d b'$ are positive and $\pi(b')$ lies on the same side of $\pi(c)$ with respect to the projection of the line containing $a'$ and $\tau(a')$.
Notice that (see \Cref{fig: typeAw}), $$ \W_{a,d}^{b,c}= (\Rtil_{a,b}^{a'})^{-1}\circ (\Rtil_{a',b}^{b'})^{-1}\circ \Rtil_{b',c}^b\circ \W_{a',d}^{b',c} \circ \Rtil_{a,d}^{a'},$$
all the moves on the right-hand side are well-defined and admissible for a generic choice of $a'$ and $b'$. 
\begin{figure}
    \centering
    \includegraphics[width= 0.7 \textwidth]{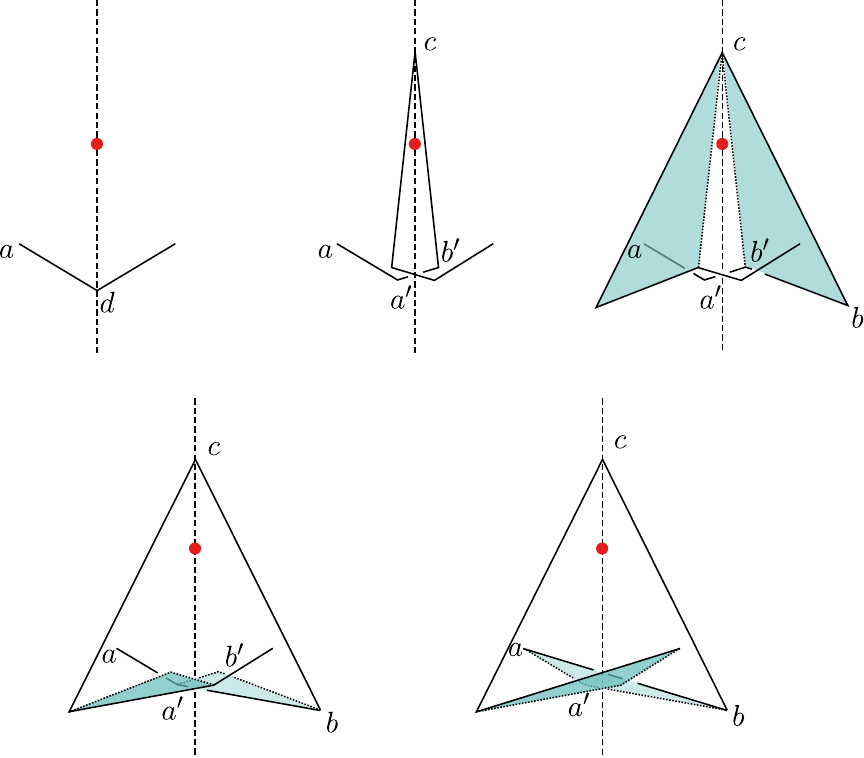}
    \caption{The factorization of an empty $\W$ move of type B into admissible $\Rtil$ and an empty $\W$ move of type A.}
    \label{fig: typeAw}
\end{figure}
The move $\W_{a',d}^{b',c} $ is still empty, $c \notin [a',b',\tau(a'),\tau(b')]$, and is of type A by construction.
 
We now assume that $\W_{a,d}^{b,c}$ is empty and of type A. As before, we can find a ball $B$ around $0$ inside $[a,d,\tau(a),b,c,\tau(b)]$ that does not intersect $L$ in the projection. There is a homothety $\theta$ centered at $0$ such that $\theta([a,d,\tau(a),b,c,\tau(b)]) \subset B $. Let $d'=\theta(d),c'=\theta(c)$ and let $a',b'$ be very close to $\theta(a)$ and $\theta(b)$ respectively so that $aa',b'b$ are positive. Then (see \Cref{fig:reallyemptyw})
$$ \W_{a,d}^{b,c}= (\Rtil_{a,b}^{b'})^{-1}\circ \Rtil_{b',c}^b \circ \Rcal_{b',c'}^c\circ(\Rtil_{a,b'}^{a'})^{-1}\circ\W_{a',d'}^{b',c'}\circ \Rtil_{a,d'}^{a'}\circ \Rcal_{a,d}^{d'}.$$ 

\begin{figure}
    \centering
    \includegraphics[width=0.8 \textwidth]{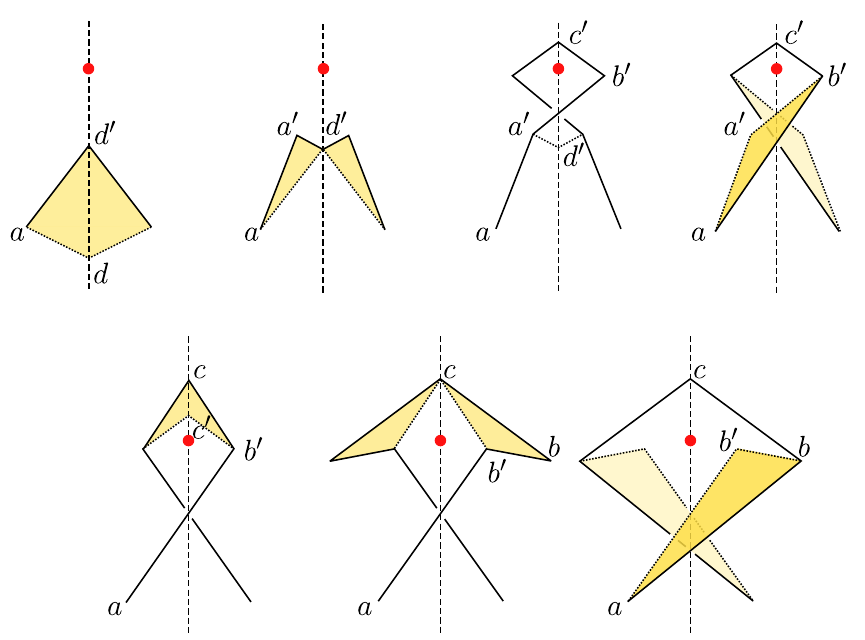}
    \caption{The factorization of an empty $\W$ move of type A into admissible $\Rcal$ and $\Rtil$ moves and a really empty $\W$ move of type A.}
    \label{fig:reallyemptyw}
\end{figure}
It is easy to check that all the moves are well-defined, and they are all admissible since the starting move was empty.
\item  
By \Cref{prop: empty} we will suppose that $\W_{a,d}^{b,c}$ is empty.

We first show that we can assume that $ \W_{a,d}^{b,c}$ is of type B: let $K,H>1$ and $c'=Kc $ and $d' =Hd$ so that $c'$ and $d'$ do not belong to $[a,b,c,\tau(b),\tau(a)]$. 

Fix $b'$ in the plane containing $\ax$ and $b$, so that 
\begin{itemize}
    \item the projection of the line containing $a$ and $\tau(a)$ separates $\pi(c')$ and $\pi(b')$; 
    \item $ab'$ and $db'$ are positive;
    \item and $b, c$ belong to  $[b',c',d']$. 
\end{itemize}
It is not hard to see that such $b'$ always exists. 

Up to increasing $H$ we can fix $b'$ such that $\pi([a,d',b']) \cap \pi([a,b,c,\tau(b), \tau(a), d])$ is equal to $\{\pi(a)\}$ (see \Cref{fig:reallyemptywinftyb}).  
\begin{figure}
    \centering
    \includegraphics[width=0.4\textwidth]{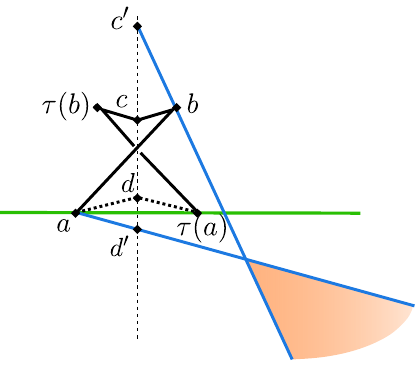}
    \caption{If one picks $b'$ such that it projects to a point in the orange region, then $b'$ satisfies all the requests.}
    \label{fig:reallyemptywinftyb}
\end{figure}
Then if $bb'$ is positive (see \Cref{fig:reallyemptywinftybb1}), $$ \W_{a,d}^{b,c}=\Rcal_{b,c'}^c\circ(\Rtil_{b,c'}^{b'})^{-1} \circ \Rtil_{a,b'}^b \circ\W_{a,d'}^{b',c'} \circ \Rcal_{a,d}^{d'}.$$

If $bb'$ is negative (see \Cref{fig:reallyemptywinftyb1b}) 
$$ \W_{a,d}^{b,c}=\Rcal_{b,c'}^c\circ(\Rtil_{a,b}^{b'})^{-1} \circ \Rtil_{b',c'}^b \circ\W_{a,d'}^{b',c'} \circ \Rcal_{a,d}^{d'}. $$

\begin{figure}
    \centering
    \includegraphics[width=0.8 \textwidth]{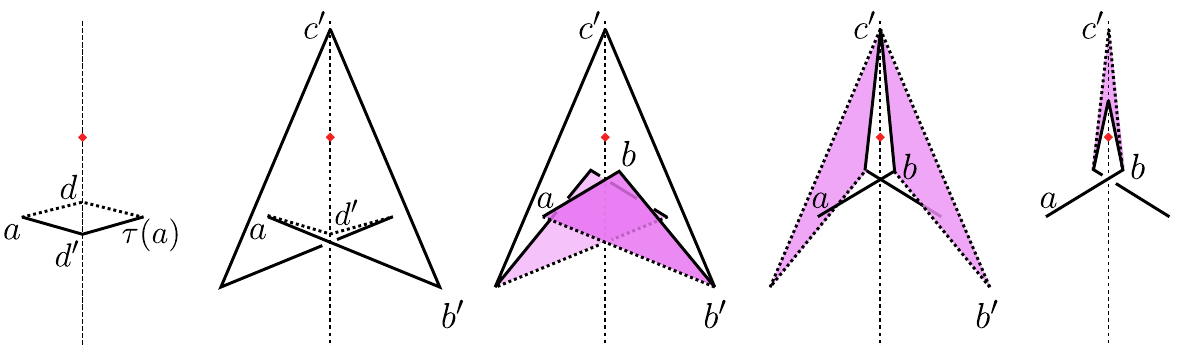}
    \caption{The factorization of an empty $\W$ move through $\infty$ into $\Rcal$ and $\Rtil$ moves and a $\W$ move of type B. This is the case of $bb'$ positive.}
    \label{fig:reallyemptywinftybb1}
\end{figure}

\begin{figure}
    \centering
    \includegraphics[width=0.8 \textwidth]{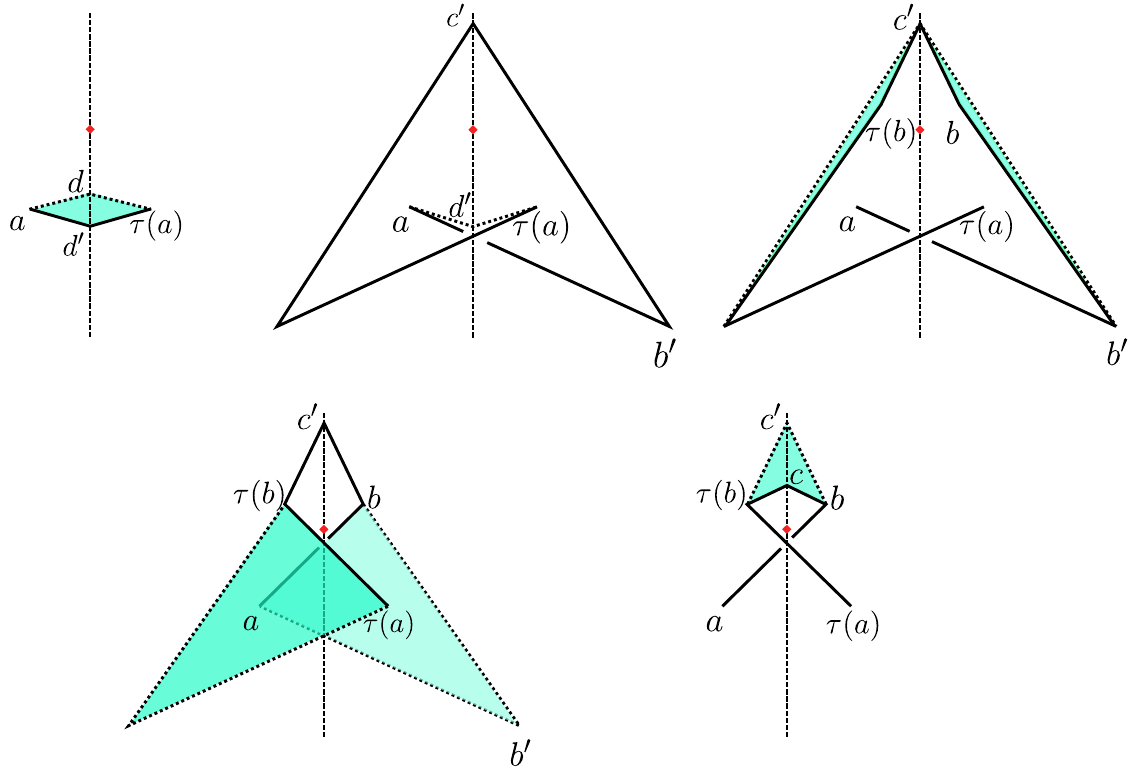}
    \caption{The factorization of an empty $\W$ move through $\infty$ into $\R$ and $\Rtil$ moves and a $\W$ move of type B. This is the case of $bb'$ negative.}
    \label{fig:reallyemptywinftyb1b}
\end{figure}

In both cases it is easy to check that the moves on the right-hand side are well-defined. We show that they are admissible. The only non obvious ones are $ (\Rtil_{b,c'}^{b'})^{-1} $, $\Rtil_{a,b'}^b$ in the first case and $(\Rtil_{a,b}^{b'})^{-1} $, $\Rtil_{b',c'}^b $ in the second case. 

Notice that $[a,b,b'] \cap \ax = \emptyset$. In fact $b$ and $b'$ lie on a plane containing $\ax$, and $a$ cannot lie on the same plane (otherwise $\W_{a,d}^{b,c}$ would not be well-defined in the first place). Since $[b,b'] \cap \ax = \emptyset$ (in fact $bc'$ and $b'c'$ are both positive) we conclude that indeed $ [a,b,b'] \cap \ax = \emptyset$.

Observe as well that $ [b,b',c']\cap \ax = \{c'\}$. In fact $bc' $ and $b'c'$ are both positive.

We now want to show that $ [a,b,b'] \cap [a,b,c',\tau(b),\tau(a),d'] $ coincides with $[a,b]$ and $ [b,b',c']\cap [a,b,c',\tau(b),\tau(a),d']$ coincides with $[b,c']$. Since $L$ is contained in $[a,b,c,\tau(b),\tau(a),d] $, it is contained in $ [a,b,c',\tau(b'),\tau(a),d']$, and this will imply that the above moves are admissible. 

First notice that, since $c'$ and $d'$ do not belong to $ [a,b,\tau(a),\tau(b)]$, one has that the convex set $[a,b,c',\tau(b'),\tau(a),d']$ is equal to $ C \cup   \tau(C)$
with $C=[a,b,c',d',\tau(b)] $. Notice that $C$ is the cone of the quadrilateral $Q=[b,c',d',\tau(b)] $ with vertex $a$. Since $ [b,b'] \cap Q = \{b\}$, we have that $[a,b,b'] \cap C= [a,b]$ and $[a,b,b'] \cap \tau(C)= \{b\} $, hence the conclusion in this case.
Now notice as well that $[b,b',c']\cap Q = [b,c']$ since $b \in [b',c',d']$, hence $[b,b',c'] \cap C \cup \tau(C)= [b,c']$. 

We can finally assume that the move $\W_{a,d}^{b,c}$ is empty and of type B. To conclude we just have to notice that by construction, $\pi(\Rcal_{a,d}^{d'}(L)\setminus ([a,d'] \cup \tau([a,d'])))\cap \pi([a,b',d'])=\emptyset,$ and,  that we can pick $H,K$ big enough, and $b'$ accordingly so that $\pi(L)\subset \pi(Q)$.
\end{enumerate}
\end{proof}

\subsection{Step 3: proof of \Cref{thm: markovlink}} The main result in this step is \Cref{thm: markovlink}. In order to prove it, we want to show that if $L$ and $L'$ are equivariantly isotopic and $h(L)=h(L')=0$ then we can obtain $L'$ from $L$ applying only admissible $\Rcal,\Rtil,\W,\Wtil$ moves. The proof of this fact is divided in two parts:
\begin{enumerate}[label=(\roman*)]
    \item We show in \Cref{prop: isolatemax} that we can obtain $ L'$ from $L$ with possibly another sequence of moves of type $\D, \Etil,\Stil,\Rcal,\Rtil, \W, \Wtil$ such that every move of this sequence is either applied to a link with $h=0$, or it modifies the link by adding a different number of negative edges than the ones that are removed, hence changing the value of $h$. The construction will guarantee that this new sequence of moves will not increase the maximum value of $h$ realized by this sequence.
    
    \item We show in \Cref{prop: reducemin} that we can modify the sequence of the previous part reducing isolated maxima.

\end{enumerate}
Applying iteratively \Cref{prop: isolatemax} and \Cref{prop: reducemin} one obtains the desired result. 
As a consequence, this will prove \Cref{thm: markovlink}.
\subsubsection{Proof of part (i)} 
We now proceed with the proof of Part (i).

\begin{nota}
    If the link $L'$ can be obtained by $L$ with one admissible move of type $\D,$ $\Etil,$ $\W,$ $\Wtil,$ $\Rcal,$ $\Rtil, $ $\Stil$ or the inverse of one of these moves, we write that $L \to L'$.
\end{nota}

Before proving \Cref{prop: isolatemax} we will need the following result (as explained in \Cref{rmk: concatenate}):

\begin{prop} \label{prop: sawtooth}
    If $\Stil_{p_0,\ldots,p_m}^{q_1,\ldots,q_m}$ and $\Stil_{r_0,\ldots,r_n}^{s_1,\ldots,s_n}$ are two admissible sawtooth moves such that $p_0p_m=r_0r_n=e$, then $\Stil_{r_0,\ldots,r_n}^{s_1,\ldots,s_n} \circ (\Stil_{p_0,\ldots,p_m}^{q_1,\ldots,q_m})^{-1}$ factorizes into admissible $\Rtil$ and $\Wtil$ moves and their inverses.
 \end{prop}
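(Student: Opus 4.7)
The composition $\Stil_{r_0,\ldots,r_n}^{s_1,\ldots,s_n} \circ (\Stil_{p_0,\ldots,p_m}^{q_1,\ldots,q_m})^{-1}$ first undoes the zigzag $P_1 = p_0 q_1 p_1 \cdots q_m p_m$ of the first sawtooth (recreating the negative edge $e$ and its image $\tau(e)$) and then produces the zigzag $P_2 = r_0 s_1 r_1 \cdots s_n r_n$ of the second sawtooth. The plan is to replace this two-step process with an equivalent sequence of admissible $\Rtil$ and $\Wtil$ moves, so that the complexity $h$ is never increased along the way. Since the sawtooth moves, as well as $\Rtil$ and $\Wtil$ moves, are by construction $\tau$-equivariant, it suffices to describe the transformation of $P_1$ into $P_2$; the action on the $\tau$-image is then handled simultaneously.

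The first key step is to refine both sawtooths so that they share a common subdivision of $e$. Let $t_0 = p_0 = r_0 < t_1 < \cdots < t_N = p_m = r_n$ be the common refinement of the subdivisions $\{p_i\}$ and $\{r_j\}$ along $e$. I would show that inserting an extra subdivision point $t \in (p_i, p_{i+1})$ into the zigzag, i.e.\ replacing the tooth $p_i \to q_{i+1} \to p_{i+1}$ by two teeth $p_i \to q' \to t \to q'' \to p_{i+1}$ with $q', q''$ chosen in the interior of the triangle $[p_i, q_{i+1}, p_{i+1}]$ close to $q_{i+1}$, can be realised by a sequence of admissible $\Rtil$ and $\Wtil$ moves: first apply a $\Wtil$ move on one of the positive edges (say $[p_i, q_{i+1}]$) to insert auxiliary vertices close to the original tooth, then reposition them via $\Rtil$ moves so that one of them lands on $t$, and finally collapse the superfluous segments with an inverse $\Wtil$. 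Iterating on both sawtooths, one eventually arrives at two zigzag paths with identical subdivision of $e$.

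Once both zigzags share the subdivision $t_0, \ldots, t_N$, the problem reduces to changing the apex of each tooth with base $[t_{j-1}, t_j]$ from the one inherited from the first sawtooth to the one of the second. For a single tooth, the set of apex positions $w$ such that $t_{j-1} w t_j$ has all positive edges and such that $[t_{j-1}, w, t_j]$ is disjoint from $L \setminus e$ and meets $\ax$ only in an endpoint of the base is open and, in a suitably small neighbourhood of $e$, path-connected. I would connect the two apex positions by a PL arc in this region and realise each elementary step (a small displacement of the apex through a thin triangle) by introducing two auxiliary vertices close to the current tooth via a $\Wtil$ move, sliding them toward the target apex with $\Rtil$ moves, and collapsing back to a single apex with an inverse $\Wtil$.

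The main technical difficulty, and the one I expect to occupy most of the actual proof, is verifying admissibility at every intermediate step: each auxiliary triangle and quadrilateral must be disjoint from the rest of the link, must intersect $\ax$ only where allowed (cf.\ \Cref{rmk: admissibleEtil}), and must be disjoint from its own $\tau$-image in the right sense for the underlying $\Etil$ or $\D$ moves to be admissible. As in the proof of \Cref{lem: sawtooth}, the plan is to choose all auxiliary points close enough to the supports of the original sawtooth moves, exploiting compactness of $L$ and the admissibility of the two starting sawtooth moves to guarantee the necessary disjointness; a convexity argument in the spirit of \Cref{lem: convex} then certifies the $\tau$-equivariant admissibility required for the $\Rtil$ and $\Wtil$ moves to be well-defined.
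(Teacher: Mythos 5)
The plan you describe — refine both sawtooth moves to share a common subdivision of $e$ (via $\Wtil$ moves inserting new vertices), then change the apex of each tooth one at a time (via $\Ttil$-type moves that factor into $\Rtil$ and $\Wtil$) — is precisely how the paper handles its Case~1, i.e.\ the situation where
\[
\bigl([p_i,p_{i+1},q_{i+1}]\setminus[p_i,p_{i+1}]\bigr) \cap \tau\bigl([r_k,r_{k+1},s_{k+1}]\setminus[r_k,r_{k+1}]\bigr)=\emptyset
\]
for all $i,k$. But this hypothesis is not automatic, and your proposal has a genuine gap when it fails. The issue is with the sequential apex-changing step. Suppose, as in the paper's Cases~2 and~3, that some tooth $T_{i_0}$ of the first sawtooth intersects $\tau(U_{k_0})$, where $U_{k_0}$ is a tooth of the second sawtooth and $i_0\neq k_0$. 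Then at the intermediate stage of your procedure where the apex over $[t_{k_0-1},t_{k_0}]$ has already been moved to its target position (coming from the second sawtooth) while the apex over $[t_{i_0-1},t_{i_0}]$ is still at its starting position (coming from the first sawtooth), the intermediate zigzag has two teeth whose triangles intersect each other's $\tau$-images. This violates the admissibility condition for a sawtooth move, so the intermediate configuration is not reachable by admissible $\Rtil$, $\Wtil$, or $\Stil$ moves — there is no valid sequence realizing your proposed apex path.

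You fold this into ``verifying admissibility at every intermediate step'' and propose handling it by choosing auxiliary points close to the supports of the \emph{original} sawtooth moves, but closeness to the original supports is exactly what causes the problem: the original supports $T_{i_0}$ and $\tau(U_{k_0})$ already collide. Also, your claim that the set of valid apex positions for a single tooth is path-connected in a small neighbourhood of $e$ is only about disjointness from $L\setminus e$ and from $\ax$; it ignores the constraint of disjointness from the $\tau$-images of the \emph{other} teeth, which is the constraint that can actually obstruct the path. The paper resolves this by constructing a third, auxiliary sawtooth move whose teeth avoid the relevant $\tau$-images of both the first and the second sawtooth (using a sheaf of near-vertical lines and a convexity argument as in the proof of \Cref{lem: sawtooth}), and then factoring the given composition through this intermediate sawtooth so that each of the two resulting compositions falls into the unobstructed Case~1. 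This extra layer is a substantive step, not bookkeeping, and it is missing from your proposal.
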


To prove this, let us first define an auxiliary move, called $\Ttil$, and let us first prove a simplified version of \Cref{prop: sawtooth}, that is stated in \Cref{lem: Ttil}.

\begin{dfn}
We define the move $\Ttil_{a,b,d}^c\coloneqq \Stil_{a,d}^c\circ (\Stil_{a,d}^b)^{-1}$ when the moves on the right-hand side are well-defined. We say that the $\Ttil$ move is admissible when the moves on the right-hand side are admissible and $ [a,b,d] \cap [\tau(a),\tau(d),\tau(c)]\subset [a,d]\cap \tau([a,d])$.
\end{dfn}

\begin{lem} \label{lem: Ttil}
An admissible $\Ttil$ move on a strongly involutive link $L$ factorizes into admissible $\Rtil$ and $\Wtil$ moves and their inverses.
\end{lem}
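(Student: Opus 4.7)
The $\Ttil_{a,b,d}^c$ move replaces the positive ``roof'' $[a,b] \cup [b,d]$ of the negative edge $[a,d]$ by the positive roof $[a,c] \cup [c,d]$. Since $\Rtil$ and $\Wtil$ moves both preserve the number of negative edges, no factorization into such moves can pass through the intermediate state in which $[a,d]$ reappears as an edge of the link. My plan is to factor the move geometrically inside the tetrahedron $[a,b,c,d]$ and its $\tau$-image, using the admissibility hypotheses to control the link and the equivariant interference.

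The base case is: assume additionally that the tetrahedron $[a,b,c,d]$ meets $L$ only in $[a,b] \cup [b,d]$, that $[a,b,c,d] \cap \tau([a,b,c,d]) \subseteq [a,d] \cap \tau([a,d])$, and that the edge $[b,c]$ is positive. I claim
\[
    \Ttil_{a,b,d}^{c} \;=\; (\Rtil_{c,d}^{b})^{-1} \circ \Rtil_{a,b}^{c}.
\]
Indeed, $\Rtil_{a,b}^{c}$ sends $[a,b]$ to $[a,c] \cup [c,b]$, producing the intermediate link with edges $[a,c] \cup [c,b] \cup [b,d]$, and then $(\Rtil_{c,d}^{b})^{-1}$ collapses $[c,b] \cup [b,d]$ into $[c,d]$. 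Admissibility of both $\Rtil$ moves reduces, via \Cref{rmk: admissibleEtil}, to verifying that the faces $[a,b,c]$ and $[b,c,d]$ are clear of the link and equivariantly separated; both sit inside the empty tetrahedron, and \Cref{lem: convex} together with the equivariant separation hypothesis takes care of the intersection with the $\tau$-image. The edge signs $ac, cd$ are positive by the sawtooth structure, and $bc$ by assumption.

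To reduce the general case to the base case, I will subdivide. I choose a PL path $\gamma$ from $b$ to $c$ inside the interior of $[a,b,c,d]$, disjoint from $\ax$, such that the ruled surface $\bigcup_{p \in \gamma}[a,p,d]$ meets $L$ only in $[a,d]$; this is possible because both faces $[a,b,d]$ and $[a,c,d]$ are already clear of $L$ (by the $\Stil$ admissibility hypotheses), and a small PL perturbation of $\gamma$ lets the sweep dodge any link arcs that traverse the interior of the tetrahedron. I then subdivide $\gamma$ so finely into $c_0 = b, c_1, \ldots, c_n = c$ that each thin tetrahedron $[a, c_i, c_{i+1}, d]$ satisfies the base-case hypotheses, obtaining the telescoping factorization
\[
    \Ttil_{a,b,d}^{c} \;=\; \Ttil_{a, c_{n-1}, d}^{c_n} \circ \cdots \circ \Ttil_{a, c_0, d}^{c_1}.
\]
For each factor with $[c_i, c_{i+1}]$ positive I apply the base case. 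If instead $[c_i, c_{i+1}]$ is negative, I insert an auxiliary apex $e$ near the midpoint for which $[c_i, e]$ and $[e, c_{i+1}]$ are both positive (obtained by a small rotation in the $(x,y)$-plane across the radial direction from $0$), split $\Ttil_{a, c_i, d}^{c_{i+1}} = \Ttil_{a, e, d}^{c_{i+1}} \circ \Ttil_{a, c_i, d}^{e}$, and apply the base case to each piece; if a remaining equivariant obstruction persists, I interpose a $\Wtil$ move on one of the positive edges of the thin tetrahedron, which allows the intermediate path to be routed around the obstruction without reintroducing $[a,d]$.

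The main obstacle will be verifying equivariant admissibility uniformly throughout. The key is to choose $\gamma$ and the subdivision so that each thin tetrahedron and each support of an intermediate $\Rtil$ or $\Wtil$ move stays disjoint from its $\tau$-image except along $[a,d] \cap \tau([a,d])$; the hypothesis $[a,b,d] \cap [\tau(a),\tau(d),\tau(c)] \subseteq [a,d] \cap \tau([a,d])$ combined with \Cref{lem: convex} is exactly what guarantees that a sufficiently fine subdivision has this property, closing the proof.
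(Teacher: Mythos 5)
Your base-case factorization
\[
\Ttil_{a,b,d}^{c} = (\Rtil_{c,d}^{b})^{-1} \circ \Rtil_{a,b}^{c}
\]
is correct, and it is genuinely simpler than what the paper produces in that situation. But the paper does not have a base case at all: it gives a direct construction for every admissible $\Ttil$ move. It introduces auxiliary vertices $a'$ (near $a$, on $[a,b]$) and $d'$ (near $d$, on $[c,d]$), and shows
\[
\Ttil_{a,b,d}^{c} = (\Rtil_{c,d}^{d'})^{-1} \circ (\Wtil_{d',d}^{a',b})^{-1} \circ \Wtil_{a,a'}^{c,d'} \circ \Rtil_{a,b}^{a'}.
\]
The point is that the six supporting triangles of these moves, namely $[a,a',b]$, $[a,a',c]$, $[c,a',d']$, $[d',d,a']$, $[a',d,b]$, $[c,d,d']$, are either thin slivers near an edge of $L$ or close to the two tooth triangles $[a,b,d]$, $[a,c,d]$; consequently they inherit the link- and $\ax$-avoidance from the admissibility of the two sawtooth moves. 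The stabilizing $\Wtil$ and destabilizing $\Wtil^{-1}$ add and remove a loop around $\bx$, and this extra room is what allows the re-routing from the $b$-tooth to the $c$-tooth to proceed without any further hypotheses on the tetrahedron $[a,b,c,d]$.

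The gap in your argument is the reduction to the base case. You need the tetrahedron $[a,b,c,d]$ (and then each thin tetrahedron $[a,c_i,c_{i+1},d]$) to avoid both the rest of $L$ and $\ax$. Neither is implied by admissibility of $\Ttil_{a,b,d}^{c}$: the hypotheses control the two triangles $[a,b,d]$, $[a,c,d]$ and the intersection $[a,b,d]\cap\tau([a,c,d])$, but say nothing about the faces $[a,b,c]$, $[b,c,d]$ or the interior. In particular $\ax$ can pass through the interior of $[a,b,c,d]$ while still missing both tooth triangles, and link arcs can enter through $[a,b,c]$ and exit through $[b,c,d]$. You assert that a PL path $\gamma$ from $b$ to $c$ inside the tetrahedron can be chosen so that the swept solid $\bigcup_{p\in\gamma}[a,p,d]$ avoids both obstructions, with ``a small PL perturbation'' to dodge link arcs, but this is exactly the nontrivial claim. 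Since the two tooth triangles are fixed (they are the endpoints of the sweep), and the sweep must connect them, it is not obvious that such a $\gamma$ exists in general; a link arc crossing the interior from $[a,b,c]$ to $[b,c,d]$, or $\ax$ cutting through the middle of the tetrahedron, could genuinely obstruct this. Your last sentence — interposing a $\Wtil$ move ``if a remaining equivariant obstruction persists'' — concedes that the $\Rtil$-only route can fail, but gives no construction or criterion, so the argument does not close. The paper's proof is precisely a careful, explicit version of that last-ditch $\Wtil$ idea, applied uniformly from the start.
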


\begin{proof}
Fix the move $\Ttil_{a,b,d}^c$. We suppose for simplicity that $[a,d]\cap \ax = \emptyset$, the case where one of the endpoints of $[a,d]$ lies in $\ax$ being analogous.
\begin{figure}
        \centering
        \includegraphics[width= 0.63 \textwidth]{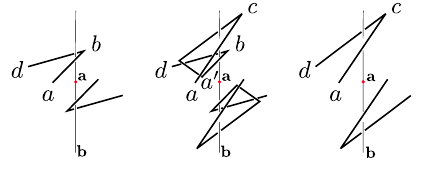}
        \caption{The proof of \Cref{lem: Ttil}. Notice that the red dot is the $\ax$ axis and the vertical line is the $\bx$ axis.}
        \label{fig:Ttilcaso1}
    \end{figure}
    Pick $a' $ close to $a$ and $[a,b]$ so that $aa', a'b$ are positive, $[a,b,a'] \cap L=[a,b]$ and $[a,b,a']\cap \ax= \emptyset$. Let $d'$ be close to $d$ and $[c,d]$ so that $cd',d'd$ are positive, $[c,d,d'] \cap L=[c,d]$ and $[c,d,d'] \cap \ax= \emptyset$. Notice that $\Rtil_{a,b}^{a'}$ and $\Rtil_{c,d}^{d'}$ are admissible moves. Moreover, for a generic choice of $ a'$ and $d'$, the moves $\Wtil_{a,a'}^{c,d'}$ and $\Wtil_{d',d}^{a',b}$ are well-defined.
    One can see that in this case (see \Cref{fig:Ttilcaso1})
    $$\Ttil_{a,b,d}^c=(\Rtil_{c,d}^{d'})^{-1}\circ (\Wtil_{d,d'}^{a',b})^{-1} \circ \Wtil_{a,a'}^{c,d'}\circ \Rtil_{a,b}^{a'}.$$

\vspace{-27pt}
\end{proof}

We are now ready to prove \Cref{prop: sawtooth}.

 \begin{proof}[Proof of \Cref{prop: sawtooth}]
Let $\Stil_{p_0,\ldots,p_m}^{q_1,\ldots,q_m}$ and $\Stil_{r_0,\ldots,r_n}^{s_1,\ldots,s_n}$ be two admissible sawtooth moves such that $p_0p_m=r_0r_n=e$. We want to show that $\Stil_{r_0,\ldots,r_n}^{s_1,\ldots,s_n} \circ (\Stil_{p_0,\ldots,p_m}^{q_1,\ldots,q_m})^{-1}$ factorizes into admissible $\Rtil$ and $\Wtil$ moves. Let $L$ be the link obtained after applying the move $ (\Stil_{p_0,\ldots,p_m}^{q_1,\ldots,q_m})^{-1}$ (hence $e \subset L$).
\textbf{ Case 1:} suppose that $([p_i,p_{i+1},q_{i+1}]\setminus[p_i,p_{i+1}]) \cap \tau ([r_k,r_{k+1},s_{k+1}] \setminus[r_k,r_{k+1}] ) = \emptyset$ for all $i,k$.
 
     Suppose that $r_j \notin \{p_0,\ldots,p_m\}$, and suppose it is in the interior part of $[p_i,p_{i+1}]$. 
     \begin{figure}
         \centering
         \includegraphics[width= 0.43 \textwidth]{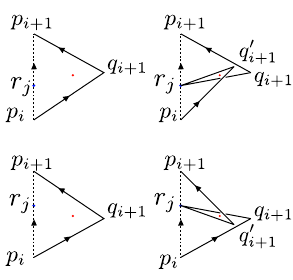}
         \caption{Adding a basepoint to the sawtooth in Case 1. The case where $q_{i+1}r_j$ is negative (above) and the case where $q_{i+1}r_j$ is positive (below).}
         \label{fig:corollario}
     \end{figure}
     Up to changing $q_{i+1}$ with another point in a neighbourhood of $q_{i+1}$ with an admissible $\Ttil$ move, by \Cref{lem: Ttil} move we can suppose that $q_{i+1}r_j $ is either positive or negative. \\
     If $q_{i+1}r_j$ is positive, then pick $q'_{i+1}$ very close to the triangle $[p_i,p_{i+1},q_{i+1}]$ so that $r_jq_{i+1}'$ and $q_{i+1}'p_{i+1}$ are positive. It is easy to check that the edge $q_{i+1}'q_{i+1} $ is positive. 

     In fact $\pi(p_i)$ and $\pi(r_j)$ must lie on the same side with respect to the line containing $0$ and $\pi(q'_{i+1})$. Since $ 0 \in \pi([p_i,q_{i+1},r_j])$ and $ r_jq'_{i+1}$ are positive, this implies that $q_{i+1}'q_{i+1} $ is positive.
     
     Hence $$ (\Stil_{p_0,\ldots,p_m}^{q_1,\ldots,q_m})^{-1}     = (\Stil_{p_0,\ldots,p_i,r_j,p_{i+1},\ldots, p_m}^{q_1,\ldots,q_{i+1},q'_{i+1},q_{i+2},\ldots,q_m} )^{-1}\circ \Wtil_{q_{i+1},p_{i+1}}^{r_{j}, q'_{i+1}}$$
     (see bottom of \Cref{fig:corollario}).

     If $q_{i+1}r_j$ is negative, then pick $q'_{i+1}$ very close to the triangle $[p_i,p_{i+1},q_{i+1}]$ so that $p_{i}q_{i+1}'$, and $q_{i+1}'r_j$ are positive. One can check as before that the edge $q_{i+1}q_{i+1}' $ is positive. Hence
     $$\left(\Stil_{p_0,\ldots,p_m}^{q_1,\ldots,q_m}\right)^{-1}     = \left(\Stil_{p_0,\ldots,p_i,r_j,p_{i+1},\ldots, p_m}^{q_1,\ldots,q'_{i+1},q_{i+1},q_{i+2},\ldots,q_m} \right)^{-1}\circ \Wtil_{p_{i},q_{i+1}}^{q'_{i+1},r_{j}}$$
     (see top of \Cref{fig:corollario}).
     Proceeding in this way for all $r_j \notin \{p_0,\ldots,p_m\}$ one can suppose that $\{r_0,\ldots,r_n\} \subset \{p_0,\ldots,p_m\}$. Notice that if we pick $q'_i$ close enough to $ [p_i,p_{i+1},q_{i+1}]$ then the initial requirement on the mutual intersection of the triangles is still valid.
     
     If one applies a symmetrical argument to the $p_i \notin \{r_0,\ldots,r_n\}$ one can actually suppose that $\{r_0,\ldots,r_n\} = \{p_0,\ldots,p_m\}$.

     If $m=n$ and $p_i=r_i$ for all $i$, then $\Stil_{r_0,\ldots,r_n}^{s_1,\ldots,s_n} \circ (\Stil_{p_0,\ldots,p_m}^{q_1,\ldots,q_m})^{-1}$ is the composition of $\Ttil_{p_i,q_{i+1},p_{i+1}}^{s_{i+1}}$ for $i=0,\ldots,n-1$, that are well-defined and admissible because of our initial hypothesis on the triangles still holds. By \Cref{lem: Ttil} these moves all factorize into $\Rtil$ and $\Wtil$ moves.

     \textbf{Case 2:} suppose that there exist $i_0,k_0$ such that $$([p_{i_0},p_{i_0+1},q_{i_0+1}]\setminus[p_{i_0},p_{i_0+1}]) \cap \tau ([r_{k_0},r_{k_0+1},s_{k_0+1}] \setminus[r_{k_0},r_{k_0+1}] ) $$ is not empty and $e$ and $\tau(e)$ do not project to a double point in the diagram.  
     
     For all $i=0,\ldots,m-1$ let $a_i$ be the intersection point of $T_i=[p_i,p_{i+1},q_{i+1}]$ and $ \bx $, and for all $k=0,\ldots,n-1$ let $b_k$ be the intersection point of $U_j=[r_j,r_{j+1},s_{j+1}] $ and $ \bx $. 
     Let $q_{i+1}' \in T_i$ be close to $a_i$ such that $p_iq'_{i+1} $ and $q'_{i+1} p_{i+1}$ are positive and let $s'_{j+1} \in u_j$ be very close to $b_j$ such that $r_js'_{j+1} $ and $s'_{j+1} p_{j+1}$ are positive. 
     Notice that $\Stil_{p_0,\ldots,p_m}^{q'_1,\ldots,q'_m} \circ (\Stil_{p_0,\ldots,p_m}^{q_1,\ldots,q_m})^{-1}$ and $\Stil_{r_0,\ldots,r_n}^{s_1,\ldots,s_n} \circ (\Stil_{r_0,\ldots,r_n}^{s'_1,\ldots,s'_n})^{-1} $ fall into the previous case, hence we would like to prove the theorem for $ (\Stil_{r_0,\ldots,r_n}^{s'_1,\ldots,s'_n})^{-1}\circ \Stil_{p_0,\ldots,p_m}^{q'_1,\ldots,q'_m}.$

     Call $T'_i= [p_i,p_{i+1},q'_{i+1}]$ and $ U'_j= [r_j,r_{j+1},s'_{j+1}]$.
     
     The $a_i$'s and the $b_j$'s are a finite number of points in $\bx$, and there exists a limited closed segment $l$ in $\bx \setminus \{\infty\}$ that contains all of them.
     We would like to construct an admissible sawtooth move on $e$ such that the triangles of the move do not intersect in their interior the set $C\cup \tau(C)$, where $C$ is the convex hull of $e$ and $l$.
     For every $x \in e$ let $r_x$ be the line parallel to $\bx\setminus \{\infty\}$ and containing $x$. Let $S= \bigcup_{x \in e}r_x$. Notice that $S \cap \tau(C) \subset \partial e$. In fact $\pi(\tau(C))$ is contained in one of half-planes delimited by $\pi(\ax)$ and $\pi(S)=\pi(e)$ is contained in the other one.

     Moreover $C \cap S = e$. In fact $0 = \pi(l)$ is contained in the interior of one of the two half-planes delimited by the line through $\pi(e)$. This implies that $l $ is contained in the interior of one of the two half-spaces delimited by the plane containing $S$. Hence $C$ is contained in this half-space and $C$ intersects the delimiting plane only in $e$, thus $C   \cap S=e$.

     Notice that if one rotates the set $S$ of a small angle $\theta$, fixing $e$ (as formalized in the proof of \Cref{lem: sawtooth}), the resulting set $\widetilde{S}$ has the same intersection with $C$ and $\tau(C)$ as $S$.

     With the same methods of the proof of \Cref{lem: sawtooth} we can find $t_0,\ldots,t_k \in e$ and $u_1,\ldots, u_k$ such that $t_0t_k=e$ and $\Stil_{t_0,\ldots,t_h}^{u_1,\ldots,u_h}$ is admissible on $L $.
     If we call $V_l=[t_l,t_{l+1},u_{l+1}]$, then $ V_l\cap C= [t_l,t_{l+1}]$ and $V_l\cap \tau(C)\subset \partial e$ for $l=0,\ldots,k-1$. 
     
       Up to taking the $q'_i $'s and $s'_j$'s very close to $\bx$, the triangles $T'_i$ and $U'_j$ are also disjoint from the $V_j$'s and $\tau(V_j)$'s by construction. Hence 
      $$\Stil_{r_0,\ldots,r_n}^{s'_1,\ldots,s'_n}\circ (\Stil_{p_0,\ldots,p_m}^{q'_1,\ldots,q'_m})^{-1} = (\Stil_{r_0,\ldots,r_n}^{s'_1,\ldots,s'_n}\circ (\Stil_{t_0,\ldots,t_k}^{u_1,\ldots,u_k})^{-1}) \circ (\Stil_{t_0,\ldots,t_k}^{u_1,\ldots,u_k}\circ (\Stil_{p_0,\ldots,p_m}^{q'_1,\ldots,q'_m})^{-1}) $$
      and by construction the right-hand side is the composition of two operations that fall into Case 1. We therefore get the conclusion in this case.
      
     \textbf{Case 3:} suppose that there exist $i_0,k_0$ such that $$([p_{i_0},p_{i_o+1},q_{i_0+1}]\setminus[p_{i_0},p_{i_0+1}]) \cap \tau ([r_{k_0},r_{k_0+1},s_{k_0+1}] \setminus[r_{k_0},r_{k_0+1}] ) $$ is not empty and $e$ and $\tau(e)$ project to a double point in the diagram. 
     
     Let $ T_i=[p_i,p_{i+1},q_{i+1}]$ and $ U_j= [r_j,r_{j+1},s_{j+1}]$. For $x \in e$, let $r_x$ be the line containing $x$ and parallel to $\bx \setminus \{\infty\}$. Let $S= \bigcup_{x \in e} r_x$. Notice that $e$ separates $S$ in two components, let $hS$ be the closure of the component that does not intersect $\ax$. By the proof of \Cref{lem: sawtooth} we can find $ t_0,\ldots, t_k \in e$, $ u_1,\ldots, u_k$ such that $t_0t_k=e$ and $\Stil_{t_0,\ldots,t_h}^{u_1,\ldots,u_h}$ is admissible on $L $ and $V_l= [t_l,t_{l+1},u_{l+1}]$ is contained in a rotation of $hS$ by a small angle $\theta_l$. If the $\theta_l$'s are small enough we can suppose that there exist $l$ such that $\tau(T_i) \cap V_l \neq \emptyset$ if and only if $ \tau(T_i) \cap hS\neq \emptyset$, and that $\tau(U_j) \cap V_l \neq \emptyset$ if and only if $ \tau(U_j) \cap hS\neq \emptyset$. 
     Notice that $$\Stil_{r_0,\ldots,r_n}^{s_1,\ldots,s_n} \circ (\Stil_{p_0,\ldots,p_m}^{q_1,\ldots,q_m})^{-1} = (\Stil_{r_0,\ldots,r_n}^{s_1,\ldots,s_n}\circ (\Stil_{t_0,\ldots,t_h}^{u_1,\ldots,u_h})^{-1}) \circ (\Stil_{t_0,\ldots,t_h}^{u_1,\ldots,u_h}\circ (\Stil_{p_0,\ldots,p_m}^{q_1,\ldots,q_m})^{-1}). $$
     We now deal with $\F=\Stil_{t_0,\ldots,t_h}^{u_1,\ldots,u_h}\circ (\Stil_{p_0,\ldots,p_m}^{q_1,\ldots,q_m})^{-1}$. Notice that $ \Stil_{r_0,\ldots,r_n}^{s_1,\ldots,s_n}\circ (\Stil_{t_0,\ldots,t_h}^{u_1,\ldots,u_h})^{-1}$ is the inverse of an operation with the same characteristics as $\F$, hence if we are able to factorize $\F$ into admissible $\Rtil$ and $\Wtil$ moves we are done.

     Let $s\subset \R^3$ be the line containing $e$ and let $H_0\subset \R^2$ be the half-plane delimited by $\pi(s)$ and containing $0$, and $H_1$ be the half-plane delimited by $\pi(\tau(s))$ and containing $0$, i.e. the half-plane obtained reflecting $H_0$ along $\pi(\ax)$.
     We pick as in the previous case $ q'_{i+1}\in T_i$ and $ u'_{l+1}\in V_l$ close to $\bx$ for every $i,l$. We want them close enough to $\bx$ so that $ \pi(q'_i)$ and $\pi(u'_l)$ lie in $ H_0 \cap H_1$. Let $T'_i= [p_i,p_{i+1},q'_{i+1}]$ and $V'_l= [t_l,t_{l+1},u'_{l+1}]$.

     Notice that $\Stil_{p_0,\ldots,p_m}^{q'_1,\ldots,q'_m} \circ (\Stil_{p_0,\ldots,p_m}^{q_1,\ldots,q_m})^{-1}$ and $\Stil_{t_0,\ldots,t_k}^{u_1,\ldots,u_k} \circ (\Stil_{t_0,\ldots,t_k}^{u'_1,\ldots,u'_k})^{-1} $ fall into Case 1, hence we would like to prove the theorem for $ (\Stil_{t_0,\ldots,t_k}^{u'_1,\ldots,u'_k})^{-1}\circ \Stil_{p_0,\ldots,p_m}^{q'_1,\ldots,q'_m}.$

     Let $I \subset \{0,\ldots,m-1\}$ be the set of $i$'s such that $\tau(T'_i) \cap hS \neq \emptyset$. 
     Let $p \in e$ be the point that projects via $\pi$ to a double point with $\tau(p) \in \tau(e)$. Let $\overline{i} \in \{0,\ldots,m-1\}$ be such that $ p \in \operatorname{Int}([p_{\overline{i}},p_{\overline{i}}])$. Such $\overline{i}$ always exist since the double points in the diagram cannot correspond to vertices. Notice that $\overline{i} \notin I$. In fact $\tau(p) \in S \setminus hS$ and if $\tau(T'_{\overline{i}}) \cap hS \neq \emptyset$ then by convexity $\tau(T'_{\overline{i}}) \cap e \neq \emptyset$, which is impossible. 

     If $\pi([p_i,p_{i+1}]) \subset H_1$ then $i \notin I$. In fact this implies that $ \pi(\tau(p_i)), \pi(\tau(p_{i+1}))$ and $\pi(\tau(q'_{i+1}))$ lie in the interior of $H_0$, hence $ \pi(\tau(T'_i)) \cap \pi(hS)= \emptyset$, and thus $ T'_i \cap hS= \emptyset$. 

     Now let $i_1 \in I$. We know that $ \pi([p_{i_1},p_{i_1+1}]) \cap H_1 = \emptyset. $
     Notice that, for every $x \in [p_{i_1},p_{i_1+1}]$, $\tau(r_x) \cap T'_{i}=\tau(r_x) \cap V'_l= \emptyset$. In fact $ \pi(T'_i)$ and $\pi(V'_l)$ are contained in $H_0$ for every $i$, and $ \pi(r_x)= \pi(x) \notin H_1$ implies that $ \pi(\tau(r_x)) \notin H_0$.
     
     Suppose for now that for every $i \in I$ there are no double points for $\pi$ in $[p_{i},p_{i+1}]$. 
     Hence (as in the proof of \Cref{lem: sawtooth}) we can find a point $q''_{i_1+1}$ such that the triangle $T''_{i_1}=[p_{i_1},p_{i_1+1},q''_{i_1+1}]$ is contained in a rotation of a very small angle $\theta$ of $ \bigcup_{x\in [p_{i_1},p_{i_1+1}]} r_x$, and $p_{i_1}q_{i_1+1}$ and $q_{i_1+1}p_{i_1+1} $ are positive, $ T''_{i_1}\cap \ax = \emptyset$ and $ T''_{i_1} \cap L = [p_{i_1},p_{i_1+1}]. $
     
     Moreover if the angle $\theta$ is small enough,  $\tau(T''_{i_1})\cap T'_i= \tau(T''_{i_1})\cap V'_l = \emptyset $.
     Apply this procedure to every $i \in I$ to find $q''_{i+1}$, and for every $i \notin I$, let $q''_{i+1} = q'_{i+1}$.

     We have that $$ \Stil_{t_0,\ldots,t_k}^{u'_1,\ldots,u'_k}\circ (\Stil_{p_0,\ldots,p_m}^{q'_1,\ldots,q'_m})^{-1}= (\Stil_{t_0,\ldots,t_k}^{u'_1,\ldots,u'_k} \circ (\Stil_{p_0,\ldots,p_m}^{q''_1,\ldots,q''_m})^{-1}) \circ ( \Stil_{p_0,\ldots,p_m}^{q''_1,\ldots,q''_m} \circ (\Stil_{p_0,\ldots,p_m}^{q'_1,\ldots,q'_m})^{-1}) $$
     and by construction the right-hand side is the composition of two operations that fall into Case 1. 
     
     In the case where there exist $i \in I$ and $y \in [p_i,p_{i+1}]$ double point for $\pi$, we can subdivide $[p_i,p_{i+1}] $ as in the proof of \Cref{lem: sawtooth} in smaller segments. More precisely, we find $p_i^0, \ldots, p_i^{h_i} \in [p_{i},p_{i+1}]$ such that $p_{i}p_{i+1}=p_i^{1} p_i^{h_i} $, and $ {q}_{i+1}^1,\ldots , {q}_{i+1}^{h_i}$ such that $[p_i^{j},p_i^{j+1},{q}_{i+1}^{j+1}]$ lies in a rotation of a very small angle $\theta_j$ of $\bigcup_{x \in [p_i,p_{i+1}]} r_x$, the segments
     $ p_i^{j}{q}_{i+1}^{j+1}$ and ${q}_{i+1}^{j+1} p_i^{j+1}$ are positive, 
     $[p_i^{j},p_i^{j+1},{q}_{i+1}^{j+1}]\cap \ax = \emptyset$ and $[p_i^{j},p_i^{j+1},{q}_{i+1}^{j+1}]\cap L = [p_i^{j},p_i^{j+1}]$. Moreover if $\theta_l$ is small enough $\tau([p_i^{j},p_i^{j+1},{q}_{i+1}^{j+1}]) \cap T'_i = \tau([p_i^{j},p_i^{j+1},{q}_{i+1}^{j+1}]) \cap V'_l = \emptyset $. 
     Hence up to adding to $\{p_0,\ldots,p_m\}$ the points $p_i^j$ and to $ \{{q''}_1,\ldots, {q''}_m\}$ the points ${q}_{i+1}^j$ for every $i \in I$ as above, for $j=i,\ldots ,h_i$.
This concludes the proof.
 \end{proof}

\begin{rmk}\label{rmk: concatenate}
    We will need \Cref{prop: sawtooth} in the proof of \Cref{prop: isolatemax} for the following reason: if we have $L \xrightarrow{\F} L'$ with $h(L)=h(L')>0$, sometimes we will need to factorize $\F$ into a composition of other moves $ \F= \F_1 \circ \ldots \circ \F_n $, with $\F_i$ preserving $h$ for every $i$. We will then find for every $i$ two $\Stil$ moves $\Stil_1^i,\Stil_2^i$ such that 
    $$ \begin{tikzcd}\left(\F_{i+1}\circ \cdots \circ \F_n (L)\right) \arrow[r,"\Stil_1^i"] & L_1 \arrow[r] &\cdots \arrow[r] & L_k \arrow[r,"(\Stil_2^i)^{-1}"] & \left(\F_i\circ \cdots \circ \F_n(L) \right)\end{tikzcd}$$ 
    with $h(L_i) < h(L). $ Moreover, for every $i=1,\ldots n-1$, the moves $\Stil_2^i$ and $\Stil_1^{i+1}$ will be applied to the same edges. Thus, by \Cref{prop: sawtooth}, $\Stil_1^{i+1}\circ(\Stil_2^i)^{-1} $ factorizes into admissible $\Rtil$ and $\Wtil$ moves, and recall that these moves preserve $h$. Hence we will be able to concatenate the sequences obtained for every $\F_i$, obtaining a sequence
    $$ L \to L_1' \to \ldots \to L'_m \to L'$$ such that $h(L'_j) < h(L)$ for $j= 1,\ldots, m.$
\end{rmk}

We are now ready to prove the main result of part (i) of Step 3.
\begin{prop}\label{prop: isolatemax}
If $L\to L'$ and $h(L)=h(L')>0$ then there exist $L_1,\ldots, L_n$ such that  $L\to L_1\to \ldots \to L_n\to L'$
and $h(L_i) < h(L)$ for $i=1,\ldots,n$.
\end{prop}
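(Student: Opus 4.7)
The plan is to case-split on the type of the single move $\F \colon L \to L'$. The first step is to classify which moves can preserve $h$ when $h(L)>0$. Moves of type $\Rcal$, $\Rtil$, $\W$, $\Wtil$ only involve positive edges, so they automatically preserve $h$. A sawtooth move $\Stil^{\pm 1}$ with $m\ge 1$ teeth changes $h$ by $\pm 2m$, so $\F=\Stil^{\pm 1}$ is impossible. For $\Etil_{a,c}^b$, the change in $h$ equals $2(\#\text{neg}\{[a,b],[b,c]\}-\#\text{neg}\{[a,c]\})$; combined with the requirement that at least one of the involved edges is negative (otherwise $\F$ would be $\Rtil$), preservation forces $[a,c]$ to be negative and exactly one of $[a,b],[b,c]$ to be negative. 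An analogous sign analysis for $\D^b_{a,c}$ forces all four edges $[a,c],[c,\tau(a)],[a,b],[b,\tau(a)]$ to be negative.

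In every case the core idea is the same: detour via a sawtooth move that lowers $h$, perform $\F$ at the lower complexity, then reverse the sawtooth. When some negative edge $e$ of $L$ lies outside the compact support of $\F$, I would enclose that support in a $\tau$-invariant compact convex $\Omega$ disjoint from $e$; if $\F$ is of type $\Rcal$, $\Rtil$, $\W$, or $\Wtil$, I first use \Cref{lem: reduceRtil}, \Cref{lem: reduceR}, and \Cref{prop: really empty} to factor $\F$ into pieces with arbitrarily small or really empty support so that each piece's $\Omega$ can be kept disjoint from $e$. Then \Cref{lem: sawtooth} produces an admissible sawtooth $\Stil$ on $e$ avoiding $\Omega$, so $\Stil$ commutes with each piece of $\F$, and the detour $L \xrightarrow{\Stil}\Stil(L)\xrightarrow{\F}\Stil(L')\xrightarrow{\Stil^{-1}} L'$ has intermediate links of complexity $h(L)-2<h(L)$. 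Concatenating the detours over the pieces of $\F$, using \Cref{prop: sawtooth} as in \Cref{rmk: concatenate} to splice consecutive sawteeth into positive moves, yields the required sequence.

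The remaining sub-case is when $\F$ is an $\Etil$ or $\D$ move and every negative edge of $L$ lies in the support of $\F$; then $\F$'s negative edges exhaust the negative edges of $L$, e.g.\ $[a,c]$ and $\tau([a,c])$ for $\Etil_{a,c}^b$. Here I would apply an admissible sawtooth $\Stil$ on $[a,c]$ in $L$ and an admissible sawtooth $\Stil'$ on the corresponding negative edge of $L'$ (namely $[a,b]$ in the $h$-preserving $\Etil$ configuration, respectively $[a,b]$ in the $\D$ case), obtaining $L_1=\Stil(L)$ and $L_n=\Stil'(L')$ with $h(L_1)=h(L_n)=h(L)-2$. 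The task reduces to bridging $L_1$ and $L_n$ by a sequence of positive moves. By choosing the two sawteeth to share a common tooth apex $q$, the links $L_1$ and $L_n$ differ only in whether the polygonal arc from $q$ to $c$ passes through $b$; this difference is realised by a single $\Rtil_{q,c}^b$ in the $\Etil$ case (or $\Rcal_{q,c}^b$ in the $\D$ case), provided the new edge $[q,b]$ is positive.

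The main obstacle will be precisely this last sub-case: ensuring that $q$ can be chosen so that both sawteeth are admissible and simultaneously $[q,b]$ is positive, and that the resulting $\Rtil$ or $\Rcal$ is admissible. The open region of valid tooth apices furnished by \Cref{lem: sawtooth} should generically afford enough freedom, but a careful geometric verification is needed and may require first performing an auxiliary $\Rtil$ to reposition $q$. When a single positive move does not suffice to connect $L_1$ and $L_n$, one falls back to a short composition of $\Rtil$, $\Rcal$, $\W$, $\Wtil$ moves, again invoking the factorization tools from Step 2 to keep the complexity along the bridge bounded by $h(L)-2$.
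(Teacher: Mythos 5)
Your overall architecture matches the paper's: classify the $h$-preserving moves (your classification is correct, though note a sawtooth always changes $h$ by exactly $\pm 2$, not $\pm 2m$, since it replaces the pair $e, \tau(e)$ by positive edges — the conclusion that $\Stil^{\pm1}$ cannot preserve $h$ still holds), then for the positive moves detour through a sawtooth that lowers $h$ and commutes with the move, and for the $h$-preserving $\Etil$ and $\D$ moves give a direct factorization. But two steps have genuine gaps.

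First, for $\Rtil$ (and $\Wtil$) you want a $\tau$-invariant compact convex $\Omega$ containing the support and disjoint from $e$. Since $\Omega$ must by $\tau$-invariance and convexity contain the convex hull of $[a,b,c]\cup\tau([a,b,c])$, it necessarily crosses the axis $\ax$, and this hull can intersect $e$ no matter how small the triangle is made — for instance when $e$ runs between $[a,b,c]$ and $\tau([a,b,c])$. So you cannot invoke \Cref{lem: sawtooth} as a black box here. The paper handles this asymmetrically: it uses the convex-set clause of \Cref{lem: sawtooth} only against $[a,b,c]$, and keeps the sawtooth triangles away from $\tau([a,b,c])$ by observing that they can be taken inside a sheaf of lines parallel to $\bx$ rotated by an arbitrarily small angle, which for small enough angle misses $\tau([a,b,c])$. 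For $\Rcal$ (and the non-$\infty$ $\W$) the support \emph{is} $\tau$-invariant and convex, so your $\Omega$ argument is fine there, but this does not carry over to $\Rtil,\Wtil$.

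Second, the $\Etil/\D$ case is where the real content lies and your proposal does not construct the bridge. Choosing a single shared tooth apex $q$ and closing with one $\Rtil_{q,c}^b$ (or $\Rcal$) cannot work in general: when $[a,c]$ and $[a,b]$ project with crossings, each needs several teeth, and the triangles over $[a,c]$ cannot be moved onto triangles over $[a,b]$ by one triangle move without increasing $h$ in between. The paper's construction subdivides $[a,b]$ into $b_0,\ldots,b_n$ and $[a,c]$ into $c_0,\ldots,c_n$ with common apices $d_1,\ldots,d_n$, applies $\Stil_{c_0,\ldots,c_n}^{d_1,\ldots,d_n}$, then \emph{tips the teeth over one at a time} via moves $\F_i$ (each a composition of at most two $\Rtil$'s, with sign cases as in \Cref{fig:Fm1}–\Cref{fig:Fmn}), and finishes with $(\Stil_{b_0,\ldots,b_n}^{d_1,\ldots,d_n})^{-1}$; when $b$ and $c$ are far apart one further subdivides $[b,c]$ and concatenates via \Cref{rmk: concatenate}. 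The $\D$ case is handled analogously, with the additional trick that a $\D$ through $\infty$ is first split into two $\D$ moves via an auxiliary point of $\ax$. Your "fall back to a short composition of $\Rtil,\Rcal,\W,\Wtil$" is precisely the part that needs to be proved, and without the subdivision and the $\F_i$'s it is not a proof. Also note that the case split you introduce (whether $L$ has a negative edge outside the support of $\F$) is unnecessary — the paper's factorization handles every $\Etil/\D$ uniformly — and the detour you propose when such an edge exists inherits the same $\Omega$ problem as in the first gap.
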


\begin{proof}
Let us call $\Ftil$ the move that transforms $L$ into $L'$. The only moves that preserve $h$ are $\Rcal,\Rtil,\W,\Wtil$, $ \Etil$ when it replaces a pair of negative edges with two positive and two negative edges, and $\D$ when it substitutes two negative edges with other two negative edges (and the inverses of these moves).

For what concerns $\Rcal,\Rtil,\W,\Wtil$, since they are applied on positive edges, there must be an edge $e$ not involved in the move that is negative. 
\begin{itemize}[wide, labelwidth=0pt, labelindent=0pt]
    \item In the case of $\Ftil=\Rtil_{a,c}^b$, for any $x \in e$ let $r_x$ be the line parallel to $\bx$ and containing $x$. 
Suppose for now that no $x \in e$ is such that $ \pi(x)=\pi(\tau(x))$, i.e. $\pi(x) \notin \pi(\ax \setminus \{\infty\})$. Then the projection of $e$ has positive distance $d$ from $\pi(\ax \setminus \{\infty\})$. 

By \Cref{lem: reduceRtil} and \Cref{rmk: concatenate} we can suppose that the triangle $[a,b,c]$ has diameter strictly less than $ d$. 
Hence the sheaf of lines $ \bigcup_{x \in e} r_x$ intersects at most one of the two triangles $[a,b,c]$ and $\tau([a,b,c])$, let us suppose without loss of generality that it does not intersect $\tau([a,b,c])$.

Now one proceeds as in the proof of \Cref{lem: sawtooth} to find $p_0,\ldots,p_{n} \in e$ and $q_1,\ldots, q_n\notin L$ such that $p_0p_n=e$ and $\Stil_{p_0,\ldots,p_n}^{q_1,\ldots,q_n} $ is an admissible sawtooth move such that for $i=0,\ldots,n-1$ $ [p_i,p_{i+1},q_{i+1}]$ does not intersect the convex set $ [a,b,c]$. One has to realize that in that construction we can pick the triangles $ [p_i,p_{i+1},q_{i+1}]$ to lie in a sheaf of lines obtained by rotating $\bigcup_{x \in e} r_x$ of a very small angle $\theta$. If $\theta$ is small enough then this rotated sheaf of lines will not intersect $\tau([a,b,c]).$ This implies that $\Rtil_{a,c}^b$ is admissible on $L_1=\Stil_{p_0,\ldots,p_n}^{q_1,\ldots,q_n}(L)$, and $(\Stil_{p_0,\ldots,p_n}^{q_1,\ldots,q_n})^{-1}$ is admissible on $ L_2= \Rtil_{a,c}^b(L_1).$ Hence 
$L \xrightarrow{\Stil_{p_0,\ldots,p_n}^{q_1,\ldots,q_n}} L_1 \xrightarrow{\Rtil_{a,c}^b} L_2 \xrightarrow{(\Stil_{p_0,\ldots,p_n}^{q_1,\ldots,q_n})^{-1}} L' $ and $h(L_1)=h(L_2) < h(L)$.

Suppose now that there exists $x \in e$ such that $\pi(x)=\pi(\tau(x)).$ Then, either $x = \tau(x) \in \ax$, or $ x$ and $\tau(x)$ project to a double point in the diagram and therefore $x$ lies in the interior of $e$. In any case, $x\notin [a,c]$ and therefore $x$ has positive distance $d$ from $[a,b,c]$. 
Let $hr_x \subset r_x$ be an half-line with endpoint $x$ that does not intersect $\ax$ in its interior. If $hr_x \cap [a,b,c]$ and $ hr_x \cap \tau([a,b,c])$ are both empty, we can proceed as in the proof of \Cref{lem: sawtooth} to construct a ``tooth" $[y,z,w]$ (i.e. a triangle) of a sawtooth move such that $x \in [z,w]$ and $[y,z,w]$ does not intersect $[a,b,c]$ nor $\tau([a,b,c])$. 

Otherwise, we can suppose by \Cref{lem: reduceRtil} and \Cref{rmk: concatenate} that the diameter of the interior of $[a,b,c]$ is less than $d$. Then there exists at least a point $p \in \bx\setminus \{\infty\}$ such that $[x,p]$ does not intersect $[a,b,c]$ nor $\tau([a,b,c])$ (see \Cref{fig: avoidRtil}). Moreover we can pick $p$ so that $[x,p]$ does not intersect $\ax$ outside of $x$. 

\begin{figure}
    \centering
    \includegraphics[width=0.3\textwidth]{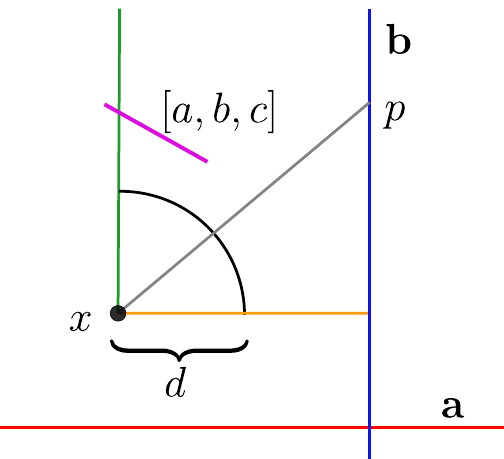}
    \caption{If the diameter of $[a,b,c]$ is less than $d$, there exists $p\in \bx\setminus\{\infty\}$ such that $[x,p] \cap [a,b,c]= \emptyset$.}
    \label{fig: avoidRtil}
\end{figure}

In this case, up to substituting $p$ with another point in a neighbourhood such that $[x,p]$ has the same properties as before and moreover $[x,p] \cap L= \{x\}$, we can construct the triangle $[y,z,w]$ of the sawtooth move in a neighbourhood of $[x,p]$, similarly to the proof of \Cref{lem: sawtooth}. We can now construct the triangles of an admissible sawtooth move on the remaining parts of the edge $e$ as in the previous case. We obtain points $p_0,\ldots,p_n \in e$, $q_1,\ldots,q_n \notin L$, such that $p_0p_n=e$, $ \Stil_{p_0,\ldots,p_n}^{q_1,\ldots,q_n}$ is admissible and for $i=0,\ldots n-1$ the triangle $ [p_i,p_{i+1},q_{i+1}]$ does not intersect $ [a,b,c]$ nor $\tau([a,b,c])$. Hence as before, $L \xrightarrow{\Stil_{p_0,\ldots,p_n}^{q_1,\ldots,q_n}} L_1 \xrightarrow{\Rtil_{a,c}^b} L_2 \xrightarrow{(\Stil_{p_0,\ldots,p_n}^{q_1,\ldots,q_n})^{-1}} L' $ and $h(L_1)=h(L_2) < h(L)$.

    \item In the case $\F=\Rcal_{a,c}^b$, by \Cref{lem: reduceR}, \Cref{rmk: concatenate} and the previous step of the proof we can suppose that the diameter of $[a,b,c, \tau(b)]$ is small enough so that it is disjoint from $e$. 
    
    By \Cref{lem: sawtooth}, we can find an admissible sawtooth move $\Stil_{p_0,\ldots,p_n}^{q_1,\ldots,q_n}$, such that $e=p_0p_n$ and such that $[a,b,c, \tau(b)] \cap [p_i,p_{i+1},q_{i+1}]$ is empty. Hence $L \xrightarrow{\Stil} L_1 \xrightarrow{\Ftil} L_2 \xrightarrow{\Stil^{-1}} L'$ and $h(L_1)$, $h(L_2) <h(L)$.

 \item If $\F=\W_{a,d}^{b,c}$ and this move does not go through $\infty$, by \Cref{prop: really empty}, \Cref{rmk: concatenate} and the previous steps of the proof, we can suppose that it is a really empty move. Thus, by \Cref{lem: sawtooth} we can find an admissible sawtooth move on $e$ that avoids the support of $\W_{a,d}^{b,c}$. Hence $L \xrightarrow{\Stil} L_1 \xrightarrow{\Ftil} L_2 \xrightarrow{\Stil^{-1}} L'$ and $h(L_1)$, $h(L_2) <h(L)$.

\item If $\F=\W_{a,d}^{b,c}$ that goes through $\infty$, we can suppose by \Cref{prop: really empty} and \Cref{rmk: concatenate} that it is a really empty move. Then
the sheaf $S$ of lines parallel to $\bx$ and intersecting $e$ does not intersect $[a,b,d] \cup \tau([a,b,d])$. Call $Q$ the quadrilateral with vertices $b,\tau(b),c,d$ and $\alpha$ the plane containing $Q$. Then also $S \cap (\alpha \setminus Q)$ is empty. Thus, as in the proof of \Cref{lem: sawtooth}, we can find an admissible sawtooth move on $e$ very close to $S$ so that every triangle of the move avoids $[a,b,d] \cup \tau([a,b,d]) $ and $\alpha \setminus Q $. Hence $L \xrightarrow{\Stil} L_1 \xrightarrow{\Ftil} L_2 \xrightarrow{\Stil^{-1}} L'$ and $h(L_1)$, $h(L_2) <h(L)$.

\item For $\Ftil=\Wtil_{a,d}^{b,c} $, one can suppose by \Cref{prop: really empty}, \Cref{rmk: concatenate} and the proof of the previous step, that it is a really empty move. Hence the projection of $e$ does not intersect the projection of this move. As a consequence, one can find an admissible sawtooth move $\Stil_{p_0,\ldots,p_n}^{q_1,\ldots,q_n}$ such that $p_0p_n=e$, and the triangles $[p_i,p_{i+1},q_{i+1}]$ are very close to being parallel to $\bx$. The construction is similar to the one in the proof of \Cref{lem: sawtooth}. Also in this case $L \xrightarrow{\Stil_{p_0,\ldots,p_n}^{q_1,\ldots,q_n}} L_1 \xrightarrow{\Wtil_{a,d}^{b,c}} L_2 \xrightarrow{(\Stil_{p_0,\ldots,p_n}^{q_1,\ldots,q_n})^{-1}} L'$ and $h(L_1),h(L_2)<h(L)$.

\item Let us deal now with the case $\F= \Etil^b_{a,c}$. Without loss of generality, $ac$ and $ab$ are negative while $bc$ is positive. We suppose for simplicity that $[a,c] \cap \ax$ is empty, the case where one of the endpoints lies in $\ax$ being analogous.
Suppose initially that $b$ and $c$ are close enough so that there are points $b_0,\ldots,b_n$ in $[a,b]$, such that $b_0=a$, $b_n=b$ and for $i=0,\ldots n-1$, $b_ib_{i+1}$ is negative,
$c_0,\ldots,c_n$ in $[a,c]$ such that $c_0=a$, $c_n=c$ and for $i=0,\ldots n-1$ $c_ic_{i+1}$ is negative, $d_1,\ldots,d_n \notin L$ with the property that $[b_i,b_{i+1},c_i,c_{i+1},d_{i+1}]\cap L=[c_i,c_{i+1}]$  and the edges $c_id_{i+1}$, $d_ic_i$, $b_id_{i+1}$, $d_ib_i$ are positive for $i=0,\ldots n-1$.

We also ask that, $$ \bigcup_{i=0}^{n-1}[b_i,b_{i+1},c_i,c_{i+1},d_{i+1}]\cap \bigcup_{i=0}^{n-1}[\tau(b_i),\tau(b_{i+1}),\tau(c_i),\tau(c_{i+1}),\tau(d_{i+1})]= \emptyset.$$ If $b$ is close enough to $c$ both conditions can be satisfied: in fact by \Cref{lem: sawtooth} there exists an admissible sawtooth move on $[a,c]$, that we call $\Stil_{c_0,\ldots,c_n}^{d_1,\ldots, d_n}$, and this implies that for $i, j=0,\ldots,n-1$ the set $[c_i,c_{i+1},d_{i+1}] \cap \tau([c_j,c_{j+1},d_{j+1}])$ is empty. Hence, if $b$ is close enough to $a$ we can have that all the conditions above are satisfied with this choice of $c_i$'s and $d_i$'s. 

In this case $$\Etil_{a,c}^b=(\Stil_{b_0,\ldots,b_n}^{d_1,\ldots,d_n})^{-1}\circ \mathcal{F}_n\circ \ldots \circ \mathcal{F}_{1} \circ \Stil_{c_0,\ldots,c_n}^{d_0,\ldots,d_n}$$
where the $\mathcal{F}_i$ are defined as:
\begin{itemize}
    \item when $c_{i}b_i$ is positive, for $0<i<n$ then $\mathcal{F}_i=(\Rtil_{d_i,b_{i}}^{c_i})^{-1}\circ \Rtil_{c_i,d_{i+1}}^{b_i}$ (\Cref{fig:Fm1});
    
    \item  when $c_{i}b_i$ is negative, for $0<i<n$, 
    $\mathcal{F}_i=(\Rtil_{b_i,d_{i+1}}^{c_i})^{-1}\circ \Rtil_{d_i,c_{i}}^{b_i}$ (\Cref{fig:Fm2});
    
\item when $i=n$, $\F_n=\Rtil_{d_n, c}^b$ (\Cref{fig:Fmn}).
 
\end{itemize}

\begin{figure}[H]
        \centering
        \includegraphics[width= 0.6 \textwidth]{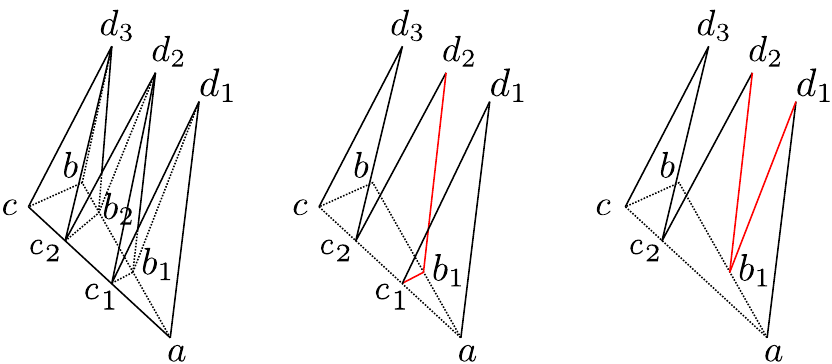}
        \caption{The move $\F_i$ when $c_{i}b_i$ is positive. To make the picture more clear, we just drew a schematic picture of the moves on $[a,c]$ and not on $[\tau(a),\tau(c)]$.}
        \label{fig:Fm1}
    \end{figure}

    \begin{figure}[H]
        \centering
        \includegraphics[width= 0.6 \textwidth]{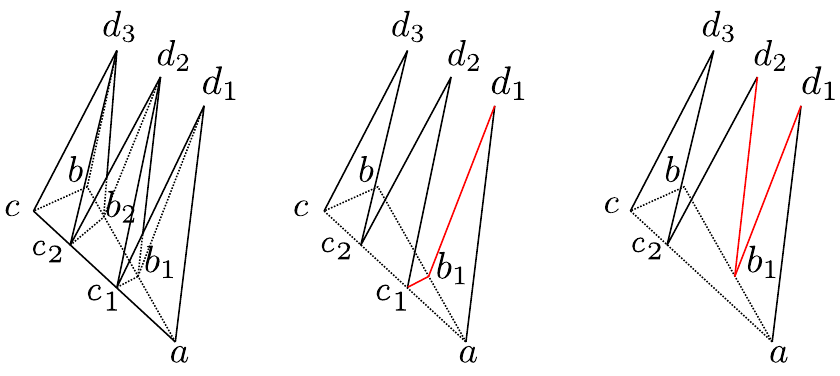}
        \caption{The move $\F_i$ when $c_{i}b_i$ is negative. As in \Cref{fig:Fm1}, to make the picture more clear, we just drew a schematic picture of the moves on $[a,c]$ and not on $[\tau(a),\tau(c)]$.}
        \label{fig:Fm2}
    \end{figure}

    \begin{figure}[H]
        \centering
        \includegraphics[width= 0.4 \textwidth]{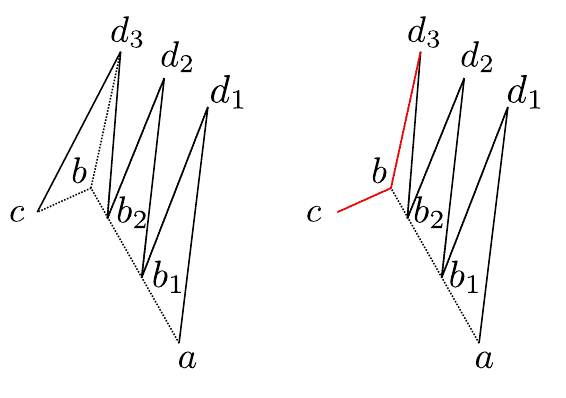}
        \caption{The move $\F_n$. As in \Cref{fig:Fm1} and \Cref{fig:Fm2}, to make the picture more clear, we just drew a schematic picture of the moves on $[a,c]$ and not on $[\tau(a),\tau(c)]$}.
        \label{fig:Fmn}
    \end{figure}

Notice that in this case 
$$ 
\begin{tikzcd}
L \arrow[r,"\Stil_{a_0,\ldots,a_n}^{d_0,\ldots,d_n}"] & 
L_1 \arrow[rr, bend left, dashed, "\F_{1}"] \arrow[r] & L_2 \arrow[r] & L_3 \arrow[r] & \cdots \arrow[r]&  L_{2n-1}  \arrow[r, " \F_n"]  & L_{2n} \arrow[r,"(\Stil_{b_0,\ldots,b_n}^{d_1,\ldots,d_n})^{-1}"] & L' \\
& 
\end{tikzcd} $$
and $h(L_i) < h(L)$ for $i=1,\ldots ,2n$.

Now, in the case $b$ and $c$ are not close enough, we can find a set of points $x_0,\ldots,x_k\in [b,c]$, so that $x_0=b$, $x_k=c$, $x_ix_{i+1}$ is positive and $x_i$ and $x_{i+1}$ are close enough in the above sense for $i=0,\ldots,k-1$. This can be done since $[b,c]$ is compact. 

Now notice that $\Etil_{a,c}^b= \Etil_{a,x_{k-1}}^{x_k} \circ \ldots \circ \Etil_{a,x_0}^{x_1}$, that $ax_i$ is negative for every $i$, that $x_ix_{i+1}$ is positive for every $i$ and that on every move on the right-hand side we can apply the procedure above. This, together with \Cref{rmk: concatenate}, lets us conclude the proof of this case.

\item We are left with the case of a $\D_{a,c}^b$, where $ac$ and $ab$ are negative. If $\D_{a,c}^b$ goes through $\infty$, then let $d \in \ax \setminus \{\infty\}$ and $d \notin [b,c]$ such that $ad$ is positive. Then 
$ L \xrightarrow{\D_{a,c}^d} L_1 \xrightarrow{\D_{a,d}^b} L'$ and $h(L_1) < h(L).$

If $\D_{a,c}^b $ does not go through $\infty$, we treat this case similarly to the case of a $\Etil$ move. More precisely: suppose that $b$ and $c$ are close enough so that there are points $b_0,\ldots,b_n \in [a,b]$ such that $b_0=a$, $b_n=b$ and, for $i=1,\ldots,n-1$, $b_ib_{i+1}$ is negative, $c_0,\ldots,c_n \in [a,c]$ such that $c_0=a$, $c_n=c$ and $c_ic_{i+1}$ is negative for $i=1,\ldots,n-1$, $d_0,\ldots, d_n\notin L$ with the property that $[b_i,b_{i+1},c_i,c_{i+1},d_{i+1}] \cap L =[c_i,c_{i+1}]$ and $ c_id_{i+1},d_ic_i,b_id_{i+1},d_ib_i$ are positive. We also ask that 
$$ \bigcup_{i=0}^{n-1}[b_i,b_{i+1},c_i,c_{i+1},d_{i+1}]\cap \bigcup_{i=0}^{n-1}[\tau(b_i),\tau(b_{i+1}),\tau(c_i),\tau(c_{i+1}),\tau(d_{i+1})]= [b,c] \subset \ax.$$
As before, if $b$ is close enough to $c$ then all the above conditions are satisfied. In this case 
$$ \D_{a,c}^b= (\Stil_{b_0,\ldots,b_n}^{d_1,\ldots,d_n})^{-1}\circ \F_{n}\circ\ldots \circ \F_{1} \circ \Stil_{c_0,\ldots,c_n}^{d_1,\ldots,d_n},$$
where the $\F_i$ are defined as in the preovious case for $0<i<n$ and $\F_n= \D_{d_n, c}^b $.

As above, find $x_0, \ldots, x_k \in [b,c]$, so that $x_0=b$, $x_k=c$ and $x_i$, $x_{i+1}$ are close enough in the above sense for $i=0,\ldots,k-1$. Notice that $\D_{a,c}^b= \D_{a,x_{k-1}}^{x_k} \circ \ldots\circ \D_{a,x_0}^{x_1}.$ Thus we can apply the above procedure to each move on the right-hand side, and we conclude the proof of this last case by \Cref{rmk: concatenate}.
\end{itemize}
\vspace{-1cm}
\end{proof}

\subsubsection{Proof of part (ii)}

We now proceed with the proof of Part (ii). Recall that we have to prove that if $ L_{1}\to L_2 \to L_{3}$ and $h(L_{2})>h(L_{1}),h(L_{3}$, then we can modify we find $L_1',\ldots, L_n'$ with $h(L'_j)< h(L_2)$ such that $ L_{1}\to L_1' \to \cdots\to L_n' \to L_{3}$. This will be done in \Cref{prop: reducemin}.

For the proof of \Cref{prop: reducemin} we will find useful the following:

\begin{lem} \label{lem: sawteeth}
    Let $\Stil_{a_0,\ldots,a_n}^{b_1,\ldots,b_n}$ be an admissible sawtooth move and let $e= a_0a_n$. Then if $e' \neq e, \tau(e)$ there always exists $\Stil_{c_0,\ldots,c_m}^{d_1,\ldots,d_m}$, with $e'=c_0c_m$ such that for $i=0,\ldots, n-1$ and $j=0,\ldots,m-1$ the intersections $[c_j,c_{j+1},d_{j+1}]\cap [a_i,a_{i+1},b_{i+1}]$ and $[c_j,c_{j+1},d_{j+1}]\cap  \tau([a_i,a_{i+1},b_{i+1}])$ are either empty or equal to $ e \cap (e' \cup \tau(e))$ (this can happen when $e$ and $e'$ or $e $ and $\tau(e')$ are consecutive edges on the link).
\end{lem}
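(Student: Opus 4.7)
The plan is to adapt the proof of Lemma~\ref{lem: sawtooth}, treating
\[\Omega \coloneqq \bigcup_{i=0}^{n-1}\bigl([a_i,a_{i+1},b_{i+1}]\cup \tau([a_i,a_{i+1},b_{i+1}])\bigr)\]
as a compact $\tau$-invariant obstacle that the new sawtooth on $e'$ must miss, outside the permitted exceptional points. Admissibility of the given sawtooth forces $[a_i,a_{i+1},b_{i+1}]\cap L=[a_i,a_{i+1}]$ for each $i$, hence $\Omega \cap L = e \cup \tau(e)$. Since $e'\neq e,\tau(e)$, the intersection $e' \cap \Omega$ is contained in $(e \cap e') \cup (\tau(e) \cap e')$, i.e.\ at most the two endpoints of $e'$ shared with $e$ or with $\tau(e)$, which matches the exceptional intersection allowed by the statement.

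The difficulty is that $\Omega$ is not convex, so Lemma~\ref{lem: sawtooth} cannot be applied directly. I would revisit its three-case proof and replace each use of convexity of the obstacle by a compactness/genericity argument. In Case~1 (no projection crossings on $e'$), convexity was used to pick one of the two half-strips of $S=\bigcup_{x\in e'}r_x$ disjoint from $\Omega$. In the present setting $\Omega\cap S$ is a finite union of convex pieces, one per triangle $[a_i,a_{i+1},b_{i+1}]$ and per $\tau$-image; those pieces not touching $e'$ lie entirely in a single half-strip. If the union of all offending pieces lies in a single half-strip we proceed as in the original proof; otherwise, a small generic rotation of $S$ around $e'$ produces a rotated sheaf $\widetilde S$ meeting $\Omega$ only in $e'\cap\Omega$, since the pieces are finitely many compact sets and for generic rotations the rotated line through a fixed point of $e'$ avoids them. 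Cases~2 and~3 (crossings of $e'$ with another edge and with $\tau(e')$, respectively) are handled by the same subdivision of $e'$ as in Lemma~\ref{lem: sawtooth}: at each crossing one builds a small tooth inside the plane through the crossing point and $\bx$, and by compactness of $\Omega$ the choice of apex and rotation angle can be made so that both the tooth and its $\tau$-image miss $\Omega$.

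The main obstacle I expect is Case~3, where the plane through the crossing point and $\bx$ also contains $\ax$ and the available rotations are constrained. I would follow the strategy of the original proof, placing the critical tooth in a small neighborhood of the convex region $U_y$ adjacent to $\ax$ and using that $\Omega\cap U_y$ is controlled by $\Omega\cap \ax$, which is finite; combined with the splitting of $e'$ into the two sub-segments on either side of the crossing point (each handled by Cases~1 and~2), this produces the desired admissible sawtooth move $\Stil_{c_0,\ldots,c_m}^{d_1,\ldots,d_m}$ on $e'$.
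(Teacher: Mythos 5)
Your proposal has a genuine gap, and it lies precisely where the paper must introduce an additional technical lemma (\Cref{lem: auxiliary}). The core difficulty is that the obstacle set $\Omega$ is not only non-convex, it \emph{intersects the braid axis $\bx$}: every triangle $[a_i,a_{i+1},b_{i+1}]$ and every $\tau([a_i,a_{i+1},b_{i+1}])$ meets $\bx$, because that is exactly what a sawtooth tooth does. At the same time, each tooth $[c_j,c_{j+1},d_{j+1}]$ of the new sawtooth on $e'$ must also reach $\bx$. So the new teeth are forced to travel from $e'$ all the way to the axis while dodging a finite family of triangles that themselves span all the way to the axis. Your Case~1 fix --- ``a small generic rotation of $S$ around $e'$ produces a rotated sheaf $\widetilde{S}$ meeting $\Omega$ only in $e' \cap \Omega$'' --- does not work here: you cannot rotate a sheaf of segments that must all terminate on $\bx$ away from compact pieces of $\Omega$ that also cross $\bx$, since the requirement of hitting $\bx$ kills the genericity. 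The same issue infects your treatment of Cases~2 and~3: ``by compactness of $\Omega$ the choice of apex and rotation angle can be made so that both the tooth and its $\tau$-image miss $\Omega$'' is asserted, not proved, and is exactly the nontrivial point.

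The paper addresses this with \Cref{lem: auxiliary}, a separate planar combinatorial argument: for each $x\in e'$ it works inside the plane $\alpha_x$ through $x$ and $\bx$, where the triangles $T_i$ and $\tau(T_i)$ intersect in a finite family of segments crossing the line $\bx$. It then runs a ``jumping'' procedure (go toward $\bx$; if blocked by a triangle $T_{i_1}$, aim at its intersection point $t_{i_1}$ with $\bx$; if blocked again by $T_{i_2}$, aim at $t_{i_2}$; etc.) and proves the process terminates by showing the intersection points $t_{i_j}$ encountered are strictly ordered along $\bx$ (using the fact that the triangles have disjoint interiors). That monotonicity argument is the missing ingredient in your proposal; without something like it you have no reason to believe a valid segment $[x,e_x]$ exists for every $x\in e'$. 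Once that lemma is in hand, the paper does \emph{not} redo the three cases of \Cref{lem: sawtooth}: it uses compactness of $e'$ to glue the local segments $\gamma_x$ into a sawtooth, with a separate case analysis on the finite set of points $x$ for which $\tau(x)$ also lies in $\alpha_x$. So besides the gap, the structure of your proof (re-running the three cases of \Cref{lem: sawtooth}) diverges from what actually closes the argument.
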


In order to prove \Cref{lem: sawteeth}, we will need the following technical lemma:

\begin{lem} \label{lem: auxiliary}
    Let $x \in \R^3 \setminus \bx$ and let $T_1,\ldots, T_n$ be a set of triangles $T_i=[a_i,b_i,c_i]$, such that for every $i \neq j$ the set $(T_i \setminus \{a_i,b_i,c_i\}) \cap T_j $ is empty. Suppose as well that for $i=1,\ldots, n$ the set $T_i \cap \bx$ is equal to a point $t_i$ contained in the interior of $T_i$, that $\operatorname{Int} (T_i) \cap \ax = \emptyset$ and that $x \notin \operatorname{Int}(T_i)$. Then one can find $e_x \in \bx \setminus \{\infty\}$ such that $[x,e_x] \cap T_i\subset \{x\}$ for for $i=1,\ldots,n$. 
\end{lem}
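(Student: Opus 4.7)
The plan is to reduce the statement to a two-dimensional problem in the plane $\Pi\subset\R^3$ through $x$ containing $\bx$, which is well-defined since $x\notin\bx$. Any segment $[x,e]$ with $e\in\bx$ lies entirely in $\Pi$, so $[x,e]$ meets $T_i$ if and only if it meets the intersection $s_i:=T_i\cap\Pi$. I would first observe that $T_i\not\subset\Pi$ (otherwise $T_i\cap\bx$ would be a whole segment through the interior point $t_i$, not the singleton $\{t_i\}$), so that $s_i$ is an honest line segment of $\Pi$; since $t_i$ lies in the interior of $T_i$ it will lie in the interior of $s_i$, and the endpoints of $s_i$ will lie on opposite sides of $\bx$ in $\Pi$. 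The disjointness hypothesis $(T_i\setminus\{a_i,b_i,c_i\})\cap T_j=\emptyset$ descends to the property that the segments $s_i$ are pairwise non-crossing, meeting (if at all) only at endpoints which are vertices of both $T_i$ and $T_j$.

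For each $i$ I will set $B_i=\{e\in\bx\setminus\{\infty\}\mid [x,e]\cap s_i\not\subseteq\{x\}\}$. A direct computation, comparing on each ray from $x$ the distance to $s_i$ with the distance to $\bx$, should show that $B_i$ is a closed interval of $\bx$ having $t_i$ as one endpoint; the other endpoint will be the shadow $\tilde p_i$ on $\bx$ of the endpoint $p_i$ of $s_i$ lying on the same side of $\bx$ as $x$, interpreted as $\pm\infty$ when the line through $x$ and $p_i$ is parallel to $\bx$. Finding $e_x$ is thus equivalent to exhibiting a point of $\bx\setminus\{\infty\}$ outside all of the $B_i$'s.

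The main obstacle will be to prove that $\bigcup_i B_i$ does not cover all of $\bx\setminus\{\infty\}$. My plan is to argue by contradiction: if the union did cover, then necessarily some $B_{i_1}$ would extend unboundedly to one end of $\bx$ and some $B_{i_2}$ to the other, which geometrically means that $p_{i_1}$ and $p_{i_2}$ both lie on the line through $x$ parallel to $\bx$, on opposite sides of $x$. Taking $i_1,i_2$ extremally, with $t_{i_1}$ the largest $t$-value among the downward-extending $B$'s and $t_{i_2}$ the smallest among the upward-extending ones, the coverage condition will force $t_{i_1}\geq t_{i_2}$. Restricting $s_{i_1}$ and $s_{i_2}$ to the strip in $\Pi$ bounded by $\bx$ and the parallel line through $x$, one checks that the two segments have opposite height orderings at the two sides of this strip, so they must cross transversally in the interior of both, contradicting the non-crossing property. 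The remaining case $t_{i_1}<t_{i_2}$, in which a gap in the coverage must be filled by further bounded $B_k$'s, I expect to handle by iteratively applying the same planar crossing argument to smaller sub-intervals, eventually producing a pair of segments forced to cross.
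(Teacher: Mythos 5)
Your proposal takes a genuinely different route from the paper. The paper's proof is constructive and iterative: starting from the half-line from $x$ parallel to $\bx$, find the first triangle $T_{i_1}$ hit, then look at $[x,t_{i_1}]$, find the first triangle $T_{i_2}$ hit, and so on; the key observation is that the points $t_{i_1},t_{i_2},\ldots$ are monotonically ordered on $\bx$, which forces the process to terminate with a valid $e_x$. You instead set up a covering argument: define the ``bad'' set $B_i\subset\bx$ of endpoints $e$ for which $[x,e]$ meets $s_i=T_i\cap\Pi$ nontrivially, argue each $B_i$ is a closed interval with $t_i$ as an endpoint, and try to show the finitely many $B_i$'s cannot cover $\bx\setminus\{\infty\}$. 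The setup is sound: $T_i\not\subset\Pi$, so $s_i$ is a genuine segment transverse to $\bx$ at $t_i$, and the disjointness hypothesis on the $T_i$ does descend to non-crossing of the $s_i$'s (away from shared vertices). The ``main case'' of your contradiction, where two unbounded shadows overlap, is essentially correct and is in fact the same IVT-crossing argument the paper uses to prove its monotone ordering.

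However, there is a genuine gap. You acknowledge it yourself in the last sentence: the case $t_{i_1}<t_{i_2}$, where the gap in coverage must be filled by bounded $B_k$'s, is left as ``I expect to handle by iteratively applying the same planar crossing argument to smaller sub-intervals, eventually producing a pair of segments forced to cross.'' This is precisely the hard part, and it is not at all clear that the iteration closes up: the bounded $B_k$'s have $t_k$ as one endpoint, but $t_k$ may sit on either side of its interval, and the opposite endpoints $\tilde p_k$ are governed by the positions of the $p_k$'s, which are scattered across the strip rather than all lying on the vertical line through $x$. So the crossing argument that works for the two extremal unbounded intervals does not propagate to this situation in any obvious way; making it work would essentially require reconstructing the paper's ordered $t_{i_j}$ argument inside the covering framework. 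There is also a smaller inaccuracy: $B_i$ is a half-line not only ``when the line through $x$ and $p_i$ is parallel to $\bx$,'' but whenever $p_i$ lies at least as far from $\bx$ as $x$ does; in that case the relevant cut point is $s_i\cap(\text{the line through $x$ parallel to $\bx$})$, not $p_i$ itself. This matters for your identification of ``which'' points lie on the vertical through $x$. In short: sound framework and a correct main case, but the actual non-covering statement is not proved.
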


\begin{proof}
    Let $\alpha_x$ be the plane containing $\bx$ and $x$. One has that $\ax$ is either contained in $\alpha_x$ or $\ax \cap \alpha_x = \{0\}$.

    Consider the intersection of the triangles $T_i$ with the plane $\alpha_x$. 
    \begin{figure}
        \centering
        \includegraphics[width=0.3 \textwidth]{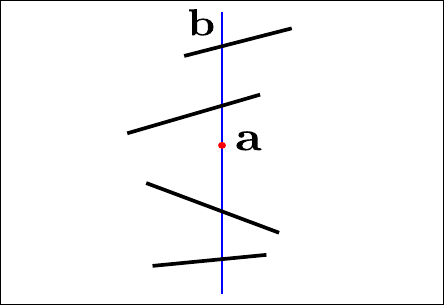}
        \caption{The plane $\alpha_x$ in the proof of \Cref{lem: auxiliary}. The blue segments represent the intersections of the triangles $T_i$ with $\alpha_x$. }
        \label{fig:triangolini}
    \end{figure}
    These must be segments intersecting the line $\bx$ (see \Cref{fig:triangolini}).

    
    Notice that if $x$ lies on the same plane as one of the $T_i$'s, and the segment $[x,t_i]$ does not intersect any other $T_j$, then we can find the desired segment between $x$ and $\bx$ just perturbing slightly $[x,t_i]$, since this will be disjoint from $T_i$ and and still disjoint from all other $T_j$ (see \Cref{fig:allineato}). 
    \begin{figure}
        \centering
        \includegraphics[width=0.3 \textwidth]{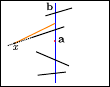}
        \caption{If the point $x$ lies in the same line as one of the segments $T_i \cap \alpha_x$ then the orange line in the picture connects $x$ to $\bx$ without intersecting the $T_j$'s.}
        \label{fig:allineato}
    \end{figure}
    We will therefore suppose that $[x,t_i] $ is transverse to every $T_j$.
    
     Consider a half-line $hr_x$ parallel to $\bx$ with endpoint $x$. 
     If $\ax \subset \alpha_x$ consider the half-line avoiding $\ax$.
     
     If this half-line does not intersect any $T_i$, then by slightly rotating the half-line inside $\alpha_x$ we find a segment connecting $x$ and $\bx$, disjoint from the triangles. 
     
     In the other case, when the half-line intersects at least one triangle, let $T_{i_1}$ be the first triangle one meets travelling from $x$ along the half-line, and let $s_{1} \in T_{i_1} $ be the intersection point (notice that $T_{i_1}$ might not be unique, in that case we pick one of the possible triangles).

    Now suppose we have defined $i_{j-1}$ and $s_{j-1}$. Consider the segment $[x,t_{i_{j-1}}]$. 
    If this does not intersect any other triangle than $T_{i_{j-1}}$, then very close to $t_{i_{j-1}}$ on $\bx$ one can find a point $e_x$ so that $[x,c_x]$ is disjoint from every $T_k$, from $\ax$, and from $L$, as desired. 
    In the other case, if $[x,t_{i_{j-1}}]$ actually intersects some triangles,
let $T_{i_{j}}$ be the first triangle (or one of them if the choice is not unique) one meets travelling from $x$ along $[x,t_{i_{j-1}}]$, and let $s_j \in T_{i_{j}}$ be the intersection point (see \Cref{fig:iterazione}). 
\begin{figure}
         \centering
    \includegraphics[width=0.3 \textwidth]{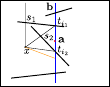}
         \caption{The procedure to define the $t_{i_j}$ and the $s_j$ in the proof of \Cref{lem: auxiliary}.}
         \label{fig:iterazione}
     \end{figure}

We want to say that there exists $j$ so that $[x,t_{i_j}]$ does not intersect any other triangle than $T_{i_{j}}$. Since the $T_i$'s are finite, if that is not the case then the only option is that the sequence of $i_j$ is definitively periodic.

We exclude this case by noticing that the $t_{i_j}$ are ordered on $\bx\setminus \{\infty\}$. 

In fact, without loss of generality one can suppose that for $i>2$, $t_{i_{j-1}}$ and $t_{i_j}$ lie on the $\bx$ axis as in \Cref{fig:casidaevitare}.
\begin{figure}
    \centering
    \includegraphics[width=0.3 \textwidth]{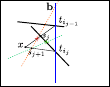}
    \caption{The $t_{i_j}$ must be ordered on the $\bx$ axis. If $t_{i_{j+1}}$ lies on the same side as $t_{i_{j-1}}$ with respect to $t_{i_j}$ then $[s_{j+1}, t_{i_{j+1}}] \subset T_{i_{j+1}}$ would intersect in its interior either $[s_j,t_{i_j}]\subset T_{i_{j}}$ (in the red cross) or $[x,s_j]$ (in the green cross).}
    \label{fig:casidaevitare}
\end{figure}

The point $t_{i_j}$ divides $\bx\setminus\{\infty\}$ in two half-lines, let $h$ be the one containing $t_{i_{j-1}}.$
Now, $t_{i_{j+1}}$ cannot belong to $h$, otherwise $[s_{j+1},t_{i_{j+1}}]$, which is contained in $T_{i_{j+1}}$, would intersect either $[s_{j},t_{i_j}]$,
or $[x,s_{j}]$.
The first case is impossible since $[s_{j},t_{i_j}]$ is contained in $T_{i_j}$, but $T_{i_j}$ and $T_{i_{j+1}}$ cannot intersect in their interior.
The second case is also impossible since $s_j$ would not be the first point of intersection of $[x,t_{i_j}]$ with one of the triangles $T_k$ (see \Cref{fig:casidaevitare}).
   
    Hence the $t_{i_j}$ are ordered on $\bx$ and, as a consequence there exists a $t_{i_j} $ so that $[x,t_{i_j}]$ does not intersect any of the $T_k$ for $k \neq i_j$. We can then conclude that near $[x,t_{i_j}]$ we can find a segment $\gamma_x=[x,e_x]$, disjoint from all the $T_k$'s, and $\ax$.
\end{proof}

We are finally ready to prove \Cref{lem: sawteeth}.

\begin{proof}[Proof of \Cref{lem: sawteeth}]
Notice that the triangles $T_i=[a_i,a_{i+1},b_{i+1}]$ and $T_{i+n}=\tau(T_i)$ satisfy the hypotheses of \Cref{lem: auxiliary}.
    For every $x \in e'$, let $e_x$ be the point given by \Cref{lem: auxiliary} so that the segment $\gamma_x=[x,e_x]$ does not intersect in its interior any of the triangles $T_i$ for $i=0,\ldots, 2n-1 $.
    

    
    Notice that $\alpha_x \cap L$ is a finite set of points: hence a generic segment connecting $x$ and $\bx$ avoids these points. Up to slightly perturbing $e_x$ inside $\bx$ we can suppose that $[x,e_x]\cap L = \{x\}$.
Suppose first that for every $x \in e'$, $\alpha_x \cap \tau(e')$ is empty.

Hence, for every $x, y \in e'$ the segments $\gamma_x$ and $ \tau(\gamma_y)$ are either disjoint or $\gamma_x\cap \tau(\gamma_y) = \{e_x\}$. In fact they lie in different planes $\alpha_x$ and $\alpha_{\tau(y)}$ that intersect in $\bx\setminus \{\infty\}$.

Hence, for every $x \in e'$, we can find $p_x,r_x \in e'$ close to $x$, such that $ I_x=[p_x,r_x] $ is a neighbourhood of $x$, $ p_xr_x$ is negative, and an open set $U_x$ close to $e_x$ such that for every $ q \in U_x$ , the segments $p_zq, qr_z$ are positive and the triangle $[p_x,q,r_x]$ does not intersect the $L \setminus [p_x,r_x]$, the axis $\ax$ and all the triangles $T_i\setminus [p_x,r_x]$.
    By compactness, there exist $x_1,\ldots,x_m \in e'$, such that $x_ix_{i+1}$ is negative and $\bigcup_{i=1}^m I_{x_i}=e'$. 
    Up to slightly moving some of the $e_{x_i}$'s, we can suppose as well that $ \gamma_{x_i}$ and $\tau(\gamma_{x_j})$ are disjoint for every $i\neq j$.
    
    Let $c_0$ and $c_m$ be the extremes of $e'$ and for $i=1,\ldots m-1$ let $c_i \in I_{x_i}\cap I_{x_{i+1}}$ such that $ c_ic_{i+1}$ is negative. Pick $ d_i \in U_{x_i}$. Notice that for a generic choice of $d_i$ close to $e_{x_i}$, $[c_i,c_{i+1},d_{i+1}] \cap \tau([c_j,c_{j+1},d_{j+1}])\subset e' \cap \tau(e') $ for every $i,j$. Hence $\Stil_{c_0,\ldots,c_m}^{d_1,\ldots,d_m}$ is a well-defined and admissible sawtooth move on $L$ that has all the required properties.
    \vspace{1mm}
    
    Now suppose that the set $X$ of points $x\in e'$ such that $ x, \tau(x) \in \alpha_x$ is non-empty. Notice that this can happen if and only if $\alpha_x$ is orthogonal to $\ax$ or if $ \ax \subset \alpha_x$.
    
    Hence $X=\{t_1,\ldots, t_h\}$ is a finite set. Let $\gamma_{t_1}=[t,e_{t_1}]$ be the segment given by \Cref{lem: auxiliary} that avoids all the $T_i$'s. Notice that $\gamma_{t_1} \cap \tau(\gamma_{t_1}) \subset \{x\}$. 

    Let $p_{t_1},q_{t_1},r_{t_1}$ be very close to $\gamma_{t_1}$ so that $ [p_{t_1},r_{t_1}]$ is a neighbourhood of ${t_1}$ in $e'$, $p_{t_1}r_{t_1}$ is negative and $p_{t_1}q_{t_1}$, $p_{t_1}r_{t_1}$ are positive and $[p_{t_1},q_{t_1},r_{t_1}] \setminus [p_{t_1},r_{t_1}]$ does not intersect $L $, $\ax$, $[a_i,a_{i+1},b_{i+1}]$ and $ \tau([a_i,a_{i+1},b_{i+1}])$. 

    Notice that the set of triangles $\bigcup_{i=0}^{2n-1}\{T_i\} \cup \{[p_{t_1},q_{t_1},r_{t_1}], \tau([p_{t_1},q_{t_1},r_{t_1}]) \} $ still satisfies the hypothesis of \Cref{lem: auxiliary}. 
    
    Hence we can apply this procedure iteratively on every $t_j \in X$, and at every step we find a triangle $ [p_{t_j},q_{t_j},r_{t_j}]$ that avoids all the triangles $T_i$ and also $ [p_{t_l},q_{t_l},r_{t_l}]$ and $\tau([p_{t_l},q_{t_l},r_{t_l}])$ for every $l < j$. 
    Now notice that the closure of $e' \setminus \bigcup_{j=1}^h[p_{t_j},r_{t_j}]$ is a union of disjoint segments $I_1,\ldots, I_k$. 
    Notice that for every $x \in I_i$, then $\alpha_x \cap \tau(I_i)= \emptyset$. 
    
    In fact, if $\tau(y) \in \alpha_x$, and $y \neq x$ then we can see that there must be $z \in [x,y]$ such that $\tau(z) \in \alpha_z$, which is not possible. 

    To see that, let $n_x$ be a unit vector orthogonal to $\alpha_x$ such that $ \langle y, n_x\rangle $ is positive, and let $n_y = \tau(n_x)$, which is orthogonal to $\alpha_y$ since $\tau$ is an isometry of $\R^3$ with the standard scalar product. Suppose that $ \langle x, n_y\rangle$ is negative, the other case being analogous. 
    Let $z= (1-t)x +t y$, with $ t= \frac{\langle x, n_y \rangle }{\langle x, n_y \rangle- \langle y, n_x \rangle}.$
    Let $v_z\coloneqq n_x+n_y$. We show that $v_z^\perp =\alpha_z$. Of course $v_z$ is orthogonal to $\bx$, hence we want to see that $ v_z$ is orthogonal to $z$ as well. One can easily compute 
    $ \langle z,  v_z\rangle = (1-t)\langle x, n_y \rangle + t \langle y, n_x \rangle =0$.
    Moreover $$ \langle \tau(z) , v_z \rangle = (1-t)\langle \tau(x), n_x \rangle + t \langle \tau(y), n_y \rangle = (1-t)\langle x, n_y \rangle + t \langle y, n_x \rangle = \langle z , v_z\rangle=0, $$
    where the second equality follows from the fact that $\tau$ is an isometry. Hence $\tau (z) \in v_z^\perp = \alpha_z$.

    Then we apply iteratively the procedure of the first part of the proof to $I_i$ for $i=1,\ldots,k$, making sure to add the triangles that we find at every step to the set of the triangles to avoid, 
    i.e. we find $p^{i}_{0}, \ldots ,p^{i}_{k_{i}} \in I_{i}$ and 
    $q^{i}_{1},\ldots,q^{i}_{k_i}$, such that $p^{i}_{0} p^{i}_{k_i}=I_i$, 
    $ p^i_j p^i_{j+1}$ is negative, $ p^i_j q^i_{j+1}$ and $q^i_{j+1} p^i_{j+1}$ are positive. 
    Moreover $[p^i_j, p^i_{j+1},q^i_{j+1}] $ avoids 
    $[p_{t_j},q_{t_j},r_{t_j}]$ for every $t_j \in X$ and 
    $ [p^h_l, p^h_{l+1},q^h_{l+1}]$ for every $ j,l$ and for every $h<i$, and when $h=i$ and $j \neq l$.

    Hence these triangles define an admissible sawtooth move on $e'$, concluding the proof.
\end{proof}

We are now ready to prove the main goal of part (ii) of this third step.
\begin{prop}\label{prop: reducemin}
If two links $L$ and $L'$ are joined by a chain of deformation moves
$$L \to L'' \to L' $$
such that $h(L'')>h(L)$ and $h(L'')>h(L')$, then there is a chain of links
$$L=L_0 \to L_1\to \cdots \to L_n=L' $$
such that $h(L_i) < h(L'')$ for each $i$.
\end{prop}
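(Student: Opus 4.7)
The move $\mathcal{F}_1\colon L\to L''$ strictly increases $h$ while $\mathcal{F}_2\colon L''\to L'$ strictly decreases it. Among the operations at our disposal, only $\Stil$, $\Etil$, $\D$ and their inverses can change $h$, so both $\mathcal{F}_1$ and $\mathcal{F}_2$ belong to one of these types. The overall strategy will be to factorize each of these two moves into smaller pieces with localized support, and then to commute the $h$-increasing pieces past the $h$-decreasing pieces so that the intermediate links stay strictly below $h(L'')$.

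The first step is to reduce to the case where the supports of $\mathcal{F}_1$ and $\mathcal{F}_2$ are disjoint. I would use \Cref{lem: reduceRtil} and \Cref{lem: reduceR} to shrink any $\Rtil$ or $\Rcal$ factors that appear in a factorization, and \Cref{prop: empty} together with \Cref{prop: really empty} to assume that any $\W$ or $\Wtil$ factors are really empty; these preliminary reductions do not change $h$ along the way. For the $h$-changing factors themselves — the $\Stil$, $\Etil$ and $\D$ pieces — I would mimic the argument in the proof of \Cref{prop: isolatemax}: any negative edge $e$ admits an admissible sawtooth (\Cref{lem: sawtooth}) whose triangles are squeezed into an arbitrarily small neighbourhood of the sheaf of lines parallel to $\bx$ meeting $e$, thereby avoiding any prescribed compact region in $\R^3$, and in particular the support of the other move.

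Once the supports have been made disjoint, the two families of moves commute: applying a decomposed piece of $\mathcal{F}_2$ directly to $L$ produces some $L^*$ with $h(L^*)\le h(L)<h(L'')$, and then applying the corresponding piece of $\mathcal{F}_1$ to $L^*$ recovers the analogous stage of the original chain. Iterating this rearrangement over the whole factorization gives a chain $L=L_0\to L_1\to\cdots\to L_n=L'$ in which every intermediate link satisfies $h(L_i)<h(L'')$. The concatenation of the rearranged sequences is legitimate thanks to \Cref{prop: sawtooth}, which guarantees that any two admissible sawtooth moves on the same edge differ by admissible $\Rtil$ and $\Wtil$ moves, all of which preserve $h$; this is precisely the mechanism described in \Cref{rmk: concatenate}.

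The principal obstacle is the interaction case, when a negative edge of $L''$ created by $\mathcal{F}_1$ coincides with one of the edges destroyed by $\mathcal{F}_2$. Here the two supports genuinely overlap and cannot be made disjoint by the preceding reductions. I expect to handle this by a direct inspection of the finitely many pairings of move types $\Stil^{\pm 1}$, $\Etil^{\pm 1}$, $\D^{\pm 1}$: typically the shared piece will collapse to an identity or to an $\Rtil$/$\Wtil$ composition via \Cref{prop: sawtooth}, while the residual pieces of $\mathcal{F}_1$ and $\mathcal{F}_2$ will have disjoint supports and fall back into the previous case. The hardest subcases will be those in which $\mathcal{F}_1$ and $\mathcal{F}_2$ are inverses of each other on a common region but genuinely different elsewhere, where one must carefully track the signs of the edges introduced at the intermediate stages to verify that every rearranged link sits strictly below $h(L'')$ in the complexity ordering.
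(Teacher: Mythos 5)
Your proposal identifies the right skeleton — $\mathcal{F}_1$ must strictly increase $h$ (so it is one of $\Stil^{-1}$, $\D$, $\Etil$) and $\mathcal{F}_2$ must strictly decrease it — and the disjoint-support case is indeed handled by a commutation argument using \Cref{lem: sawtooth}/\Cref{lem: sawteeth} together with \Cref{prop: sawtooth} and \Cref{rmk: concatenate}. However, you leave the actual technical crux of the proposition as a statement of intent rather than an argument, and along the way there is a misapplication: \Cref{lem: reduceRtil}, \Cref{lem: reduceR}, \Cref{prop: empty}, \Cref{prop: really empty} shrink or empty out moves of type $\Rtil$, $\Rcal$, $\W$, $\Wtil$, all of which \emph{preserve} $h$; they are not factorization tools for the $h$-changing moves $\Stil$, $\Etil$, $\D$ that actually constitute $\mathcal{F}_1$ and $\mathcal{F}_2$. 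The reduction you propose in your second paragraph therefore does not apply to the peak moves themselves.

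The genuine gap is the interaction case you flag in your last paragraph. You write that ``typically the shared piece will collapse to an identity or to an $\Rtil$/$\Wtil$ composition,'' but this is exactly what needs to be proved and is not generically true: an $\Etil_{a,c}^{b}$ that creates a negative edge $bc$, followed by a sawtooth $\Stil$ that removes that same edge $bc$, does \emph{not} reduce to an identity or a trivial composition. The paper resolves this by first normalizing so the second move is a sawtooth (any $\Etil^{-1}$ or $\D^{-1}$ as the second move is reduced to a sawtooth by inserting $(\Stil)^{-1}\circ\Stil$ at the peak link $L''$; your proposal never mentions this reduction) and then producing an explicit factorization of $\Stil_{r_0,\dots,r_n}^{s_1,\dots,s_n}\circ\Etil_{a,c}^{b}$ into a chain $(\Etil_{a,b}^{c'})^{-1}\circ\Wtil\circ\cdots\circ\Wtil\circ\Etil_{a,c}^{c'}$, with a carefully chosen auxiliary vertex $c'$ near $c$ (and analogously for the $\D$-then-$\Stil$ case, using $\W$ moves and two auxiliary vertices $a',c'$). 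Each move in this rerouted chain keeps $h$ strictly below the peak, and the sign and admissibility conditions have to be checked for the auxiliary triangles. None of this is produced by ``direct inspection of the finitely many pairings''; it requires constructing new moves with specific new vertices, and that construction is precisely the missing content of your proof.

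A smaller but real issue: your claim that after making supports disjoint ``applying a decomposed piece of $\mathcal{F}_2$ directly to $L$ produces some $L^*$ with $h(L^*)\le h(L)$'' is false as stated, since $\mathcal{F}_2$ acts on edges of $L''$ that need not be present in $L$. The correct statement, and the one the paper uses, is that a fresh sawtooth move on the relevant negative edge of $L$ (constructed via \Cref{lem: sawtooth} to dodge the prescribed supports) plays the role you want, and \Cref{prop: sawtooth} and \Cref{lem: sawteeth} justify swapping between different sawteeth on the same or on distinct edges.
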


\begin{proof}
The moves that increase $h$ are:
\begin{itemize}[wide, labelwidth=0pt, labelindent=0pt] 
    \item moves of type $\Stil^{-1}$;
    \item $\D$ moves that substitute two positive edges with two negative ones;
    \item $\Etil$ moves that substitute two edges (positive or negative) with four negative ones or that substitute two positive edges with two negative and two positive ones;
    \item $\Etil^{-1}$ moves that substitute four positive edges with two negative ones. This is a special case of $\Stil^{-1}$ and therefore we will not consider it separately.
\end{itemize}
As a consequence the possible chains that we have to consider are the following:
\begin{enumerate}
    \item first we apply a type $\Stil^{-1}$ move and then $\Stil$; \label{casouno}
    \item first we apply a type $\Etil$ move and then $\Stil$ (and the inverse);\label{casodue}
    \item first we apply a type $\D$ move and then $\Stil$ (and the inverse);\label{casotre}
    \item first we apply a type $\Etil$ move and then $\Etil^{-1}$;\label{casoquattro}
    \item first we apply a type $\D$ move and then $\Etil^{-1}$ (and the inverse);\label{casocinque}
    \item first we apply a type $\D$ move and then $\D^{-1}$;\label{casosei}
\end{enumerate}

By \Cref{lem: sawtooth} one can find an admissible sawtooth move $\Stil_{p_0,\ldots, p_n}^{q_1,\ldots,q_n}$ on $L''$. Hence
$$\begin{tikzcd}[column sep = large]
    L \arrow[r] & L'' \arrow[r, "\Stil_{p_0,\ldots, p_n}^{q_1,\ldots,q_n}" ] &\widetilde{L}'' \arrow[r, "(\Stil_{p_0,\ldots, p_n}^{q_1,\ldots,q_n})^{-1}" ]& L'' \arrow[r] & L' 
\end{tikzcd} $$
and $h(\widetilde{L}'') < h(L'').$
Thus, cases \ref{casoquattro}, \ref{casocinque}, \ref{casosei} can be reduced to a composition of cases \ref{casouno}, \ref{casodue}, \ref{casotre} just by composing in-between the two moves with $(\Stil_{p_0,\ldots, p_n}^{q_1,\ldots,q_n})^{-1} \circ \Stil_{p_0,\ldots, p_n}^{q_1,\ldots,q_n}$.

Hence, let us check the first three cases.

\begin{enumerate}[wide, labelwidth=0pt, labelindent=0pt]
    \item Suppose that the moves are $(\Stil_{p_0,\ldots,p_m}^{q_1,\ldots,q_m})^{-1} $ and $\Stil_{r_0,\ldots,r_n}^{s_1,\ldots,s_n} $. If the two sawtooth moves are applied on the same edges, we already showed in \Cref{prop: sawtooth} that we can go from $L$ to $L'$ only with moves of type $\Rtil, \Wtil$. 
    
    Otherwise, by \Cref{lem: sawteeth} we can find an admissible sawtooth move $\Stil_{t_0,\ldots t_k}^{u_1,\ldots,u_k}$ with $t_0t_k=r_0r_n$ the intersections $[t_j,t_{j+1},u_{j+1}] \cap [p_i,p_{i+1},q_{i+1}]$ and $[t_j,t_{j+1},u_{j+1}] \cap \tau([p_i,p_{i+1},q_{i+1}])$ are either empty or contained in $L$. In particular $(\Stil_{p_0,\ldots,p_m}^{q_1,\ldots,q_m})^{-1} $ and $ \Stil_{t_0,\ldots t_k}^{u_1,\ldots,u_k}) $ commute and $$ \Stil_{r_0,\ldots,r_n}^{s_1,\ldots,s_n} \circ (\Stil_{p_0,\ldots,p_m}^{q_1,\ldots,q_m})^{-1}= (\Stil_{r_0,\ldots,r_n}^{s_1,\ldots,s_n}\circ(\Stil_{t_0,\ldots t_k}^{u_1,\ldots,u_k})^{-1}) \circ ((\Stil_{p_0,\ldots,p_m}^{q_1,\ldots,q_m})^{-1} \circ \Stil_{t_0,\ldots t_k}^{u_1,\ldots,u_k}).$$  
    Notice that every move on the right-hand side is admissible.
    Applying first $\Stil_{t_0,\ldots t_k}^{u_1,\ldots,u_k}$ we first decrease $h$ by $2$, then if we apply $(\Stil_{p_0,\ldots,p_m}^{q_1,\ldots,q_m})^{-1}$ we increase it again by $2$. In the end, by \Cref{prop: sawtooth} we can factorize $\Stil_{r_0,\ldots,r_n}^{s_1,\ldots,s_n}\circ (\Stil_{t_0,\ldots t_k}^{u_1,\ldots,u_k})^{-1}$ into $\Wtil$ and $\Rtil$ moves, that preserve $h$. Hence at every step $h$ is less than or equal to $h(L)$ and therefore it is strictly less than $h(L'')$.
    
    \item Suppose that the two moves are $\Etil_{a,c}^b$ and $\Stil_{p_0,\ldots,p_m}^{q_1,\ldots,q_m}$ and suppose that $bc$ is negative (the case where $bc$ is positive and $ab$ is negative is analogous). 

   By \Cref{lem: sawtooth} we can find an admissible sawtooth move $\Stil_{r_0,\ldots,r_n}^{s_1,\ldots,s_n} $ on $\Etil_{a,c}^b(L)$, with $r_0r_n=bc$ such that 
    $$\Stil_{p_0,\ldots,p_m}^{q_1,\ldots,q_m} \circ  \Etil_{a,c}^b = (\Stil_{p_0,\ldots,p_m}^{q_1,\ldots,q_m} \circ (\Stil_{r_0,\ldots,r_n}^{s_1,\ldots,s_n})^{-1}) \circ (\Stil_{r_0,\ldots,r_n}^{s_1,\ldots,s_n}\circ \Etil_{a,c}^b) $$
    and $\Stil_{p_0,\ldots,p_m}^{q_1,\ldots,q_m} \circ (\Stil_{r_0,\ldots,r_n}^{s_1,\ldots,s_n})^{-1}$ falls back into Case $(1)$. Hence we will now proceed to factorize $\Stil_{r_0,\ldots,r_n}^{s_1,\ldots,s_n}\circ \Etil_{a,c}^b $.

    For every $x \in \R^3$, let $r_x$ be the line parallel to $\bx$ containing $x$.
    We claim that we can assume the triangles $[r_i,r_{i+1},s_{i+1}]$ to intersect $[a,b,c]$ and $\tau([a,b,c])$ only inside $[b,c] \cup \tau([b,c])$. In fact, as in the proof of \Cref{lem: sawtooth} we can pick $r_0,\ldots,r_n\in [b,c]$ and $s_1,\ldots, s_n$ such that $\Stil_{r_0,\ldots,r_n}^{s_1,\ldots, s_n}$ is admissible and the triangles $[r_i,r_{i+1},s_{i+1}]$ do not intersect the convex set $ \tau([a,b,c])$ outside of $\tau([b,c]) \cap [b,c] $. These triangles will be generically disjoint from $[a,b,c] \setminus [b,c]$.

    Let $c'$ be close to $c$ so that $c'c$ is positive and notice that we can pick $c'$ so that $[a,b,c']\cap [r_i,r_{i+1},s_{i+1}]$ is either empty or equal to $\{b\}$ when $i=m-1$. One has to pick $c'$ close enough to $c$ and such that its projection on $\R^2$ lies in the same half-plane as $a$ with respect to the line containing $b$ and $c$. Then (see \Cref{fig:caso2})
    $$ \Stil_{r_0,\ldots,r_n}^{s_1,\ldots,s_n} \circ \Etil_{a,c}^b= (\Etil_{a,b}^{c'})^{-1}\circ\Wtil_{c',r_1}^{a,s_1}\circ \Wtil_{c',r_2}^{r_{1},s_2}\circ\cdots \circ \Wtil_{c',r_{n-1}}^{r_{n-2},s_{n-1}}\circ \Wtil_{c',c}^{r_{n-1},s_n} \circ \Etil_{a,c}^{c'}.$$
    \begin{figure}
        \centering
        \includegraphics[width= 0.65 \textwidth]{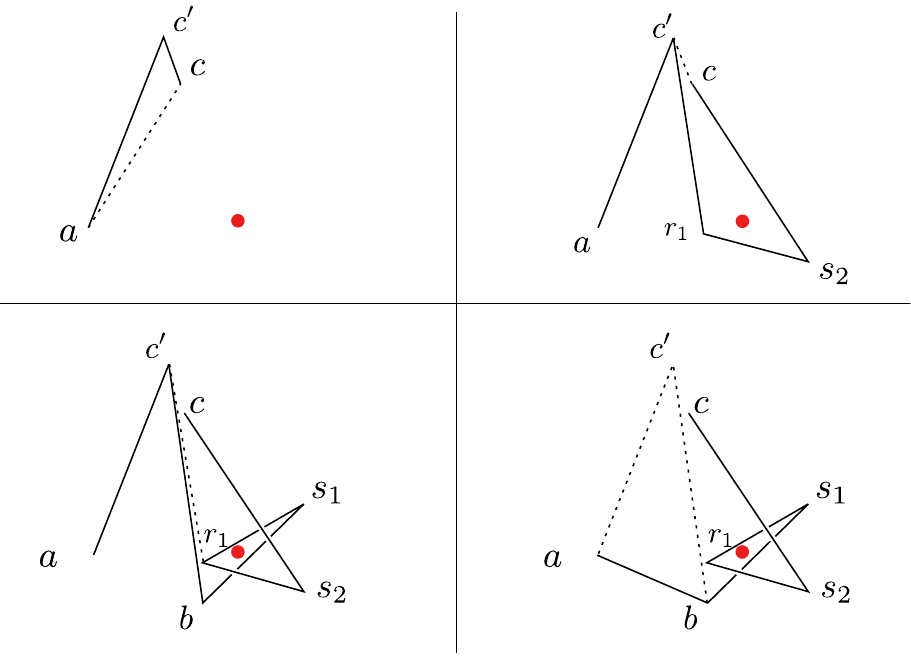}
        \caption{The factorization in Case $2$. In order to simplify the picture we only drew the effect of the sequence of moves on $[a,c]$ and not on $\tau([a,c])$.}
        \label{fig:caso2}
    \end{figure}
    Notice that if $c'$ is close enough to $c$ all the moves above are well-defined and admissible. Moreover, the move $\Etil_{a,c}^{c'} $ and all the $\Wtil$ moves do not increase $h$. 
    
    \item  
    Suppose that the two moves are $\D_{a,b}^c$ and $\Stil_{p_0,\ldots,p_m}^{q_1,\ldots,q_m}$. 
    Without loss of generality we can suppose that $[p_0,p_m]=[a,c]$. In fact, if that is not the case, by \Cref{lem: sawtooth} we can find an admissible sawtooth move $ \Stil_{t_0,\ldots,t_k}^{u_1,\ldots,u_k}$ on $[a,c]$ and $\tau([a,c])$ so that $$\Stil_{p_0,\ldots,p_m}^{q_1,\ldots,q_m} \circ  \D_{a,b}^c = (\Stil_{p_0,\ldots,p_m}^{q_1,\ldots,q_m} \circ (\Stil_{t_0,\ldots,t_k}^{u_1,\ldots,u_k})^{-1}) \circ (\Stil_{t_0,\ldots,t_k}^{u_1,\ldots,u_k}\circ \D_{a,b}^c) $$
    and $\Stil_{p_0,\ldots,p_m}^{q_1,\ldots,q_m} \circ (\Stil_{t_0,\ldots,t_k}^{u_1,\ldots,u_k})^{-1}$ falls back into Case $1$.
    
    Call $Q$ the quadrilateral with vertices $a,b,c,\tau(a)$.
    Notice that $Q\cap [p_i,p_{i+1},q_{i+1}] =[p_i,p_{i+1}]. $
    \begin{itemize}[wide, labelwidth=0pt, labelindent=0pt]
        \item Let us first suppose that $\D_{a,b}^c$ does not go through $\infty$.
    Let $a'$ and $c'$ be points close to $a$ and $c$ respectively, such that their projections in $\R^2$ belong to the image of $Q$ in $\R^2$, and such that $aa'$ and $c'c$ are positive.
    
    Then (see \Cref{fig:caso3})
    $$ \Stil_{p_0,\ldots,p_n}^{q_1,\ldots,q_n} \circ \D_{a,b}^c= (\Rtil_{q_m,c}^{c'})^{-1}\circ (\W_{c',c}^{a',b})^{-1} \circ \Wtil_{a',p_{m-1}}^{p_{m},q_{m}}\circ \Wtil_{a',p_{m-2}}^{p_{m-1},q_{m-1}}\circ \cdots \circ \Wtil_{a',p_1}^{p_2,q_2} \circ \Wtil_{a,a'}^{p_1,q_1}\circ \Rtil_{a,b}^{a'}.$$

    \begin{figure}
        \centering
        \includegraphics[width= 0.63 \textwidth]{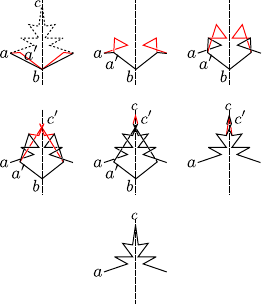}
        \caption{The factorization in Case $3$. Observe that to avoid overlaps in the picture and therefore make it more clear, not all edges have the correct sign according to the picture.}
        \label{fig:caso3}
    \end{figure}
    It is not too hard to see that $(\W_{c',c}^{a',b})^{-1}$ is well-defined and admissible since we picked $a'$ and $c'$ such that their projections in $\R^2$ belong to the image of $Q$ in $\R^2$. 
    
    Notice as well that all the other moves on the right-hand side are also well-defined and admissible for a generic choice of $a',c'$. Since $\Wtil, \W,\Rtil$ moves preserve the function $h$ this concludes the proof in this case.
 
 \item If the move $\D_{a,b}^c$ goes through $\infty$, let $\alpha$ be the plane containing $Q$. One has to notice that one can find $   \widetilde{a},\widetilde{c}$, inside $\alpha \setminus Q$  close to $a$ and $c$ respectively such that $a\widetilde{a}$ and $\widetilde{c}c$ are positive and such that $[\widetilde{a},b,c]\cup [\tau(\widetilde{a}),b,c]\supset Q$ and that $ [c,\widetilde{a},\widetilde{c}] \cap Q= \{c\}$.
 Then for a generic choice of $ a'$ and $c'$ close to $\widetilde{a}$ and $\widetilde{c}$ respectively we have that (see \Cref{fig:caso3infinito}),
 $$ \Stil_{\underline{p}}^{\underline{q}} \circ \D_{a,b}^c= (\Rtil_{q_m,c}^{c'})^{-1}\circ (\W_{c',c}^{a',b})^{-1} \circ \Wtil_{a',p_{m-1}}^{c',q_m}\circ \Wtil_{a',p_{m-2}}^{p_{m-1},q_{m-1}}\circ \cdots \circ \Wtil_{a',p_1}^{p_2,q_2} \circ \Wtil_{a,a'}^{p_1,q_1}\circ \Rtil_{a,b}^{a'},$$
 where $\underline{p}=p_0,\ldots,p_m$ and $ \underline{q}= q_1,\ldots,q_m$.
 \begin{figure}
     \centering
     \includegraphics[width= 0.65 \textwidth]{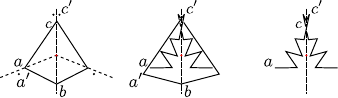}
     \caption{The factorization in Case $3$ when $\D$ goes through $\infty$. Observe that to avoid overlaps in the picture and therefore make it more clear, not all edges have the correct sign according to the picture.}
     \label{fig:caso3infinito}
 \end{figure}
 Notice that our choice of $a'$ and $c'$ implies that the moves on the right-hand side are well-defined and admissible.
 \end{itemize}
 \end{enumerate}
 This concludes the proof.
\end{proof}

We are now ready to prove the first version of Markov theorem for strongly involutive links.

\begin{figure}
    \centering
    \includegraphics[width=0.4 \textwidth]{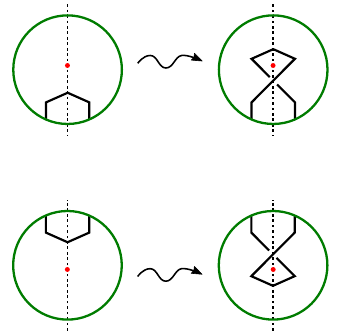}
    \caption{Stabilization on a fixed point. Also the mirror images have to be considered.}
    \label{fig:stabfix}
\end{figure}

\begin{figure}
    \centering
    \includegraphics[width=0.4 \textwidth]{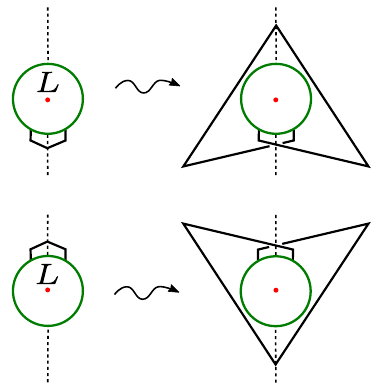}
    \caption{Stabilization on a fixed point through $\infty$. Also the mirror images have to be considered.}
    \label{fig:stabfixinfty}
\end{figure}

\begin{figure}
    \centering
    \includegraphics[width=0.4 \textwidth]{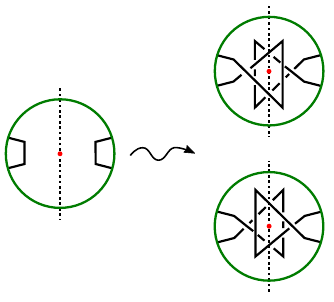}
    \caption{Double stabilization. Also the mirror images have to be considered.}
    \label{fig:stabdouble}
\end{figure}

\begin{thm} \label{thm: markovlink}
    Let $L$ and $L'$ be two strongly involutive links so that $h(L)=h(L')=0$ with respect to the braid axis $\bx$. Then $L$ and $L'$ are equivariantly isotopic if and only if they are related by a finite sequence of the following:
    \begin{itemize}
        \item equivariant isotopies in the exterior of $\bx$ that preserve $h$;
        \item stabilization on a fixed point (as described in \Cref{fig:stabfix});
        \item stabilization on a fixed point through $\infty$ (as described in \Cref{fig:stabfixinfty});
        \item double stabilization (as described in \Cref{fig:stabdouble}).
    \end{itemize}
\end{thm}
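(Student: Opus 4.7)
The plan is to assemble what Steps 1--3 above have prepared. For the $(\Leftarrow)$ direction, every one of the four listed operations is realized by a PL equivariant ambient isotopy of $S^3$ (the first being tautological, and the three stabilizations being local moves visible from Figures \ref{fig:stabfix}, \ref{fig:stabfixinfty}, \ref{fig:stabdouble}), so any composition of them is as well.

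For $(\Rightarrow)$, suppose $L$ and $L'$ are equivariantly isotopic. By \Cref{lem: equivalence} (together with the immediate corollary in \Cref{sec: firstdef} allowing $\D$ moves), they are related by a finite chain $L = L_0 \to L_1 \to \cdots \to L_N = L'$ of admissible $\Etil$ and $\D$ moves; since $\Rtil, \Rcal, \W, \Wtil, \Stil$ are by definition composites of such, the same chain can be regarded as built from the enlarged family $\{\D, \Etil, \Stil, \Rtil, \Rcal, \W, \Wtil\}$ together with inverses. To such a chain I attach the lexicographic complexity $(M,k) = (\max_i h(L_i),\, |\{i : h(L_i) = M\}|) \in \mathbb{N}^2$. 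Assuming $M > 0$, I first apply \Cref{prop: isolatemax} to every pair $L_i \to L_{i+1}$ with $h(L_i) = h(L_{i+1}) = M$, replacing each such arrow by a sub-chain whose interior links have $h < M$, thereby isolating the $M$-maxima without creating new ones; I then invoke \Cref{prop: reducemin} at every isolated index $i$ with $h(L_i) = M$ to substitute the segment $L_{i-1} \to L_i \to L_{i+1}$ by a chain lying strictly below $M$. Each round strictly decreases $(M,k)$ in lexicographic order, so in finitely many iterations I arrive at $M = \max(h(L), h(L')) = 0$, i.e.\ $h(L_i) = 0$ throughout.

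With the whole chain at $h = 0$, the only moves in the enlarged family that can connect consecutive links are $\Rtil, \Rcal, \W, \Wtil$ and their inverses, since $\D, \Etil, \Stil$ each require a negative edge at one end. It remains to match these four types with the four bullets of the theorem. By admissibility, the support of an $\Rtil$ or $\Rcal$ move is disjoint from $\bx$ and meets the link only along positive edges, so such moves are equivariant isotopies in $S^3 \setminus \bx$ preserving $h$. Comparing \Cref{fig:Wtil} with \Cref{fig:stabdouble} identifies a $\Wtil$ move, up to small $\Rtil$ adjustments, with a double stabilization; comparing \Cref{fig:Wmove} with \Cref{fig:stabfix} and \Cref{fig:stabfixinfty} shows that a $\W$ move not through $\infty$ (respectively, through $\infty$), whose support contains fixed points on $\ax$, realizes a stabilization on a fixed point (respectively, on a fixed point through $\infty$).

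The main obstacle is bookkeeping rather than conceptual: one must check that alternating applications of \Cref{prop: isolatemax} and \Cref{prop: reducemin} strictly decrease $(M,k)$ at each round so that the iteration terminates, and carefully match the local pictures of $\W$ and $\Wtil$ moves with the stabilization figures, possibly after auxiliary $\Rtil$ and $\Rcal$ adjustments.
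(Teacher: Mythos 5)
Your route is essentially the same as the paper's: use \Cref{lem: equivalence} to get a chain of $\Etil$/$\D$ moves, iterate \Cref{prop: isolatemax} and \Cref{prop: reducemin} to bring the whole chain down to $h=0$, observe that only $\Rtil, \Rcal, \W, \Wtil$ can appear there, and then read off the four bullets from these moves. Your lexicographic complexity $(M,k)$ is a clean formalization of the iteration that the paper leaves implicit (it simply says to ``apply iteratively'' the two propositions).

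There is, however, a real soft spot in the final identification. A generic admissible $\W$ or $\Wtil$ move at $h=0$ is \emph{not} recognizable as a stabilization of \Cref{fig:stabfix}, \Cref{fig:stabfixinfty} or \Cref{fig:stabdouble}, and ``small $\Rtil$ adjustments'' or ``auxiliary $\Rtil$ and $\Rcal$ adjustments'' undersells what is needed. The paper invokes \Cref{prop: empty} and, crucially, \Cref{prop: really empty} to factor an arbitrary admissible $\Wtil$ move into $\Rtil$ moves plus a \emph{really empty $\Wtil$ move of crossing type}, and an arbitrary admissible $\W$ move into $\Rcal,\Rtil$ moves plus a \emph{really empty $\W$ move of type A} (not through $\infty$) or \emph{of type B} (through $\infty$). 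It is precisely these normalized moves — really empty, with disjoint projections, and of the correct type — that match the stabilization figures; in particular the type A/B distinction, not merely whether $\D$ goes through $\infty$ at the outset, is what makes the correspondence literal, and you omit it. Also note that ``whose support contains fixed points on $\ax$'' is vacuous: every $\W$ move has $c,d\in\ax$ by definition, so it cannot serve as the distinguishing criterion. Once you replace the hand-waved adjustments by an explicit appeal to \Cref{prop: empty} and \Cref{prop: really empty} and the resulting crossing-type/type-A/type-B normal forms, your argument matches the paper's proof.
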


\begin{proof}
    As a consequence of \Cref{prop: isolatemax} and \Cref{prop: reducemin} $L$ and $L'$ are related by a sequence of $\Rcal,\Rtil,\W,\Wtil$ moves, that is to say the only moves in our set that preserve $h(L)=0$. Due to \Cref{prop: empty} and \Cref{prop: really empty} we can suppose that the $\W$ and $\Wtil$ are really empty. Moreover we can suppose the $\W$ moves that do not go through $\infty$ to be of type A, the $\W$ moves that do go through $\infty$ to be of type B and the $\Wtil$ moves to be of crossing type.

    One has to notice that $\Rcal$ and $\Rtil$ moves are equivariant isotopies performed in the exterior of $\bx$, and they preserve $h$. Moreover the really empty $\Wtil$ moves of crossing type coincide with double stabilizations as in \Cref{fig:stabdouble}, really empty $\Wtil$ moves that do not go through $\infty$ of type A are stabilizations on a fixed points as in \Cref{fig:stabfix}, while $\W$ moves through $\infty$ are stabilization on a fixed point through $\infty$ as in \Cref{fig:stabfixinfty}.
\end{proof}

\begin{rmk}
    Let $r:S^3\to S^3 $ be the diffeomorphism that, when restricted to $\R^3$ is the rotation of $\pi$ around $\bx$. Notice that $ r$ and $\tau$ commute and that $ r$ reverses the orientation on $\ax$.
    As a consequence of the proof of Proposition 2.4 in \cite{LobbWatson}, two strongly involutive links $(L,\tau)$ and $ (L',\tau)$ are Sakuma equivalent if and only if  $(L,\tau)$ and either $ (L',\tau)$ or $(r(L'),\tau )$ are equivariantly isotopic. Hence, in the hypotheses of \Cref{thm: markovlink}, $L$ and $L'$ are Sakuma equivalent if and only if $L$ and either $L'$ or $r(L')$ are related by a finite sequence of the operations described in \Cref{thm: markovlink}.
\end{rmk}

\subsection{Step 4: proof of the Equivariant Markov Theorem} \label{subsec: step4}
We now want to give an algebraic version of \Cref{thm: markovlink}. We will do this in \Cref{thm: eqmarkov}, but first we have to show how $\Rcal$ and $\Rtil$ moves act on braids.

\begin{dfn}
    Given two palindromic $n$-braids $\alpha$ and $\beta$, and an $n$-braid $\gamma$, then we say that the pairs $(\alpha, \beta)$ and $(\gamma \alpha \overline{\gamma},\overline{\gamma}^{-1}\beta \gamma^{-1} ) $ are equivariantly conjugated.
    Notice that the second is still a pair of palindromic braids.
    
\end{dfn}

\begin{rmk}
    \label{rmk: nonuniquebraids}
    Recall that by \Cref{rmk: braidclosure}, if $L$ is such that $h(L)=0$ up to equivariant isotopy that modifies the link diagram only up to planar isotopy, we can suppose that $L$
 is the equivariant closure of two palindromic braids. These braids are not unique since one can, up to equivariant planar isotopy, move some of the crossings from $\alpha$ to $\beta$.
 However all possible choices of such a pairs are easily seen to be equivariantly conjugated.
 \end{rmk}

\begin{prop} \label{prop: Rconj}
Let $L$ and $L'$ be two strongly involutive links with $h(L)=h(L')=0$. Suppose that there is a finite sequence of admissible $\mathcal{R}$ and $\widetilde{\mathcal{R}}$ move that transform $L$ into $L'$. If $L$ is equivariantly isotopic in the sense of \Cref{rmk: braidclosure} to the equivariant closure of $(\alpha, \beta)$ and $L'$ to the equivariant closure of $(\alpha' , \beta')$, then the two pairs are equivariantly conjugated.
\end{prop}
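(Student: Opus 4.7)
The plan is to reduce each $\mathcal{R}$ and $\widetilde{\mathcal{R}}$ move in the sequence to arbitrarily small moves via Lemma \ref{lem: reduceRtil} and Lemma \ref{lem: reduceR}, and then track the effect on the palindromic braid pair locally. Since equivariant conjugation is closed under composition, it suffices to verify that a single small $\mathcal{R}$ or $\widetilde{\mathcal{R}}$ move changes $(\alpha,\beta)$ only by equivariant conjugation; the full statement then follows by induction on the length of the factored sequence.

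First I would observe that $\mathcal{R}$ and $\widetilde{\mathcal{R}}$ moves preserve $h = 0$ since they replace positive edges with positive edges only. Hence every intermediate link $L_i$ in the sequence satisfies $h(L_i)=0$ and, by Remark \ref{rmk: braidclosure} together with Remark \ref{rmk: nonuniquebraids}, is equivariantly planar isotopic (with support away from $\bx$) to the equivariant closure of a pair $(\alpha_i,\beta_i)$ of palindromic braids, well-defined up to equivariant conjugation. Assuming the moves have been made small, I would split the analysis according to the location of the support with respect to the regions $B_1$, $B_2$ and the connecting arcs region (and with respect to the axis $\ax$). For a small $\widetilde{\mathcal{R}}$ move whose closed triangle $[a,b,c]$ lies entirely inside one of these regions, the move is a $\tau$-equivariant PL isotopy that leaves both $\alpha$ and $\beta$ unchanged: this follows because a $\tau$-equivariant isotopy of $\alpha$ inside $B_1$ is precisely a $\rho$-equivariant isotopy of its symmetric representative (and symmetrically in $B_2$), and Theorem \ref{thm: uniqueness} guarantees that any two such representatives give the same braid. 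When the support instead straddles the boundary between two of these regions, an equivariant planar isotopy restoring standard braid form shifts a single crossing $\sigma_i^{\pm 1}$ between the $\alpha$-block and the $\beta$-block; one checks directly that this is the equivariant conjugation by $\sigma_i^{\pm 1}$, the palindromicity of a single generator making $\overline{\sigma_i^{\pm 1}}=\sigma_i^{\pm 1}$ in the conjugation formula. Finally, a small $\mathcal{R}$ move has both vertices $b$ and $c$ on $\ax$, so it slides a connecting arc across $\ax$; a local inspection shows that this too realizes an equivariant conjugation by a short braid word determined by the two axial vertices.

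The main obstacle will be the bookkeeping in the boundary-straddling cases: one must verify that the equivariant planar isotopy used to restore the standard braid form on the output produces \emph{exactly} an equivariant conjugation, rather than a more general modification of the pair. The palindromic symmetry is essential here, as it forces the conjugator $\varepsilon$ and its reverse $\overline{\varepsilon}$ to appear in the correct symmetric positions so that the resulting pair is again a pair of palindromic braids. Once this local case analysis is assembled, the proposition follows immediately.
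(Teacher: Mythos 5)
Your proposal takes essentially the same route as the paper: reduce each $\Rcal$ and $\Rtil$ move to arbitrarily small ones via Lemma~\ref{lem: reduceRtil} and Lemma~\ref{lem: reduceR}, then argue each small move is an equivariant conjugation, relying on Remark~\ref{rmk: nonuniquebraids} to absorb the auxiliary planar isotopies. The paper obtains a cleaner case split than your four-region decomposition by working in the $(x,y)$-projection and taking the projected $x$-axis (which separates the $\alpha$-strands from the $\beta$-strands) as the \emph{only} relevant boundary: a preliminary equivariant planar isotopy (itself an equivariant conjugation) pushes the small support entirely into one half-plane, after which the move merely rewrites the palindromic word for one of $\alpha$, $\beta$ --- in particular the boundary-straddling bookkeeping you flag as the main obstacle never arises.
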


\begin{proof}
Let $\pi:\R^3\to \R^2$ be the orthogonal projection on the $(x,y)$-plane. 
One can suppose up to a small equivariant planar isotopy that no vertex of $L$ projects to the $x$-axis, and that there are no crossings on the $x$-axis. From this projection there is a canonical choice of palindromic braids $\alpha$ and $\beta$ such that $L = \widehat{\alpha\beta}$. One should think of $\alpha$ as the palindromic braid made of the strands that project via $\pi$ to the half-plane $\{y\le 0\}$, and of $\beta$ as the one made of the strands that project via $\pi$ to the half-plane $\{y\ge 0\}.$
\begin{itemize}[wide, labelwidth=0pt, labelindent=0pt]
    \item Let us deal with the case of the move $\Rtil_{a,c}^b$. 

    By \Cref{lem: reduceRtil}, up to equivariant planar isotopy we can suppose that $\pi([a,b,c])$ is completely contained in one of the two half-planes, let us suppose $\{y \le 0\}$, the other case being symmetrical.
    By \Cref{rmk: nonuniquebraids}, we know that this planar isotopy can change the braids $(\alpha, \beta)$ by equivariant conjugation.
        Now $\Rtil_{a,c}^b$ acts only on $\alpha$, changing its representation as a palindromic braid.
     For an example, see \Cref{fig:braidrelations}. This concludes the proof in this case.

     \begin{figure}
         \centering
         \includegraphics[width=0.75 \textwidth]{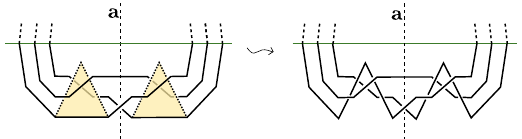}
         \caption{A move $\Rtil$ acting on $\alpha$, where the green horizontal line represents the $x$-axis. Of course the braid on the left and the one on the right are equal, since they are isotopic relative to their boundary.}
         \label{fig:braidrelations}
     \end{figure}

\item Consider now the move $\Rcal_{a,c}^b$. By \Cref{lem: reduceR} and the previous case, we can suppose that the quadrilateral $Q$ with vertices $a,b,c, \tau(b)$ is small. Since $Q \cap \bx = \emptyset$, we can suppose that $\pi(Q)$ does not intersect the $x$-axis, and is therefore contained in one of the two half-spaces, suppose $ \{y \le 0\}$. 
Now $\Rcal_{a,c}^b$ acts only on $\alpha$, changing its representation as a palindromic braid.
\begin{figure}
    \centering
    \includegraphics[width=0.4 \textwidth]{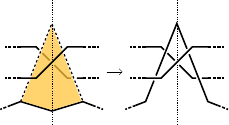}
    \caption{A move $\Rcal$ acting on $\alpha$. Of course the braid on the left and the one on the right are equal, since they are isotopic relative to their boundary.}
    \label{fig:Rcross}
\end{figure}
\end{itemize}
This concludes the proof.
\end{proof}

Now that we identified $\Rcal$ and $\Rtil$ moves with equivariant conjugation on pairs of palindromic braids, we are left with determining how the various types of stabilizations act on the braids.

Let $B_n$ denote the group of $n$-braids. Consider the two different inclusions of $B_n$ in $B_{n+1}$:
\begin{enumerate}
    \item $E_n: B_n \ni \sigma_i \mapsto \sigma_i \in B_{n+1};$
    \item $S_n: B_n \ni \sigma_i \mapsto \sigma_{i+1}$;
\end{enumerate}
where the $\sigma_i$'s are the standard generators of the Artin braid group. To simplify notation, let us call $S_n$ just $S$ and $E_n$ just $E$ for every $n$.

\begin{dfn}\label{dfn: Wstab}
Let $(\alpha,\beta)$ be a pair of palindromic braids, such that $\alpha = x \Delta \overline{x}$ and $\beta= y \Delta' \overline{y}$ as observed in \Cref{rmk: palindromicity}. Suppose that $ \Delta =\sigma_{i_1}^{\varepsilon_1} \cdots \sigma_{i_n}^{\varepsilon_n}$ and $ \Delta' =\sigma_{j_1}^{\eta_1} \cdots \sigma_{j_k}^{\eta_k}$.
    \begin{itemize}[wide, labelwidth=0pt, labelindent=0pt]
        \item If $ \sigma_1 \notin \{ \sigma_{i_1},\cdots, \sigma_{i_n}\}$ (respectively $ \sigma_1 \notin \{ \sigma_{j_1},\cdots, \sigma_{j_k}\})$, we call \emph{stabilization of type S on a fixed point} the operation that substitutes $(\alpha,\beta) $ with $(S(x)S(\Delta)\sigma_1^{\pm 1} S(\overline{x}), S(\beta))$ (respectively $(S(\alpha), S(y)S(\Delta') \sigma_{1}^{\pm 1} S(\overline{y}))$). 

        \item If $ \sigma_{n} \notin \{ \sigma_{i_1},\cdots, \sigma_{i_n}\}$ (respectively $ \sigma_n \notin \{ \sigma_{j_1},\cdots, \sigma_{j_k}\})$, we call \emph{stabilization of type E on a fixed point} the operation that substitutes $(\alpha,\beta) $ with $(E(x)E(\Delta)\sigma_{n+1}^{\pm 1} E(\overline{x}), E(\beta))$ (respectively $(E(\alpha), E(y)E(\Delta') \sigma_{n+1}^{\pm 1} E(\overline{y}))$). 

        \item We call \emph{double stabilization} the operation that substitutes the pair $(\alpha,\beta) $ with the pair $(S^2(x) S^2(\Delta) \sigma_1^{\varepsilon} S^2(\overline{x}), \sigma_2^\eta S^2(y) S^2(\Delta) \sigma_1^{-\varepsilon} S^2(\overline{x})\sigma_2^\eta) $
        where $\varepsilon, \eta = \pm 1$ (see \Cref{fig:doubledest}).
    \end{itemize}
\end{dfn}

\begin{figure}
    \centering
    \includegraphics[width=0.55 \textwidth]{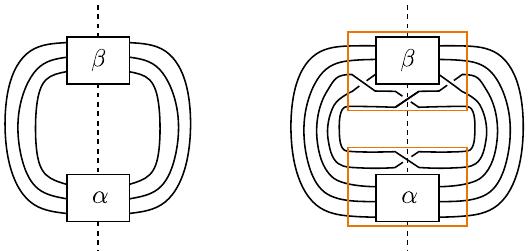}
    \caption{Double stabilization on two palindromic braids. Notice that we drew a smooth picture instead of a PL one only to make it more clear.}
    \label{fig:doubledest}
\end{figure}

Notice that all these operations preserve the palindromicity of the two braids of the pair.

\begin{rmk}\label{rmk:stabilizations}
  Notice that a stabilization on a fixed point as in \Cref{fig:stabfix}, acts on $(\alpha,\beta)$ exactly as a stabilization of type S on a fixed point. 
  
  A stabilization on a fixed point through $\infty$ as in \Cref{fig:stabfixinfty}, acts on $(\alpha,\beta)$ as a stabilization of type E on a fixed point.

  Furthermore, a double stabilization as in \Cref{fig:stabdouble} acts on $(\alpha,\beta)$ as a double stabilization as defined in \Cref{dfn: Wstab} (see \Cref{fig:equaldoublestab}). 
\end{rmk}

\begin{figure}
    \centering
    \includegraphics[width= 0.4 \textwidth]{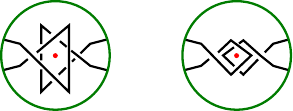}
    \caption{Notice that the double stabilization shown in \Cref{fig:stabdouble} (left) and the stabilization defined in \Cref{dfn: Wstab} (right) act on the same way on the braids.}
    \label{fig:equaldoublestab}
\end{figure}

As a consequence of \Cref{prop: Rconj} and \Cref{rmk:stabilizations} we can give an algebraic version of \Cref{thm: markovlink}.

\begin{thm}[Equivariant Markov Theorem]\label{thm: eqmarkov}
    Suppose that $\alpha,\beta$ are palindromic $n$-braids and $\gamma,\delta$ are palindromic $m$-braids. Suppose that  $(\widehat{\alpha\beta},\tau)$ and $(\widehat{\gamma\delta},\tau) $ are equivariantly isotopic. Then the pair $(\gamma,\delta)$ can be obtained by the pair $(\alpha,\beta)$ by a finite sequence of the following operations:
    \begin{itemize}
        \item equivariant conjugation;
        \item (de-)stabilization of type $S$ on a fixed point;
        \item (de-)stabilization of type $E$ on a fixed point;
        \item double stabilization. 
    \end{itemize}
\end{thm}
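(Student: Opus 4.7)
The plan is to reduce the algebraic statement to the geometric one already proved in \Cref{thm: markovlink} and then translate each geometric move into its algebraic counterpart on pairs of palindromic braids. The assumption that $(\widehat{\alpha\beta},\tau)$ and $(\widehat{\gamma\delta},\tau)$ arise as equivariant closures guarantees that both representative diagrams can be chosen with $h=0$ (they are in equivariant braid position with respect to $\bx$). Applying \Cref{thm: markovlink} to these two diagrams yields a finite sequence of intermediate strongly involutive links, each in equivariant braid position, related by equivariant isotopies in the exterior of $\bx$ that preserve $h$, by stabilizations on a fixed point, by stabilizations on a fixed point through $\infty$, and by double stabilizations.

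Next I would translate each of these geometric operations into an operation on pairs of palindromic braids. For the equivariant isotopies in the exterior of $\bx$ that preserve $h$, one first invokes the proof of \Cref{prop: isolatemax} (more precisely, the results of Step 2) to decompose the isotopy into admissible $\Rcal$ and $\Rtil$ moves. \Cref{prop: Rconj} then identifies this composition on the braid side with a finite sequence of equivariant conjugations. For each of the three geometric stabilization moves, \Cref{rmk:stabilizations} directly matches them with the three algebraic (de-)stabilizations defined in \Cref{dfn: Wstab}: stabilization on a fixed point corresponds to a type $S$ stabilization, stabilization on a fixed point through $\infty$ corresponds to a type $E$ stabilization, and the double stabilization is the same on both sides. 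Assembling these translations along the geometric sequence produced by \Cref{thm: markovlink} yields the desired algebraic sequence from $(\alpha,\beta)$ to $(\gamma,\delta)$.

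One subtlety I would have to address is that \Cref{rmk: nonuniquebraids} warns that reading off a pair of palindromic braids from a strongly involutive link with $h=0$ is not canonical: different choices of diagram (equivalently, different choices of which crossings end up in the ``left'' versus the ``right'' symmetric box) give pairs that differ by equivariant conjugation. Consequently, between any two consecutive geometric moves in the sequence I may need to insert an equivariant conjugation to bring both sides of the translation into a common form; this is harmless because equivariant conjugation is already one of the four admissible operations. The only true obstacle is the careful bookkeeping at the points where an $\Rcal$ or $\Rtil$ move crosses the $x$-axis: \Cref{prop: Rconj} handles this by shrinking the support of each move so that it lies entirely in one of the two half-planes defining the $\alpha$ and $\beta$ halves, so the algebraic translation is unambiguous up to the conjugation already accounted for.

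Finally, a converse check (stabilizations and equivariant conjugation produce equivariantly isotopic closures) is immediate from inspection of the pictures in Figures \ref{fig:eqconjugateintro}--\ref{fig:stabdoubleintro}, so only the forward direction stated here requires argument. With the above translation dictionary in place, the theorem follows from \Cref{thm: markovlink} by a straightforward concatenation.
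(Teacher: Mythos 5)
Your proposal follows the same route the paper intends: invoke \Cref{thm: markovlink} to obtain a geometric sequence of moves between two $h=0$ diagrams, then translate $\Rcal,\Rtil$ moves to equivariant conjugation via \Cref{prop: Rconj}, translate the three stabilization types via \Cref{rmk:stabilizations}, and absorb the non-canonicity of reading off a braid pair (\Cref{rmk: nonuniquebraids}) into additional equivariant conjugations. One small imprecision: you cite the proof of \Cref{prop: isolatemax} and ``the results of Step 2'' as what decomposes an equivariant isotopy preserving $h$ into $\Rcal,\Rtil$ moves, but that is not what those results do (Step 2 factors $\Rcal,\Rtil,\W,\Wtil$ moves into smaller/special ones of the \emph{same} type; \Cref{prop: isolatemax} isolates maxima of $h$). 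The correct source is the \emph{proof} of \Cref{thm: markovlink} itself, which produces the sequence directly as a composition of $\Rcal,\Rtil,\W,\Wtil$ moves — the item ``equivariant isotopies in the exterior of $\bx$ that preserve $h$'' in its statement is already shorthand for the $\Rcal,\Rtil$ portion of that sequence, so no separate re-decomposition is needed (and a literal reading would also admit $\W,\Wtil$ moves, which preserve $h$ too). With that citation corrected, the argument is the paper's argument.
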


\begin{rmk}
It is a straightforward consequence of Proposition 2.4 in \cite{LobbWatson} and \Cref{thm: eqmarkov} that if $(\widehat{\alpha\beta},\tau) $ and $(\widehat{\alpha'\beta'},\tau) $ are Sakuma equivalent, then either $(\alpha, \beta)$ and $(\alpha',\beta')$ or $(\beta, \alpha)$ and $(\alpha',\beta')$ are related by the operations in \Cref{thm: eqmarkov}.
\end{rmk}
\newpage

\appendix 

\section{Equivalence of smooth and piecewise linear strongly involutive links} \label{app: equiv}
In this appendix we want to show that smooth strongly involutive links up to \emph{smooth Sakuma equivalence} are actually equivalent to PL strongly invertible links up to \emph{PL Sakuma equivalence}. In order to do that we will start by showing that smooth strongly involutive links up to \emph{smooth equivariant isotopy} are actually equivalent to PL strongly invertible links up to \emph{PL equivariant isotopy}.

As a consequence of the results in this appendix and \cite[Proposition 2.4]{LobbWatson}, we have that two Sakuma equivalent strongly involutive PL links are actually equivariantly isotopic, up to a global move that reverses the orientation on the axis.

Moreover, these results will imply that all the results in this paper apply to smooth strongly involutive links as well.

Suppose that $\mathcal{L}$ is a smooth strongly involutive link, with involution $\tau$ the $\pi$-rotation around a fixed axis in $\R^3$ that fixes $\infty$. Take a sufficiently small $\varepsilon>0$, so that $V_\varepsilon(\mathcal{L})$, the $\varepsilon$-neighbourhood of $\mathcal{L}$, is a regular neighbourhood of $\mathcal{L}$. It is not hard to see that $V_\varepsilon(\mathcal{L})$ is $\tau$-invariant. We can approximate $\mathcal{L}$ via a PL link $L$ inside $V_\varepsilon(\mathcal{L})$, so that the vertices of $L$ lie on $\mathcal{L}$ and the projection of $V_\varepsilon(\mathcal{L})$ onto $\mathcal{L}$ sends $L$ homeomorphically to $\mathcal{L}$. It is not hard to show that we can take this approximation to be $\tau$-equivariant, and that any such approximation is equivariantly isotopic. We call $\operatorname{PL}(\mathcal{L})$ the class of $L$ up to equivariant isotopy. 

\begin{prop}[Equivalence of smooth and PL strongly involutive links up to equivariant isotopy] \label{prop: eq iso smooth pl}
The operator $\operatorname{PL}$, that assigns to every smooth strongly involutive link the equivariant isotopy class of a PL approximation induces a well-defined isomorphism between the set of smooth strongly involutive links up to smooth equivariant isotopy to the set of PL strongly involutive links up to PL equivariant isotopy.
\end{prop}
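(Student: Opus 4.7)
The plan is to verify, in order, four things: well-definedness of $\operatorname{PL}$ on a fixed smooth strongly involutive link, descent to smooth equivariant isotopy classes, surjectivity of the induced map, and injectivity.

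For well-definedness on a single $\mathcal{L}$, given two equivariant PL approximations $L$ and $L'$ of $\mathcal{L}$ inside a $\tau$-invariant tubular neighborhood $V_\varepsilon(\mathcal{L})$, I would first take an equivariant common refinement of their vertex sets (adding simultaneously each new vertex and its $\tau$-image). The refined polygons then share the same vertices on $\mathcal{L}$ but possibly with different edges; corresponding edges of $L$ and $L'$ bound a thin disk inside $V_\varepsilon(\mathcal{L})$ that can be equivariantly triangulated, and the resulting sequence of triangles gives a sequence of admissible $\widetilde{\mathcal{E}}$ moves (and $\mathcal{E}^b_{a,\tau(a)}$ moves for edges incident to vertices on $\mathbf{a}$) realizing an equivariant PL isotopy from $L$ to $L'$. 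For descent to classes, if $\mathcal{L}_0$ and $\mathcal{L}_1$ are smoothly equivariantly isotopic via $H\colon S^3\times I\to S^3$, I would partition $I$ into subintervals $[t_i,t_{i+1}]$ so small that consecutive images $H_{t_i}(\mathcal{L}_0)$ and $H_{t_{i+1}}(\mathcal{L}_0)$ lie inside a common $\tau$-invariant tubular neighborhood, and choose compatible equivariant PL approximations at each time; the previous paragraph then connects consecutive approximations by equivariant PL isotopies whose concatenation is the required PL isotopy.

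For surjectivity, given a PL strongly involutive link $L$, I would equivariantly smooth its corners to produce $\mathcal{L}$ with $\operatorname{PL}(\mathcal{L})=[L]$. At a vertex $v\notin\mathbf{a}$, pick a ball $B_v$ small enough that $B_v\cap\tau(B_v)=\emptyset$, apply a standard corner-rounding supported in $B_v$, and transport it by $\tau$ to $\tau(B_v)$; equivariance is automatic. At a vertex $v$ lying on $\mathbf{a}$ (recall the convention that all points of $L\cap\mathbf{a}$ are vertices), choose a $\tau$-invariant ball $B_v$ and round the corner using a $\tau$-equivariant bump function; this is possible because the two edges of $L$ incident to $v$ are exchanged by $\tau$ with their prolongations past $v$, so the local model is $\tau$-symmetric. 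The result is a smooth $\tau$-invariant link admitting $L$ as an equivariant PL approximation.

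For injectivity, suppose $\operatorname{PL}(\mathcal{L}_0)$ and $\operatorname{PL}(\mathcal{L}_1)$ agree as equivariant PL isotopy classes. By Lemma 2.4 the PL equivariant isotopy between approximations $L_0$ and $L_1$ factors as a finite sequence of admissible $\widetilde{\mathcal{E}}$ and $\mathcal{E}^b_{a,\tau(a)}$ moves; each such move extends to a $\tau$-equivariant PL ambient isotopy supported in a $\tau$-invariant neighborhood of its (possibly $\tau$-symmetric) triangle, and can be equivariantly smoothed using the same local constructions as in surjectivity. Concatenating these smoothed ambient isotopies and pre/post-composing with the equivariant smoothings of the endpoints produces a smooth equivariant isotopy from $\mathcal{L}_0$ to $\mathcal{L}_1$. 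The main obstacle throughout is the behavior near the fixed-point set $\mathbf{a}$: ordinary non-equivariant smoothing does not respect $\tau$, so the technical heart is constructing the $\tau$-invariant local models (balls and bump functions) that make corner-rounding compatible with the involution along $\mathbf{a}$. Once this equivariant smoothing lemma is in place, the rest of the argument is a straightforward transfer between the smooth and PL categories in dimension three.
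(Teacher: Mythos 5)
Your proposal is correct and follows essentially the same route as the paper's proof: descent via partitioning the smooth isotopy into small time-steps through $\tau$-invariant regular neighbourhoods, and injectivity by invoking the equivariant triangle-move characterization of PL equivariant isotopy (\Cref{lem: equivalence}) and observing that the triangles give rise to a smooth equivariant isotopy. You flesh out two points the paper declares ``clear'' or defers to the preamble — the well-definedness of $\operatorname{PL}(\mathcal{L})$ on a single link (via common equivariant refinement) and surjectivity (via $\tau$-equivariant corner-rounding, with separate local models on and off the axis $\mathbf{a}$) — but these are elaborations of, not departures from, the paper's argument.
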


\begin{proof}
Let $\mathcal{L}$ and $\mathcal{L'}$ be two smooth strongly involutive links that are equivariantly isotopic via the isotopy $H: S^3 \times [0,1] \to S^3$. Up to subdividing the isotopy in a concatentation of isotopies, we can always suppose that there exists a point $p$ in the fixed point set of $\tau$ that does not belong to $H(\mathcal{L}, [0,1])$. Let us suppose without loss of generality that that point is $\infty$. 
By compactness, there exists $\varepsilon>0$ such that the $\varepsilon$-neighbourhood of $\mathcal{L}_t\coloneqq H(\mathcal{L}, t)$ is a regular neighbourhood of $\mathcal{L}_t$ for every $t \in [0,1]$. 

Let $0=t_0 < t_1<\ldots<t_n =1$ such that for $i=0,\ldots,n-1$ and $t\in [t_i, t_{i+1}]$, $\mathcal{L}_t \subset V_\varepsilon(\mathcal{L}_{t_{i}})$ and $\mathcal{L}_t$ is transverse to the fibers of the projection of $ V_\varepsilon(\mathcal{L}_{t_{i}})$ to $\mathcal{L}_{t_{i}} $. Then one can take good enough PL approximations $L_i$ and $L_{i+1}$ of $ \mathcal{L}_{t_{i}}$ an $\mathcal{L}_{t_{i+1}}$ respectively so that $L_i$ and $L_{i+1}$ are contained in $V_\varepsilon(\mathcal{L}_{t_{i}})$ and project homeomorphically to $\mathcal{L}_{t_{i}}$. 

In this setting, it is easy to see that $L_i$ and $L_{i+1}$ are PL equivariantly isotopic. Hence, since $\mathcal{L}= \mathcal{L}_{t_0} $ and $ \mathcal{L'}= \mathcal{L}_{t_0}$, one shows that $\operatorname{PL}(\mathcal{L})$ equals $\operatorname{PL}(\mathcal{L'})$, hence $\operatorname{PL}$ induces a well-defined map between the set of smooth strongly involutive links up to smooth equivariant isotopy to the set of PL strongly involutive links up to PL equivariant isotopy.

This map is clearly surjective, while to discuss injectivity one just needs to realise that if $L$ and $L'$, PL approximations of $\mathcal{L}$ and $\mathcal{L'}$ respectively, are related by admissible equivariant triangle moves, then $\mathcal{L}$ and $\mathcal{L'}$ are clearly smoothly equivariant isotopic, since we can use the triangles of the moves to perform an isotopy. This, together with \Cref{lem: equivalence}, concludes the proof.
\end{proof}

As a consequence of \cite[Proposition 2.4]{LobbWatson}
and \Cref{prop: eq iso smooth pl}, the operator PL induces a well-defined surjective map from the set of smooth strongly involutive links up to smooth Sakuma equivalence and the set of PL strongly involutive links up to PL Sakuma equivalence. The following lemma will allow us to show the injectivity of this induced map.

\begin{lem} \label{lem: smooth approx}
Let $f:S^3 \to S^3$ be a PL homeomorphism that commutes with $\tau$. Then, if we fix a Riemannian metric on $S^3$ so that $\tau$ is an isometry, for every $\varepsilon>0$, there exists a smooth diffeomorphism $\widetilde{f}:S^3 \to S^3$ that commutes with $\tau$ and such that for every $x \in S^3$, $d(f(x),\widetilde{f}(x))< \varepsilon$, where $d$ is the distance induced by the Riemannian metric.
\end{lem}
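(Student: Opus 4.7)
My plan is to carry out a $\tau$-equivariant version of the standard PL-to-smooth approximation by convolution. The classical fact that every PL homeomorphism of $S^3$ can be $C^0$-approximated by smooth diffeomorphisms (Cairns--Munkres--Whitehead) rests on convolving the PL map against a smooth bump function in local charts; the idea is to perform this convolution in a way that preserves the $\tau$-symmetry throughout.

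Concretely, I would first choose a finite cover of $S^3$ by $\tau$-invariant open sets equipped with $\tau$-equivariant smooth charts in which $\tau$ acts linearly --- around points of the fixed axis $\ax$ as a $\pi$-rotation in $\R^3$, and away from $\ax$ as the swap of two disjoint Euclidean balls, using the equivariant tubular neighborhood theorem for the smooth $\mathbb{Z}/2$-action. I would fix a $\tau$-invariant smooth partition of unity subordinate to this cover, convolve $f$ in each chart against a $\tau$-invariant smooth bump function of small support $\delta$, and glue the local smoothings by the partition of unity, producing a global smooth map $\widetilde{f}: S^3 \to S^3$. By construction $\widetilde{f}$ commutes with $\tau$, and the standard convolution estimate yields $d(f(x), \widetilde{f}(x)) < \varepsilon$ for $\delta$ chosen small enough. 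As a fallback argument, one can first produce a non-equivariant smoothing $g$ and then symmetrize via the Riemannian midpoint $\widetilde{f}(x) \coloneqq \mathrm{mid}(g(x), \tau g \tau(x))$, which is well-defined and $\tau$-equivariant whenever $\varepsilon$ is below the injectivity radius of $S^3$, because $\tau$ is an isometry and hence sends geodesics to geodesics and midpoints to midpoints.

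The principal obstacle is verifying that $\widetilde{f}$ is a diffeomorphism rather than merely a smooth map: a priori the convolution (or the averaging) could introduce critical points where affine pieces of $f$ with quite different derivatives are mixed together. The key observation is that since $f$ is a PL homeomorphism, on each top simplex of a triangulation making $f$ simplicial the affine derivative has determinant of the same fixed sign with magnitude bounded below by a constant depending only on $f$; the convolution of $f$ at any point is a convex combination of finitely many such affine pieces, and a convex combination of invertible linear maps whose determinants have the same sign and uniformly bounded modulus is itself invertible. This shows that $\widetilde{f}$ is an immersion for $\delta$ sufficiently small, and combined with $C^0$-closeness to the homeomorphism $f$ (a local-to-global covering and degree argument), this forces $\widetilde{f}$ to be a diffeomorphism and hence completes the proof.
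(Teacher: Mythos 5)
Your approach is genuinely different from the paper's, but it has a real gap at its core, so I will focus on that.

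The central difficulty in this lemma is not equivariance per se but making sure the equivariant smoothing is a \emph{diffeomorphism}. Your proposal handles equivariance at the cost of losing control over invertibility. The concrete error is the claim that ``a convex combination of invertible linear maps whose determinants have the same sign and uniformly bounded modulus is itself invertible.'' This is false. In $\R^3$, take $A=\operatorname{diag}(1,1,1)$ and $B=\operatorname{diag}(-1,-1,1)$: both have determinant $1$, but $\tfrac12(A+B)=\operatorname{diag}(0,0,1)$ is singular. So the convolution/averaging step can produce a map that is not even an immersion, and your ``local-to-global degree argument'' cannot rescue this, since $C^0$-closeness to a homeomorphism does not imply local injectivity of a smooth map. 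For the same reason the fallback via Riemannian midpoints $\widetilde f(x)=\operatorname{mid}(g(x),\tau g\tau(x))$ is not obviously a diffeomorphism either: $g$ and $\tau g\tau$ are only $C^0$-close to $f$, and since $f$ is PL (hence not $C^1$), there is no reason for $Dg(x)$ and $D(\tau g\tau)(x)$ to be close, so the midpoint map's derivative can degenerate. Relatedly, the smoothing theorem of Cairns--Munkres--Whitehead is \emph{not} proved by chartwise convolution with a bump function --- it is a layer-by-layer smoothing along skeleta precisely because the naive convolution fails to produce a diffeomorphism --- so the primary plan of an ``equivariant convolution'' does not reduce to the cited classical result.

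The paper sidesteps all of this by never averaging: it applies Munkres once to get a (non-equivariant) smooth diffeomorphism $h$ close to $f$, observes that $\tau'\coloneqq h\tau h^{-1}$ is a smooth involution $C^0$-close to $\tau$, and then constructs a small smooth diffeomorphism $g$ (close to the identity) with $\tau=g\tau'g^{-1}$ by matching up disks bounded by the fixed-point unknots of $\tau$ and $\tau'$ and extending equivariantly over the complementary balls. Then $\widetilde f\coloneqq g\circ h$ is automatically a diffeomorphism (a composition of diffeomorphisms) and commutes with $\tau$ because $\widetilde f\tau=g\,h\tau=g\,\tau'h=\tau\,g\,h=\tau\widetilde f$. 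Conjugation preserves the diffeomorphism property for free, which is exactly what averaging does not. If you want to salvage your proposal, you would need to replace the symmetrization step with something of this conjugation flavor; the averaging route would require genuine new estimates on derivatives that the PL hypothesis does not supply.
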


\begin{proof}
    By \cite{Munkres} (see Theorem 6.3), for every $\delta>0$, $f$ can be $\delta$-approximated by a smooth diffeomorphism $h:S^3 \to S^3$. Let $\tau'$ be the involution $ h \circ \tau \circ h^{-1}$. Notice that for all $x \in S^3$, $d(\tau(x), \tau'(x))< 2 \delta$. In fact, by the triangular inequality, $$d(\tau(x), \tau'(x))\le d(\tau(x), f \circ \tau \circ h^{-1}(x))+ d(f \circ \tau \circ h^{-1}(x), \tau'(x)) $$ and, since $h$ approximates $f$, and $\tau$ commutes with $f$ and is an isometry, one sees that $ d(\tau(x), f \circ \tau \circ h^{-1}(x))$ is equal to $d(h \circ h^{-1}(x), f \circ  h^{-1}(x))$ that is less than $\delta$ and that $d(f \circ \tau \circ h^{-1}(x), \tau'(x))$ is also less than $ \delta$. 
    
    Let $U$ denote the fixed point set of $\tau$, and $D$ a disk in $S^3$ such that $\tau(D) \cap D= \partial D= U$. The unknot $h(U)$ is the fixed point set of $\tau'$, and, since $f$ fixes $U$ setwise, $h(U)$ is contained in a $ \delta$-neighbourhood of $U$. 
    
    Notice that $h \circ f^{-1}$ is a $\delta$-approximation of the identity map, i.e. for every $x \in S^3$,  $ d(x, h \circ f^{-1}(x))< \delta$. The boundary of $h\circ f^{-1}(D)$ is $h \circ f^{-1}(U)=h(U)$, since $f^{-1}$ commutes with $\tau$. Let $D'$ be a smooth disk $\delta$-approximating the disk $ h\circ f^{-1}(D)$, relative to the boundary, i.e. there exists a homeomorphism $g':D' \to h\circ f^{-1}(D) $ such that $d(x,g'(x)) < \delta$. Then there exists a diffeomorphism $g:D' \to D$, approximating the homeomorphism $ f \circ h^{-1}\circ g':D' \to D$, such that $d(g(x),x) < 3 \delta$. One can see that if $D'$ is picked close enough to $ h\circ f^{-1}(D)$, then $D' \cap \tau'(D')= \partial D'$.
    
    Hence, the smooth diffeomorphism $g$ can be extended uniquely to a smooth diffeomorphism $D' \cup \tau' (D') \to D \cup \tau (D)$ that we still denote by $g$, such that $g(\tau'(x))=\tau(g(x))$.  Now $D' \cup \tau' (D') $ and $D \cup \tau(D)$ are $2$-spheres that separate $S^3$ in two balls that are exchanged by $\tau'$ and $\tau$ respectively. 
    
    One can extend $g$ on one of the two balls smoothly, so that $ g(x)$ is distant from $x$ at most $3 \delta$, and then uniquely extend it to the whole of $S^3$ so that $ g\circ\tau=\tau\circ g$. Now $g$ is a $3\delta$-approximation of the identity map on $S^3$, and $ \tau= g \circ \tau' \circ g^{-1}$. 
    
    To conclude one just has to notice that $\widetilde{f} \coloneqq g \circ h$ commutes with $\tau$ and is a $4 \delta$-approximation of $f$. For $\delta$ sufficiently small, such that $ 4\delta<\varepsilon$, this concludes the proof. 
\end{proof}

\begin{cor}[Equivalence of smooth and PL strongly involutive links up to Sakuma equivalence] \label{prop: Sak smooth pl}
The operator $\operatorname{PL}$ induces a well-defined isomorphism between the set of smooth strongly involutive links up to smooth Sakuma equivalence to the set of PL strongly involutive links up to PL Sakuma equivalence.
\end{cor}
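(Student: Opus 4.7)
The plan is to leverage Proposition \ref{prop: eq iso smooth pl}, which already gives the bijection at the level of equivariant isotopy, and upgrade it to Sakuma equivalence using two ingredients: the characterization of Sakuma equivalence (in either category) as equivariant isotopy up to composing with the $\pi$-rotation $r$ around $\bx$ that commutes with $\tau$ but reverses the orientation of $\ax$, provided by the smooth Proposition 2.4 of \cite{LobbWatson}; and \Cref{lem: smooth approx}, which produces $\tau$-equivariant smooth approximations of $\tau$-equivariant PL homeomorphisms.

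To show well-definedness, I will take two smoothly Sakuma equivalent links $\mathcal{L}$ and $\mathcal{L}'$. After applying the Smith conjecture, I can assume they share the involution $\tau$ and are related by a smooth diffeomorphism $f$ commuting with $\tau$. Invoking \cite[Proposition 2.4]{LobbWatson} in the smooth category (where it is proved), $\mathcal{L}$ is smoothly equivariantly isotopic to either $\mathcal{L}'$ or $r(\mathcal{L}')$, according to whether $f$ preserves or reverses the orientation of $\ax$. Since $r$ is smooth as well as PL, and commutes with the construction of $\operatorname{PL}(\cdot)$, \Cref{prop: eq iso smooth pl} converts this into a PL equivariant isotopy between $\operatorname{PL}(\mathcal{L})$ and either $\operatorname{PL}(\mathcal{L}')$ or $r(\operatorname{PL}(\mathcal{L}'))$. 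In either case $\operatorname{PL}(\mathcal{L})$ and $\operatorname{PL}(\mathcal{L}')$ are PL Sakuma equivalent. Surjectivity is immediate from \Cref{prop: eq iso smooth pl}.

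For injectivity, I will suppose that $\operatorname{PL}(\mathcal{L})$ and $\operatorname{PL}(\mathcal{L}')$ are PL Sakuma equivalent via a PL homeomorphism $f$ commuting with $\tau$ (again harmlessly assuming the involutions coincide). Fixing a $\tau$-invariant Riemannian metric on $S^3$ and a regular equivariant tubular neighborhood $V_\varepsilon(\mathcal{L}')$ of $\mathcal{L}'$ inside which the normal projection is well-defined, I will apply \Cref{lem: smooth approx} with a sufficiently small approximation parameter to obtain a $\tau$-equivariant smooth diffeomorphism $\widetilde{f}$ uniformly close to $f$. Because $f(\operatorname{PL}(\mathcal{L}))=\operatorname{PL}(\mathcal{L}')$ already sits inside $V_\varepsilon(\mathcal{L}')$, uniform proximity of $\widetilde{f}$ to $f$ ensures that $\widetilde{f}(\mathcal{L})$ lies inside $V_\varepsilon(\mathcal{L}')$ and is transverse to the normal fibers. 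The straight-line isotopy along these fibers is then smooth and $\tau$-equivariant, and connects $\widetilde{f}(\mathcal{L})$ to $\mathcal{L}'$. Composing with $\widetilde{f}$ yields the desired smooth Sakuma equivalence between $\mathcal{L}$ and $\mathcal{L}'$.

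The main technical obstacle is the quantitative control needed in the last paragraph: the smoothing parameter must be chosen small enough, uniformly over $\mathcal{L}$, so that $\widetilde{f}(\mathcal{L})$ remains inside an equivariant tubular neighborhood of $\mathcal{L}'$ on which the normal projection gives a smooth $\tau$-equivariant deformation retract. This is routine given the compactness of $\mathcal{L}$ and of $S^3$, but it is the one place where the topological and metric hypotheses interact. Everything else reduces to combining the two already-established results with the simple observation that $r$ intertwines the smooth and PL approximation constructions.
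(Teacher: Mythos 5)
Your proposal is correct and follows the same route as the paper: well-definedness and surjectivity come from \Cref{prop: eq iso smooth pl} together with the passage between Sakuma equivalence and equivariant isotopy (via \cite[Proposition 2.4]{LobbWatson} and the rotation $r$), and injectivity is obtained by smoothing the PL Sakuma equivalence with \Cref{lem: smooth approx} and running a normal-fiber isotopy inside an equivariant tubular neighbourhood of $\mathcal{L}'$. The only point worth flagging is that the quantitative step actually requires control over two sources of error simultaneously, the replacement of $f$ by $\widetilde f$ and the replacement of $\mathcal{L}$ by its PL approximation $L$; the paper handles this implicitly (uniform continuity of $f$ on compact $S^3$ plus freedom in choosing the approximation parameter), and you do acknowledge it, so this is a matter of exposition rather than a gap.
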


\begin{proof}
    The map PL is clearly surjective. To show injectivity, one has to notice that if $L$ and $L'$ are PL-approximations of $\mathcal{L}$ and $\mathcal{L}'$ respectively, and there exists a $\tau$-equivariant PL homeomorphism $f:S^3 \to S^3$ mapping $L$ to $L'$, then for $\varepsilon>0$ sufficiently small, the map $\widetilde{f}$ from \Cref{lem: smooth approx} sends $\mathcal{L}$ in an equivariant tubular neighbourhood of $ \mathcal{L}'$, and the two links are equivariantly isotopic.
\end{proof}

\section{Table of some strongly invertible knots in equivariant braid closure form} \label{app: examples}
We provide a short list of examples of low crossing number strongly invertible knots in equivariant braid closure form. The strong involutions are given by rotation around
a vertical axis. Mirrors are not included in the tables, since they can be obtained by just inverting every crossing. Notice that when a knot has more than one non-equivalent strong inversion we distinguish the different representatives by adding a letter to the name of the knot (e.g. $5_2a$ and $5_2b$).

\vspace{0.5cm}
\begin{center}
\begin{tabular}{|c|c|c|}
\hline
Knot    & Diagram & Pair of palindromic braids \\
 \hline
 $3_1$ & \begin{minipage}{0.21\textwidth} \centering
 \vspace{0.7mm}
     \includegraphics[width=0.97 \textwidth]{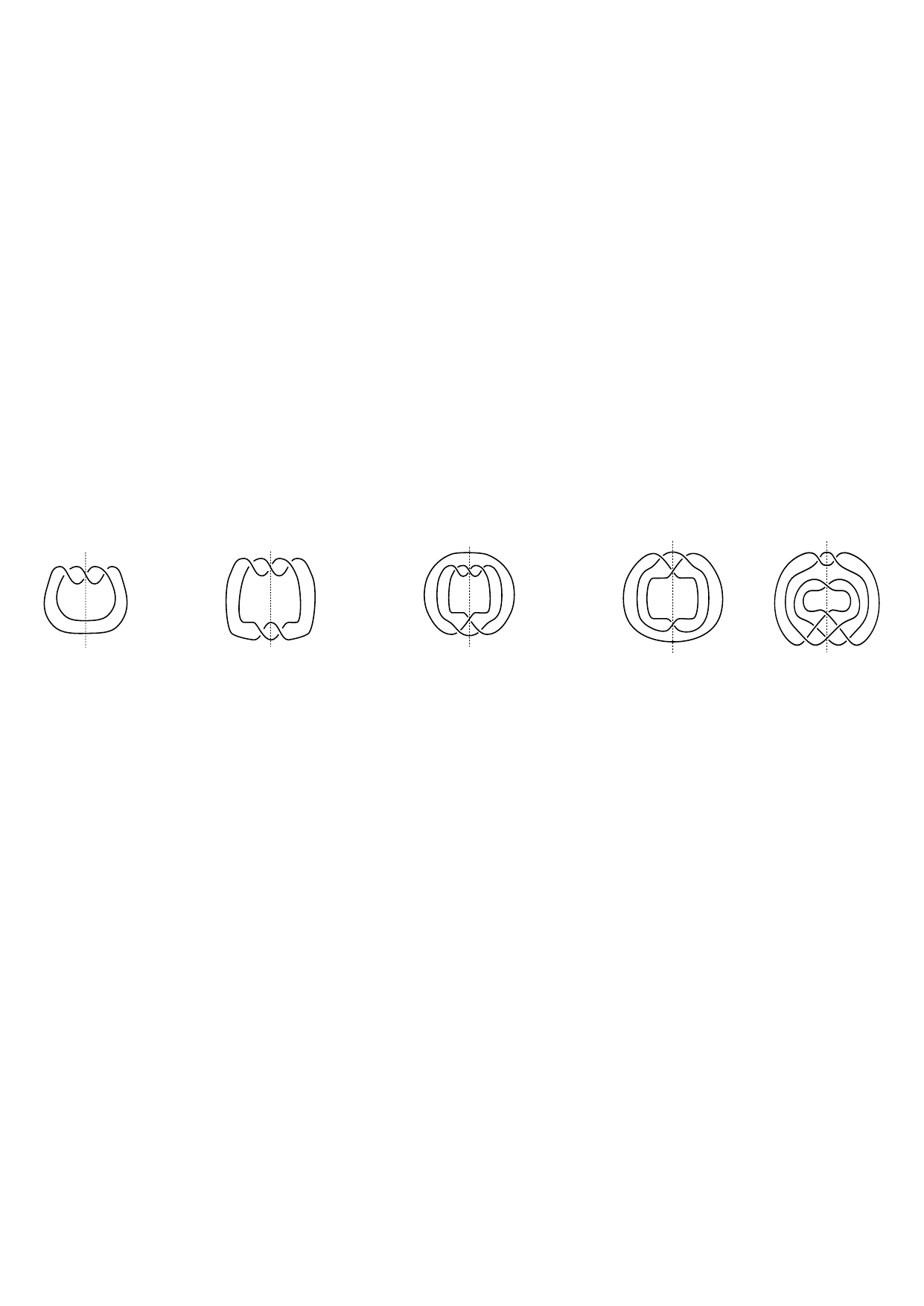}
 \end{minipage}    &  $(\id, \sigma_1^3)$\\
 \hline 
 $4_1$ & \begin{minipage}{0.21\textwidth} \centering \vspace{0.7mm}
     \includegraphics[width=0.97 \textwidth]{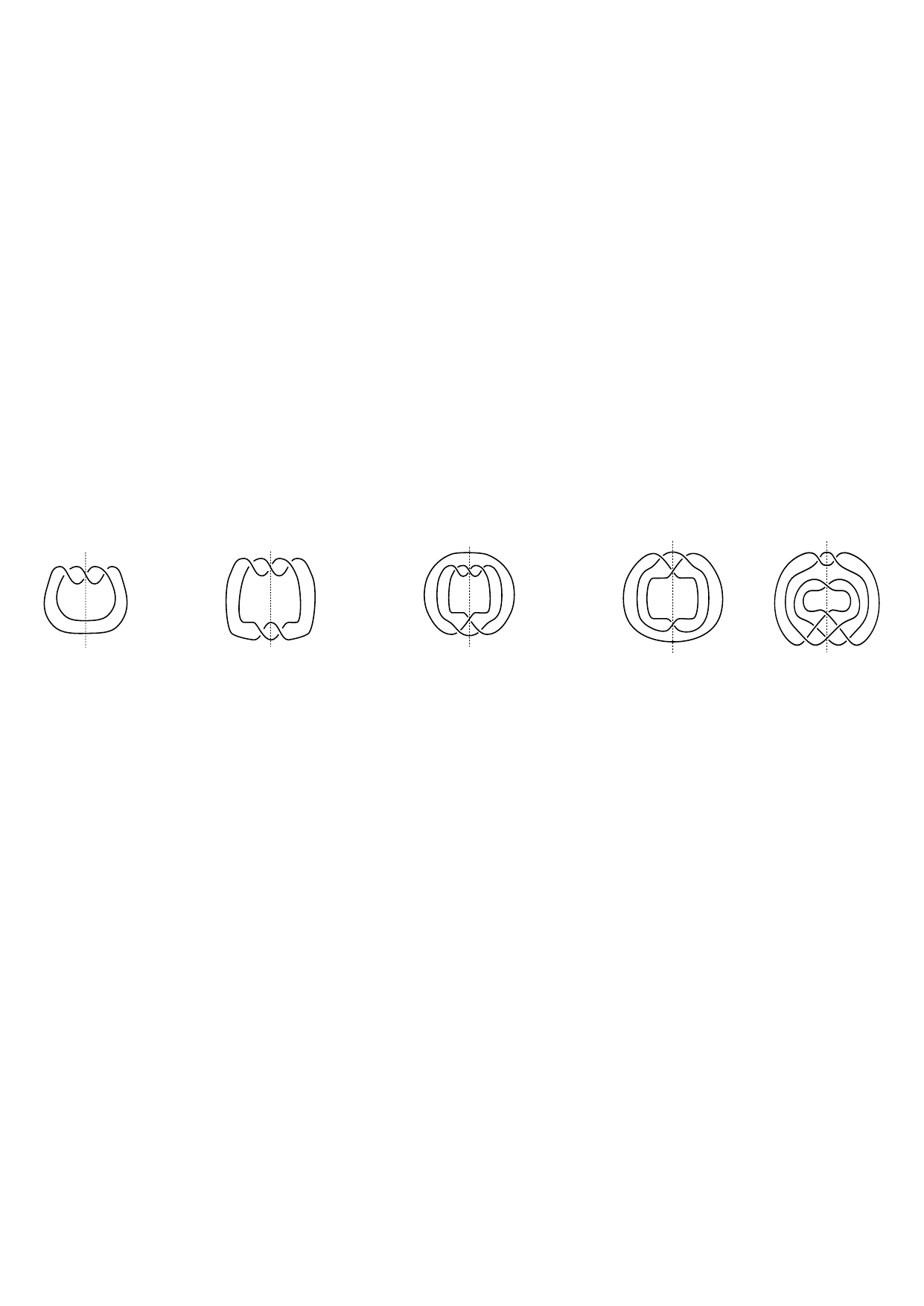} 
     \end{minipage}&  $(\sigma_1^{-1}, \sigma_2\sigma_1^{-1}\sigma_2)$ \\
 \hline
 $5_1$ &\begin{minipage}{0.21\textwidth} \centering \vspace{0.7mm}
     \includegraphics[width=0.9 \textwidth]{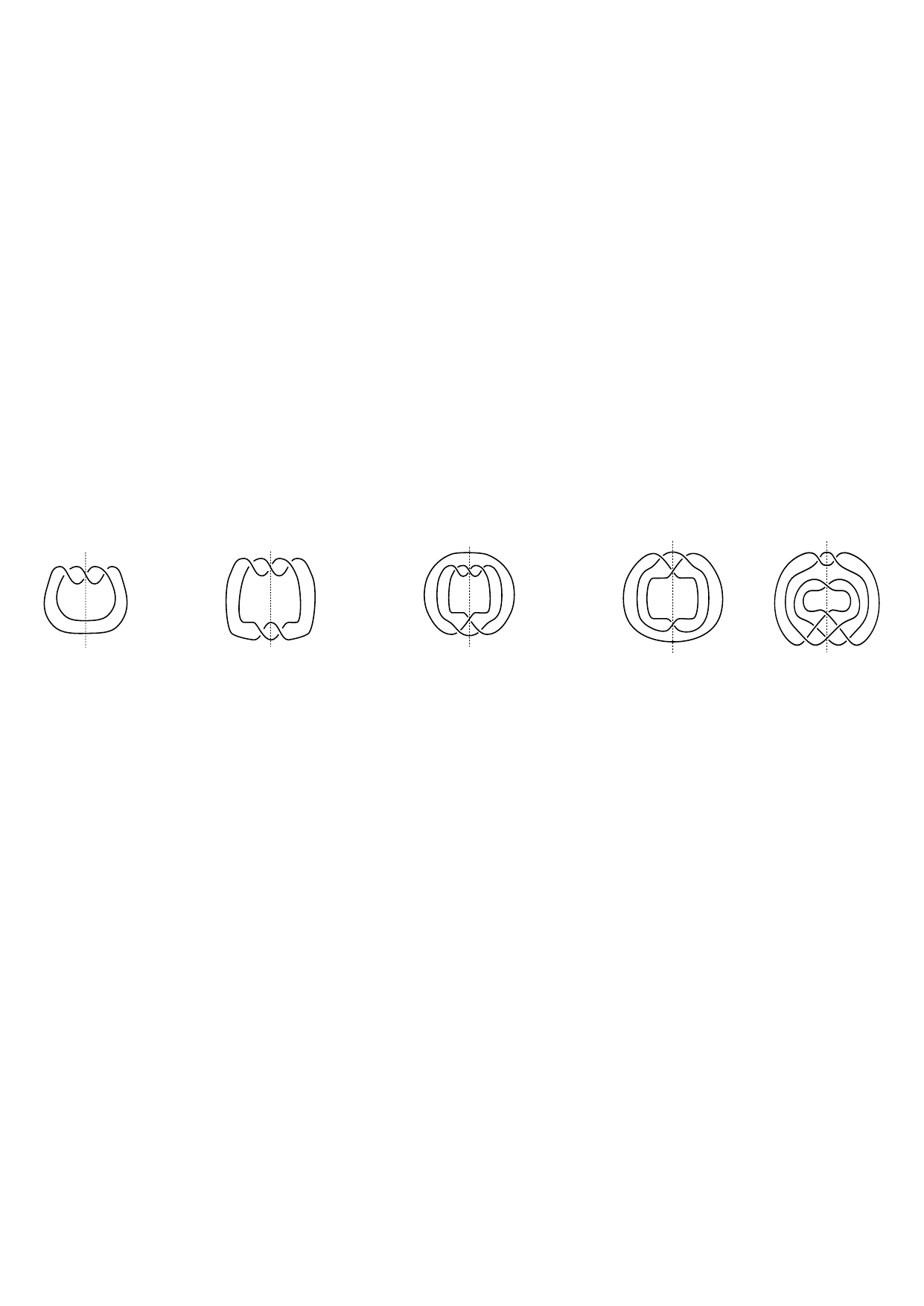} 
     \end{minipage} & $(\sigma_1^2,\sigma_1^3)$\\
 \hline

 $5_2a$ &\begin{minipage}{0.21\textwidth} \centering \vspace{0.7mm}
     \includegraphics[width=0.97 \textwidth]{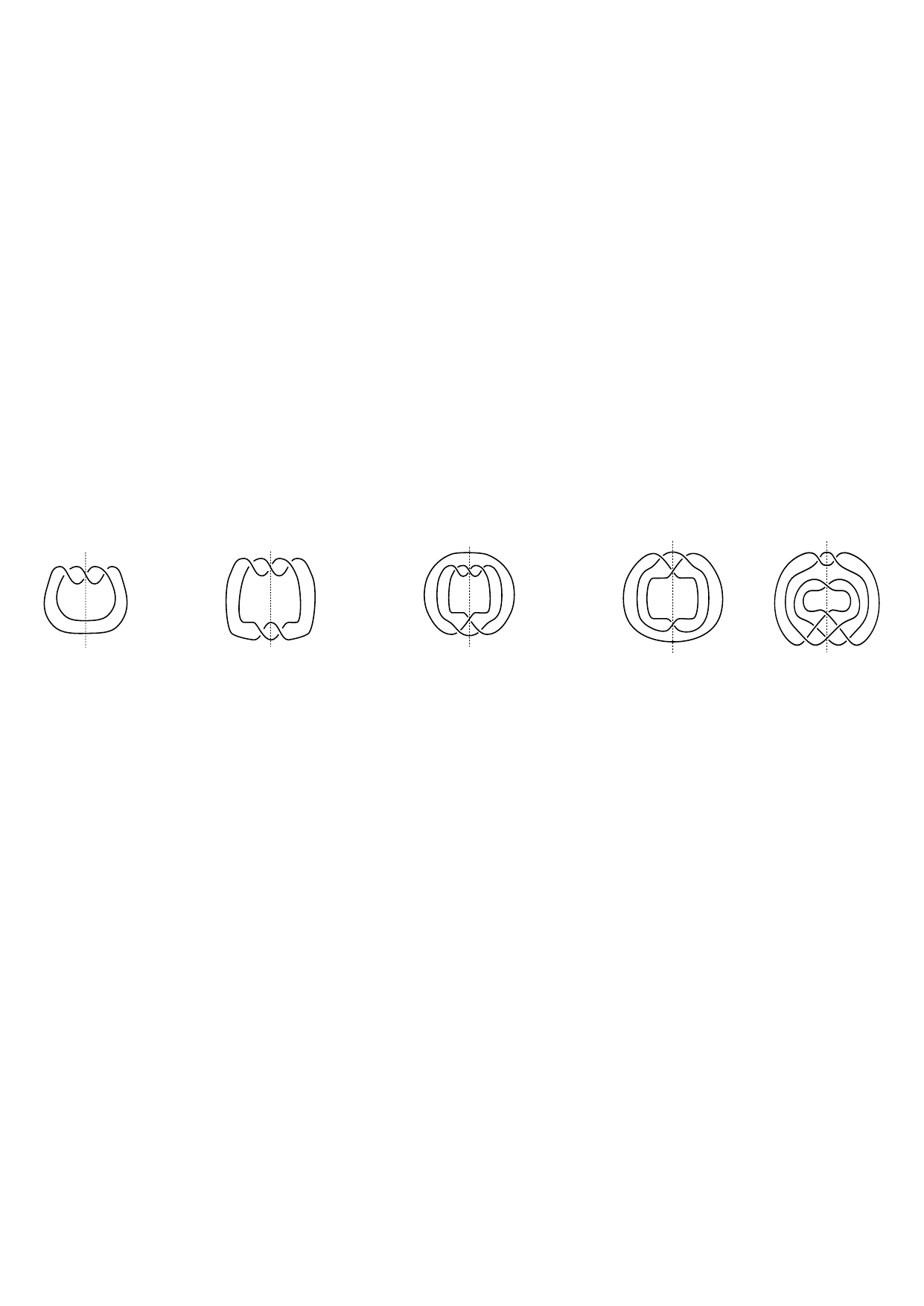} 
     \end{minipage} & $(\sigma_3\sigma_2^{-1}\sigma_1\sigma_3^{-1}\sigma_2^{-1}\sigma_3, \sigma_3\sigma_1\sigma_3 )$\\
 \hline
  $5_2b$ & \begin{minipage}{0.21\textwidth} \centering \vspace{0.7mm}
     \includegraphics[width=0.97 \textwidth]{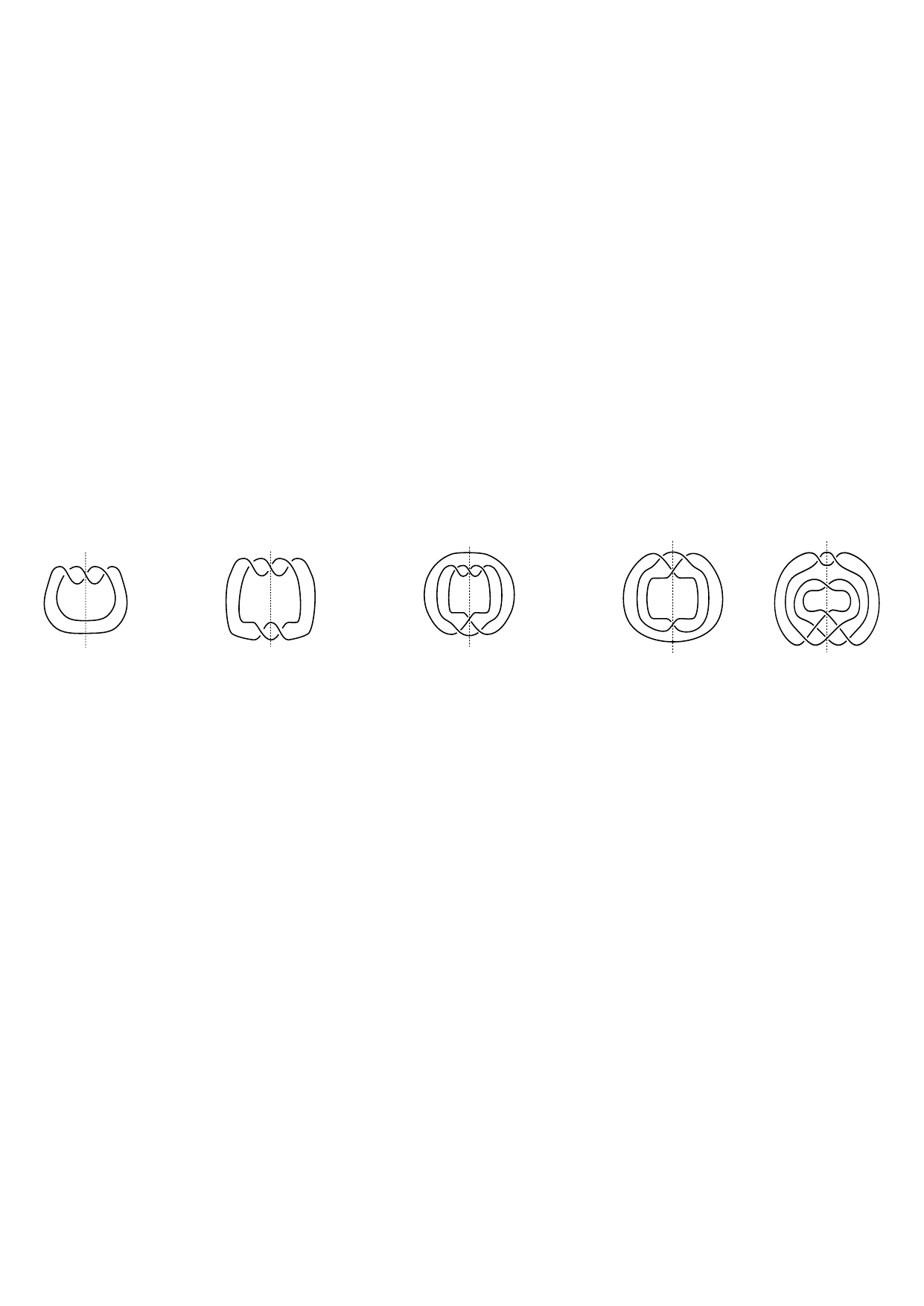} 
     \end{minipage}& $(\sigma_2\sigma_1^{-1}\sigma_2, \sigma_1^{3})$ \\
     \hline
      \end{tabular} 
      \newpage
\begin{tabular}{|c|c|c|}
\hline
Knot    & Diagram & Pair of palindromic braids \\
 \hline
     $6_1a$ & \begin{minipage}{0.27\textwidth} \centering \vspace{0.7mm}
     \includegraphics[width=0.97 \textwidth]{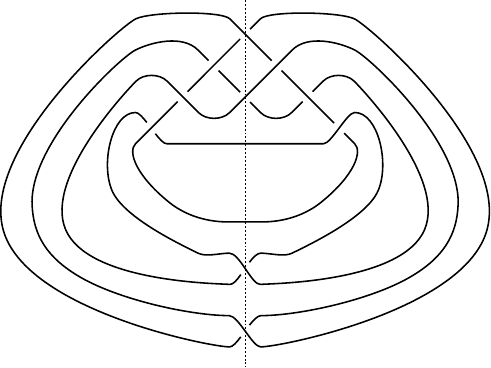} 
     \end{minipage}& $(\sigma_2\sigma_4,\sigma_1^{-1}\sigma_2\sigma_3^{-1}\sigma_2^{-1}\sigma_4\sigma_2^{-1}\sigma_3^{-1}\sigma_2\sigma_1^{-1})$\\
     \hline
     $6_1b$ & \begin{minipage}{0.25\textwidth} \centering \vspace{0.7mm}
     \includegraphics[width=0.97 \textwidth]{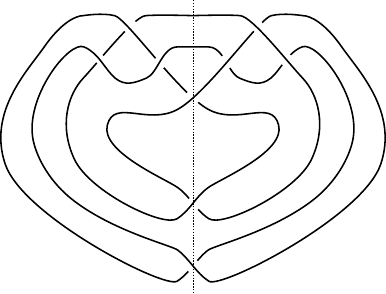} 
     \end{minipage}& $(\sigma_1^{-1}\sigma_3,\sigma_2\sigma_3\sigma_2^{-1}\sigma_1^{-1}\sigma_2^{-1}\sigma_3\sigma_2)$ \\
     \hline
     $6_2a$ & \begin{minipage}{0.25\textwidth} \centering \vspace{0.7mm}
     \includegraphics[width=0.97 \textwidth]{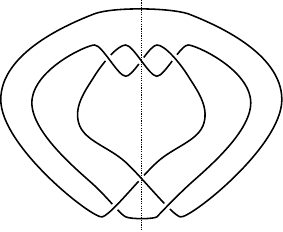} 
     \end{minipage}& $(\sigma_2^{-1}\sigma_1\sigma_2^{-1}, \sigma_1^3)$  \\
     \hline
     $6_2b$ & \begin{minipage}{0.25\textwidth} \centering \vspace{0.7mm}
     \includegraphics[width=0.97 \textwidth]{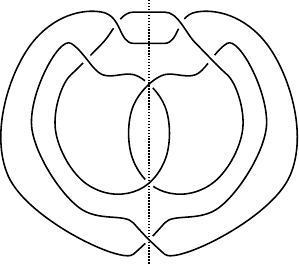} 
     \end{minipage}& $(\sigma_1^{-1}\sigma_3,\sigma_2\sigma_3\sigma_1\sigma_3\sigma_2)$ \\
     \hline
     $6_3$ & \begin{minipage}{0.25\textwidth} \centering \vspace{0.7mm}
     \includegraphics[width=0.97 \textwidth]{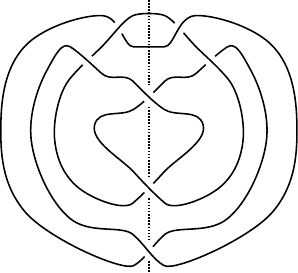} 
     \end{minipage}& $(\sigma_1\sigma_3^{-1}, \sigma_2\sigma_3^{-1}\sigma_1\sigma_3^{-1}\sigma_2)$ \\
     \hline
\end{tabular}
\end{center}

\bibliographystyle{alpha}
\bibliography{bibliografia}

\end{document}